    \newcommand{\BA}{{\mathbb {A}}} 
    \newcommand{\BC}{{\mathbb {C}}} 
    \newcommand{\BG}{{\mathbb {G}}}
    \newcommand{\BQ}{{\mathbb {Q}}} \newcommand{\BR}{{\mathbb {R}}}
     \newcommand{\BZ}{{\mathbb {Z}}}
     \newcommand{\CH}{{\mathcal {H}}}
     \newcommand{\CJ}{{\mathcal {J}}}
     \newcommand{\CN}{{\mathcal {N}}}
    \newcommand{\CO}{{\mathcal {O}}} 
    \newcommand{\CS}{{\mathcal {S}}} \newcommand{\CT}{{\mathcal {T}}}
    \newcommand{\RM}{{\mathrm {M}}} \newcommand{\RN}{{\mathrm {N}}}
    \newcommand{\RO}{{\mathrm {O}}}
    \newcommand{\RU}{{\mathrm {U}}}
    \newcommand{\fg}{{\mathfrak{g}}} \newcommand{\fh}{{\mathfrak{h}}}
     \newcommand{\fl}{{\mathfrak{l}}}
    \newcommand{\fs}{{\mathfrak{s}}}
    \newcommand{\R}{{\mathbb {R}}} 
        \newcommand{\Q}{{\mathbb {Q}}} 
            \newcommand{\Z}{{\mathbb {Z}}} 
                    \newcommand{\C}{{\mathbb {C}}}
    \newcommand{\Ad}{{\mathrm{Ad}}}
    \newcommand{\Aut}{{\mathrm{Aut}}}
      \newcommand{\Id}{{\mathrm{Id}}}
    \newcommand{\Br}{{\mathrm{Br}}}
    \newcommand{\Cl}{{\mathrm{Cl}}}
    \newcommand{\diag}{{\mathrm{diag}}}
     \newcommand{\fin}{{\mathrm{fin}}}
    \newcommand{\Gal}{{\mathrm{Gal}}} \newcommand{\GL}{{\mathrm{GL}}}
    \newcommand{\Hom}{{\mathrm{Hom}}}
    \renewcommand{\Im}{{\mathrm{Im}}}
    \newcommand{\Ind}{{\mathrm{Ind}}}
    \newcommand{\inv}{{\mathrm{inv}}}
     \newcommand{\rank}{{\mathrm{rank}}}
    \newcommand{\PGL}{{\mathrm{PGL}}} \newcommand{\Pic}{\mathrm{Pic}}
    \renewcommand{\Re}{{\mathrm{Re}}}
    \newcommand{\Res}{{\mathrm{Res}}}
    \newcommand{\st}{{\mathrm{st}}}
    \newcommand{\SL}{{\mathrm{SL}}}
    \newcommand{\Spec}{{\mathrm{Spec}}} 
    \newcommand{\SO}{{\mathrm{SO}}}
    \newcommand{\St}{{\mathrm{St}}}\newcommand{\SU}{{\mathrm{SU}}}
    \newcommand{\Sym}{{\mathrm{Sym}}}\newcommand{\sgn}{{\mathrm{sgn}}}
    \newcommand{\Stab}{{\mathrm{Stab}}}
    \newcommand{\tr}{{\mathrm{tr}}}
    \newcommand{\Vol}{{\mathrm{Vol}}}
 \renewcommand{\tilde}{\widetilde}
\newcommand{\matrixx}[4]{\begin{pmatrix}
#1 & #2 \\ #3 & #4
\end{pmatrix} }        
    \font \cyr=wncyr10
    \newcommand{\Sha}{\hbox{\cyr X}}\newcommand{\wt}{\widetilde}
    \newcommand{\wh}{\widehat}
    \newcommand{\norm}[1]{\|{#1}\|} 
    \newcommand{\ov}{\overline}
    \newcommand{\lra}{\longrightarrow}
    \newcommand{\ra}{\rightarrow} 
    \newcommand{\bs}{\backslash}
    \theoremstyle{plain}
    \newtheorem{thm}{Theorem}[section] \newtheorem{cor}[thm]{Corollary}
    \newtheorem{lem}[thm]{Lemma}  \newtheorem{prop}[thm]{Proposition}
\theoremstyle{remark} \newtheorem{remark}[thm]{Remark}
\theoremstyle{remark} 
\theoremstyle{remark} \newtheorem{example}[thm]{Example}
\theoremstyle{remark} 
\theoremstyle{remark} \newtheorem{assumption}[thm]{Assumption}
    \newcommand{\etale}{\'{e}tale~}
    \newcommand{\et}{\'{e}t}
    \renewcommand{\et}{{\text{\'{e}t}}}
    \numberwithin{equation}{section}
    \newcommand{\Gm}{\BG_\mathrm{m}}
     \newcommand{\Tam}{{\mathrm{Tam}}}
     \newcommand{\A}{{\mathbb {A}}}
      \newcommand{\tors}{{\mathrm{tors}}}
   \newcommand{\normm}[1]{\left\lvert#1\right\rvert} 
\subjclass[2020]{Primary:11R54,11F72; Secondary: 37A17,14G05}     
\keywords{
Integral points,
asymptotic counting,
orders in number fields,
zeta functions of orders,
(twisted) orbital integrals,
endoscopy theory,  fundamental lemma,
equi-distribution of lattice orbits
}
\begin{document}

\title{Distributions of Integral Points and Dedekind Zeta Values}

\author{Li Cai} 
\address[L.C.]{Academy for Multidisciplinary Studies, Beijing National Center for Applied Mathematics, Capital Normal University, Beijing, 100048, People's Republic of China
} 
\email{caili@cnu.edu.cn}

\author{Taiwang  Deng}
\address[T.D.]{Yanqi Lake Beijing Institute of Mathematical Sciences and Applications (BIMSA), Huairou District, 100084, Beijing\\
China}
\email{dengtaiw@bimsa.cn}

\begin{abstract}
Let $\CO$ be the ring of integers for some number field $F$. Let $\chi(x)\in \CO[x]$ be a regular monic polynomial of degree $n$. We study the asymptotic count of integral $n\times n$ matrices over $\CO$ with the characteristic polynomial $\chi$ and bounded archimedean norm. Previous works establish such an asymptotic with a positive leading constant. Our main result determines this constant  in terms of the leading Laurent coefficients at $s=1$ of Dedekind zeta functions attached to orders in  $F[x]/(\chi(x))$.  
The proof combines a refinement of the equi-distribution property of  orbits with a reformulation of the counting problem in terms of generalized $\kappa$-orbital integrals. These orbital integrals are then transferred by the endoscopic fundamental lemma and related to zeta functions of orders.
\end{abstract}

\maketitle

\tableofcontents

\section{Introduction}

Let $F$ be a number field with the ring of integers $\CO$. Let $X$ be an affine variety over $\CO$. For each $\CT > 0$, consider the counting function
\[N(X,\CT) = \#\Big\{ x \in X(\CO) \Big| ||x|| \leq \CT\Big\}\]
where $||\cdot||$ is a certain norm on $X(F)$.

It is a basic problem to determine the asymptotic behavior of
$N(X,\CT)$ as $\CT \ra \infty$ (see \cite{Sar91}).

If there are sufficiently many additive terms in the equations defining $X$, then one may consider
the Hardy-Littlewood method. One obtains that (see \cite{Sch85}) 
\[\lim_{\CT \ra +\infty} \frac{N(X,\CT)}{C\CT^d} = 1\]
for some natural number $d$ and some positive number $C$. The constant $C$ is a product of local densities.

When $X$ is a homogeneous space, there are more results. 

By applying the spectral analysis on the automorphic space, the work of 
Duke-Rudnick-Sarnak \cite{DRS93} considers the counting problem for the case $X$ is a symmetric space. 

Unlike the cases where the Hardy-Littlewood circle method is applicable, the leading coefficient cannot be 
written as a product of local integrals in general. This phenomenon was first systematically studied in the work of Borovoi-Rudnick \cite{BR95}. 

In this paper, we shall focus on a special case of homogeneous spaces. Let $\chi(x) \in \CO[x]$ be a monic polynomial of degree $n$. Denote by $X$ the variety
which represents the set of $n \times n$ matrices  with characteristic polynomial  $\chi(x)$. 
In other words, for any $\CO$-algebra $R$,
\[X(R) = \left\{ A \in M_{n \times n}(R) \Big| \det(xI_n - A) = \chi(x) \right\}.\]
We consider the following norm on $M_{n \times n}(F)$
\[||x|| = \max_{v|\infty} ||x||_v, \quad ||x||_v = \left( \sum_{i,j}^n |x_{ij}|_v^{2/e_v} \right)^{1/2}.\]
Here, we normalize the absolute value $|\cdot|_v$ on $F_v$ such that if $F_v = \BR$, then it is the usual one and if $F_v = \BC$, then
 it is the square of the usual one. We denote by $e_v = 1$ if $F_v = \BR$
 and $e_v = 2$ if $F_v = \BC$. 

We call $\chi(x)$ regular if it has distinct roots in any algebraic closure of $F$. In the following, we assume $\chi(x)$ is regular.

Using the ergodic theory, especially the theory of unipotent flows on homogeneous spaces, 
the works of Eskin-Mozes-Shah\cite{EMS}, Shapira-Zheng\cite{SZ19}, and Zhang\cite{Zhang} establish the following asymptotic result.
There exists a positive real number $C(\chi)$ such that (see \cite[Theorem 1.5]{Zhang})
\[\lim_{\CT \ra +\infty}\frac{N(X,\CT)}{C(\chi)\CT^{[F:\BQ]d} (\log \CT)^{r}} = 1. \]
In the above
\begin{itemize}
  \item $d = \frac{n(n-1)}{2}$.
  \item $\chi(x) = \chi_0(x)\chi_1(x)\cdots \chi_r(x)$ where
  $\chi_i(x)$ is irreducible  with degree $n_i$. 
\end{itemize}

It is a natural question to determine the constant $C(\chi)$ in terms of arithmetic invariants.

Denote by $K = F[x]/(\chi(x))$. Then $K$ is an \etale algebra extension of $F$ of degree $n$. Denote by  $R = \CO[x]/(\chi(x))$. Then $R$ is an order in $K$. Here, an ($\CO$-)order in $K$ is a finitely generated $\CO$-algebra $R$ in $K$  
with $R \otimes_\CO F = K$.

Assume $\chi$ is irreducible (so that $K$ is a number field), $F = \BQ$ and $\CO_K = R$. Then it is a theorem of
Eskin-Mozes-Shah \cite[Theorem 1.1]{EMS} and Shah \cite[Theorem 1.2]{Sha} that
\[C(\chi) =  \Res_{s=1} \zeta_K(s)\cdot\frac{w_n}{ \prod_{k=2}^n \Lambda(k)}.\]
Here,  $w_n$ is the volume of the unit ball in $\BR^d$ and $\Lambda(s) = \pi^{-s/2}\Gamma(s/2)\zeta(s)$ is the complete Riemann zeta function.

In general, we shall relate $C(\chi)$ to the zeta values of orders.

The zeta function $\zeta_R$ for the order $R$ is defined as (see \cite{Yun})
\[\zeta_R(s) =  \sum_{M \subset R^\vee} \left( \# R^\vee/M \right)^{-s}\]
where $M$ runs over all nonzero  $R$-modules contained in $R^\vee = \Hom_\BZ(R,\BZ)$. 
When $K$ is a number field and $R = \CO_K$, $\zeta_R$ is the usual Dedekind zeta function. 
The introduction of these zeta functions enables us to determine $C(\chi)$ for any regular $\chi(x)$. 

Similar to the Dedekind zeta function, the function $\zeta_R$
admits a meromorphic continuation to the whole $s$-plane with a pole at $s=1$ of degree $r+1$ (in particular,
if $\chi$ is irreducible, it has a simple pole at $s=1$). Moreover, the leading coefficient of $\zeta_R$ at $s=1$
has a formula which is an analogue to the one for the Dedekind zeta function (see Proposition \ref{prop-residue-formula}).

When $F = \BQ$, our result is the following one. 
\begin{thm}\label{thm-main-Q}
	Assume $F = \BQ$ and $\chi(x)$ is regular. 
    Then
	\[C(\chi) =   \lim_{s\ra 1} (s-1)^{r+1} \zeta_{R}(s) \cdot \frac{v_\chi w_n}{\prod_{k=2}^n \Lambda(k)}.\]
	Here,
    \[v_\chi = n^{r-1} \prod_{i=0}^r n_i.\]
    In particular, if $\chi$ is irreducible, then $v_\chi = 1$ and if $\chi$ is completely reducible, then $v_\chi = n^{n-2}$.
\end{thm}

For general $F$, we have the following.
\begin{thm}\label{thm-main}
  Assume $\chi(x)$ is regular. Then
  \[C(\chi) =   \sum_E \phi([E:F])\delta(E) \lim_{s\ra 1} (s-1)^{r+1} \zeta_{R_E}(s) \cdot
   \frac{ v_\chi w_n}
   {|\Delta_F|^{d}\, \pi^{nr_2} \, \Res_{s=1}\Lambda_F(s) \prod_{k=2}^n \Lambda_F(k)}.\]
  Here,  
  \begin{itemize}   
    \item $E$ runs over fields $F \subset E \subset K_i$, $0 \leq i \leq r$  such that
	    $E/F$ is cyclic and unramified everywhere (including the archimedean places). Here, we write $K = \prod_{i=0}^r K_i$ 
	    with $K_i = F[x]/(\chi_i(x))$. We fix embeddings $K_i$, $0 \leq i \leq r$ to the splitting field of $\chi$. 
    \item The function $\phi$ is the Euler totient function.
    \item $\delta(E) = \prod_{v<\infty} \delta_v(E)$ with $\delta_v(E) =  [\CO_{E_v}^1: \CO_{E_v}^1 \cap \RN_{K_v/E_v} \CO_{K_v}^\times]$
	    (see Lemma \ref{lem-c_G} for notations). If $E = F$ or $K/E$ is unramified, then $\delta(E) = 1$.
    \item For each $E$, $R_E = \CO_E[x]/(\chi_E(x))$ where $\chi_E(x) \in \CO_E[x]$ is an irreducible factor of $\chi(x)$
	    in $E$. Then $R_E$ is an order in $K$.
    \item $v_\chi = [F:\BQ]^rn^{r-1} \prod_{i=0}^r n_i$ is the volume of certain polytope defined in Proposition \ref{prop: c_T} and  determined in Proposition \ref{prop:volume-polytope}. 
    \item $r_2$ is the number of complex
    places of $F$.
   \item $w_n = \prod_{v|\infty} w_{n,v}$  with $w_{n,v}$ the volume for the unit ball 
	   \[B_{n,v} = \left\{ (x_j) \in F_v^{d} \Big| \sum_j |x_j|_v^{2/e_v} \leq 1 \right\}.\]
   \item $\Lambda_F(s) = \prod_{v|\infty} \Gamma_v(s) \zeta_F(s)$ is the complete Dedekind zeta function for $F$. Here,
	   if $v$ is real, then  $\Gamma_v(s) = \pi^{-s/2}\Gamma(s/2)$ and if $v$ is complex, then $\Gamma_v(s) = 2(2\pi)^{-s}\Gamma(s)$.
   \end{itemize} 
\end{thm}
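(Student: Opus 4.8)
The strategy is to reduce the count $N(X,T)$ to a sum of adelic (twisted) orbital integrals, and then to evaluate those integrals explicitly via the endoscopic fundamental lemma and a residue computation for zeta functions of orders. First I would invoke the work of Wei--Xu \cite{WX}: since $X$ is a homogeneous space under $\SL_n$ (or $\PGL_n$) with the stabilizer the norm-one torus $T^1$ attached to $K/F$, the integral points satisfy strong approximation with Brauer--Manin obstruction, and the counting function $N(X,T)$ decomposes as a finite sum indexed by a subgroup of the Brauer group $\Br(X)/\Br(F)$. Concretely, one gets
\[
N(X,T) \sim \vol(\{x : \|x\| \le T\}) \cdot \sum_{\text{Brauer classes}} (\text{local density product}),
\]
and the archimedean volume contributes the factor $w_n T^{d_\infty d}(\log T)^{\cdots}$ after a standard computation (the $\prod_{k=2}^n\Lambda_F(k)$ and $|\Delta_F|^d$ factors come from comparing the Haar measure on $\SL_n(F\otimes\BR)$ with the Euclidean measure on matrix space, i.e. Minkowski/Siegel-type volume formulas). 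The Brauer group of $X$ here is governed by $H^1$ of the torus $T^1$, hence by Tate--Nakayama duality by the character group of $\pi_0$ of the dual torus; this is exactly where the cyclic unramified subextensions $F\subset E\subset K$ enter as the indexing set.

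Next I would translate the sum over Brauer classes into a sum of $\kappa$-orbital integrals on $\SL_n$ (following the observation of Jeon--Lee \cite{JL24B} and Lee \cite{Lee}, extended to all of $K$, not only the $\chi(0)=1$ case — one uses $\PGL_n$ or keeps track of the center), where $\kappa$ runs over the characters of the relevant component group. Applying the endoscopic fundamental lemma, each $\kappa$-orbital integral on $\SL_n$ is matched with a stable orbital integral on the endoscopic group attached to $\kappa$, which for these torus-stabilizer situations is (a group closely related to) $\SL$ of the field $E$ fixed by the kernel of $\kappa$. The multiplicity $\phi([E:F])$ counts the characters $\kappa$ with a given fixed field $E$, and the local correction factors $\delta_v(E) = [\CO_{E_v}^1 : \CO_{E_v}^1 \cap \RN_{K_v/E_v}\CO_{K_v}^\times]$ arise from comparing the canonical measures / transfer factors at the ramified places (vanishing, i.e. equal to $1$, when $K/E$ is unramified, as asserted); the global product $\delta(E) = \prod_{v<\infty}\delta_v(E)$ is finite since almost all factors are $1$.

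The crucial remaining input — and I expect this to be the \textbf{main obstacle} — is to identify each stable orbital integral over $E$ with the residue $\Res_{s=1}\zeta_{R_E}(s)$, where $R_E = \CO_E[x]/(\chi_E(x))$. The point is that the (adelic) stable orbital integral of the characteristic function of $M_{n/[E:F]}(\CO_{E_v})$ at the elliptic element with characteristic polynomial $\chi_E$ is, place by place, a count of $\CO_{E_v}$-lattices in $K_v$ stable under $R_{E,v}$; taking the product over all finite places and combining with the archimedean comparison of measures, this global lattice-counting problem is precisely what the zeta function $\zeta_{R_E}(s) = \sum_{M\subset R_E^\vee}(\#R_E^\vee/M)^{-s}$ computes, and its residue at $s=1$ is given by Yun's formula \cite[Theorem 1.2]{Yun} quoted above. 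Making this link rigorous requires (i) carefully matching the normalizations of Haar measures (hence the $\pi^{nr_2}$ and $\Res_{s=1}\Lambda_F(s)$ denominators, which are the Tamagawa-type volume factors of the norm-one torus over $F$), (ii) checking that the archimedean orbital integrals reproduce the $2^{r_1}(2\pi)^{r_2}R_K/(w_K\sqrt{|D_{R_E}|})$ part of the residue formula via the unit/regulator contribution, and (iii) handling the subtlety that $\zeta_{R_E}$ sums over \emph{all} $R_E$-submodules (not just invertible ideals), which matches the orbital integral exactly because the orbital integral ranges over all lattices, not only the locally free ones. Once these pieces are assembled — Brauer--Manin reduction, Tate--Nakayama to get the cyclic subfields $E$, fundamental lemma, and the orbital-integral/zeta-residue dictionary — the stated formula for $C(\chi)$ follows by collecting the factors. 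Theorem 1.1 is then the special case $F=\BQ$, where the only $E$ is $F$ itself (every cyclic subextension of $K/\BQ$ unramified everywhere is trivial, by Minkowski), $\delta(F)=1$, $r_2=0$, $|\Delta_\BQ|=1$, $\Res_{s=1}\Lambda_\BQ(s) = 1$, and $R_F = R$, so the general formula collapses to the clean expression with $\prod_{k=2}^n\Lambda(k)$.
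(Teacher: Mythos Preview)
Your proposal is correct and follows essentially the same route as the paper: Wei--Xu/Brauer--Manin reduction, Tate--Nakayama to index by cyclic unramified subfields $E$, the fundamental lemma to pass to endoscopic groups, and Yun's zeta--orbital-integral identity to recognize each endoscopic contribution as $\Res_{s=1}\zeta_{R_E}(s)$. The only notable technical differences are that the paper removes the $\chi(0)=1$ restriction by working with the conjugation action of $\SL_n$ on the \emph{Lie algebra} $\fg\fl_n$ (so that the $\kappa$-orbital integrals become twisted orbital integrals on $\GL_n$, to which Waldspurger's fundamental lemma for full Hecke algebras applies at all residue characteristics) rather than via $\PGL_n$, and that the equidistribution input of Eskin--Mozes--Shah/Zhang is made explicit as the prerequisite for the Wei--Xu step.
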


\begin{remark}
	The result is independent of the choice of $\chi_E$ for each $E$. In fact, two different such $\chi_E$ are Galois
	conjugate  and their corresponding orders $R_E$ are also Galois conjugate. In particular,
	 they share the same zeta function.
\end{remark}

\begin{remark}
Given Theorem \ref{thm-main} above, it seems quite interesting and feasible to devise an algorithm to compute the leading coefficient $C(\chi)$.
\end{remark}

Previously, there are already many important works to generalize the above result of  Eskin-Mozes-Shah.

For general $F$ and irreducible $\chi(x)$, the work of Wei-Xu \cite{WX} relates $C(\chi)$ to certain local integrals indexed
by the Brauer group of $X$. In particular, they give the
difference between $C(\chi)$ and an Euler product.  Their work 
reduces the determination of $C(\chi)$ into a local problem. 

Recently, Jeon-Lee \cite{JL24B} and Lee \cite{Lee} observe that these local integrals  are 
 $\kappa$-orbital integrals on $\SL_n$ when $\chi(0)=1$. Leaving aside zeta functions for orders,  when
    $F \not = \BQ$, Theorem \ref{thm-main} is obtained in  \cite{Lee} for $\chi$ irreducible and $K/F$ unramified everywhere 
    under the condition that 
    \begin{enumerate}
    \item $\chi(0) = 1$; 
    \item $K/F$ is a cyclic extension with prime degree. 
	\end{enumerate}
    In particular, under the second condition, the only two choices of $E$ are $F$ and $K$.
    The paper \cite{JL24B} is aimed at removing the first condition that $\chi(0) = 1$ (but also assumes the 
    second condition).  The first condition is weakened
    as $\chi(0) = 1$ or $\chi(x)$ splits over $F_v$ at those $p$-adic places $v$ of $F$
    for $p \leq n$.
In general, it is not expected to give an explicit formula for these
 $\kappa$-orbital integrals even if we can relate them to stable orbital integrals on endoscopic subgroups via the
 celebrated fundamental lemma.

Assume $\chi(x) = \chi_0(x)$ is completely reducible. When
$F = \BQ$, $C(\chi)$ is determined by Shapira-Zheng \cite{SZ19} (without an explicit formula for the  volume $v_\chi$). For general $F$ and $n=2$,
the work of Xu-Zhang \cite{XZ} relates $C(\chi)$ to certain
local integrals. Note that in this case, the
 Brauer group of $X$ is trivial.

It is the perspective of relating the constant $C(\chi)$ to zeta values of orders that allows us to eliminate all assumptions in previous work.

The proof can be sketched as  follows. Denote by $G = \SL_n$ and $\wt{G} = \GL_n$. Also denote by $\fg = \fs\fl_n$ and 
$\wt{\fg} = \fg\fl_n$ their Lie algebras. Fix $\gamma \in \wt{\fg}(\CO)$ with 
characteristic polynomial $\chi(x)$. We consider the conjugation action of $\wt{G}$ and also its subgroup $G$ on $\wt{\fg}$. 
Denote by $T$ and $\wt{T}$ the stabilizer of $\gamma$ in $G$ and $\wt{G}$ respectively. Then $X = T \bs G = \wt{T} \bs \wt{G}$. 
\begin{enumerate}
	\item Consider the action of $\Gamma = G(F) \cap \mathrm{St}(X)$ on $X(\CO)$. Here, $\mathrm{St}(X)$
		is a compact subgroup of $G(\BA)$ stabilizing $X(\CO_v)$ for each $v<\infty$. 
		For the contribution of each $\Gamma$-orbit in $N(X,\CT)$, we have the following important equi-distribution property
		(see Theorem \ref{thm: equi-distribution}): for any $\gamma' \in X(\CO)$
		\[\# \left\{y \in \gamma' \cdot \Gamma \Big| ||y|| \leq \CT  \right\} \sim
		v_\chi \frac{\Vol(\Gamma \cap T_{\gamma'}(F) \bs T_{\gamma'}(F_\infty)^1)}{\Vol(\Gamma \bs G(F_\infty))} 
		\Vol\left(\left\{ x \in \gamma' \cdot G(F_\infty) \Big| ||x|| \leq \CT \right\}  \right) (\log \CT)^r.\]
        Here, the notation $A(\CT) \sim B(\CT)$ means that $\lim_{\CT \ra +\infty} \frac{A(\CT)}{B(\CT)} = 1$. This equi-distribution
	property is a refinement of the main result in \cite{Zhang}.
	\item Using basic properties of the Brauer-Manin obstruction on $X$, the decomposition of $N(X,\CT)$ via
		the $\Gamma$-orbits of $X(\CO)$ can be expressed as a sum of products of local integrals indexed by the Brauer group
		of $X$ (see Theorem \ref{thm-wx}). This step is a  generalization of the work of Wei-Xu \cite{WX} for the elliptic 
		case. Moreover, by the Tate-Nakayama duality, 
		\[\Br(X)/\Br(F) =  H^1(F,T(\BA_{\bar{F}})/T(\bar{F}))^\vee.\]
		This suggests that these local integrals are exactly the (generalized) $\kappa$-orbital integrals for the conjugation action 
		of $G$ on $\wt{\fg}$ where $\kappa$ is a character on $H^1(F,T(\BA_{\bar{F}})/T(\bar{F}))$.
		Precisely (see Theorem \ref{thm-orbit}), we have
		\[N(X,\CT) \sim v_\chi C_0^\Tam \sum_\kappa \CO_\gamma^\kappa(f^\CT) (\log \CT)^r\]
		where $C_0^\Tam$ is a constant coming from the comparison of different measures and 
		$f^\CT = f_\infty^\CT \otimes 1_{\wt{\fg}(\wh{\CO})} \in C_c^\infty(\wt{\fg}(\BA))$ is a certain test function on $\wt{\fg}(\BA)$.
	\item   Note that the above $\kappa$-orbital integrals are in fact certain twisted orbital integrals on $\wt{\fg}$ 
		(see Proposition \ref{lem:kappa-inv}). 
		By the fundamental lemma (see Theorem \ref{fl-LW}) 
		relating the twisted orbital integrals on $\wt{G}$ to  orbital integrals
		on  its endoscopic subgroups given by Lemaire-Waldspurger \cite{LW17}, for each $\kappa$,
		\[\CO_\gamma^\kappa(1_{\wt{\fg}(\wh{\CO})}) \stackrel{.}{=} \CO_{\gamma_H}(1_{\wt{\fh}(\wh{\CO})}).\]
		Here, $H$ is the endoscopic subgroup of $G$ given by $\kappa$,
        $1_{\wt{\fh}(\wh{\CO})}$ is the characteristic function of $\wt{\fh}(\wh{\CO})$ with $\wh{\CO} = \prod_{v<\infty} \CO_v$ and $\CO_{\gamma_H}(1_{\wt{\fh}(\wh{\CO})})$ is 
		the usual  orbital integral on $\wt{H}$:  $H = \Res_{E/F}^1 \GL_m$ and $\wt{H} = \Res_{E/F} \GL_m$ 
		where $E/F$ is  unramified everywhere and $m=[K:E]$. Denote by $\wt{\fh} = \Res_{E/F} \fg\fl_m$
        the Lie algebra of $\wt{H}$.
	\item   Finally, we have the following relation (see Theorem \ref{zeta-orb}) 
		between zeta functions and orbital integrals given by Yun \cite{Yun}
		\[ \frac{\zeta_{R}(s)}{\zeta_K(s)}\Big|_{s=1} =  \frac{\CO_{\gamma}(1_{\wt{\fg}(\wh{\CO})})}{[\CO_K:R]}.\]
		By applying such relations to each endoscopic subgroups $H$ of $G$, we obtain the main result.

\end{enumerate}

The above steps (1,2,3) are inspired by and parallel to the proofs in \cite{JL24B, Lee}. 

We discuss some technical details in the proof. 

For Step (1), it is necessary to refine the equi-distribution property in \cite{Zhang}. In Proposition \ref{prop: c_T}, the asymptotic behavior of the integral for the volume of a certain truncation window is expressed in terms of the volume $v_\chi$ of a certain polytope $P_\chi$ and a term involving powers of the logarithm. Following the method in \cite{SZ19}, the proof of this proposition is given in the appendix.

One needs to study the volume $v_\chi$. By investigating the polytopes induced by submodular functions, we can reduce the problem to certain graphical zonotopes
(see Lemma \ref{lem-submodular}). Combining this with standard results in graph theory (such as Cayley’s identity), we obtain the volume of these polytopes. It is interesting that  such graphical zonotopes also appear in Arthur’s trace formula (see \cite{FL11}).

Note that in \cite{JL24B} there is the condition that $\chi(0) = 1$ or $\chi(x)$ splits over $F_v$ at those $p$-adic places $v$ of $F$
for $p \leq n$. Under such a condition, it is enough to consider the conjugation action of $G$ on $G$. 
To remove this condition, we replace this action by the conjugation action of $G$ on $\wt{\fg}$. As mentioned above, 
the corresponding (generalized) $\kappa$-orbital integrals are exactly certain twisted orbital integrals on $\wt{\fg}$ which can also be
related to orbital integrals on endoscopic groups by the fundamental lemma.  Moreover, to remove the restriction of residue characteristic
for the fundamental lemma of Lie algebras, we apply the fundamental lemma of groups for full Hecke algebras (see the proof of Theorem \ref{fl}.)

The zeta functions in step (4) have a close relation with the theory of beyond endoscopy \cite{Arthur18}, which is a strategy Langlands advanced to apply the trace formula to the general principle of functoriality.
In the strategy, certain kind of Poisson summation formula should be applied to the elliptic part of the trace formula. For this, as observed by Arthur, one may study analytic behaviors of orbital integrals by realizing them as residues of these zeta functions. Based on this observation, the second named author  and Espinosa generalize the work \cite{AliI}  to $\GL_3(\Q)$ \cite{DM26}, which isolates the contribution of the trivial representation from the regular elliptic part of the trace formula via a Poisson summation formula.
From this point of view,  the summation of  $N(X_\gamma, \CT)$ over the stable elliptic orbits $\gamma$ by applying our main Theorem \ref{thm-main}  may provide interesting connections to the theory of beyond endoscopy.

\subsection*{Acknowledgements}
\addtocontents{toc}{\protect\setcounter{tocdepth}{1}}

We thank Professor Fei Xu for bringing this question to our attention and for his encouragement during the preparation of the paper.
We thank Professor Runlin Zhang for contributing the proof of Proposition \ref{prop: c_T} in the appendix.
L. Cai is supported by the National Key R\&D Program of China No.
2023YFA1009702 and National Natural Science Foundation of
China, No. 12371012. T. Deng
 is supported by National Natural Science Foundation of
China, No. 12401013 and Beijing Natural Science Foundation, No. 1244042.

\section{Endoscopy for $\SL_n$ and twisted endoscopy for $\GL_n$}

In this section, we first recall the  endoscopy for general quasi-split groups. Then we specialize to  $\SL_n$. We also recall the relation between endoscopy for $\SL_n$ and twisted endoscopy for $(\GL_n, \varepsilon)$ with $\varepsilon$
a character on $\GL_n$.

Let $F$ be a number field with $\Gamma = \Gal(\bar{F}/F)$ the absolute Galois group.
Let $G$ be a quasi-split connected reductive group over a number field $F$. We fix a Borel pair $(T_0,B_0)$ where
$B_0$ is a Borel subgroup of $G_0$ and $T_0$ is a maximal torus in $B_0$. Denote by  $(X^*(T_0), \Delta, X_*(T_0), \Delta^\vee)$
the based root datum of $(G,B_0,T_0)$. Here, $X^*(T_0)$ and $X_*(T_0)$ are the groups of characters and co-characters on $T_0$
with $\Delta$ and $\Delta^\vee$ the sets of simple roots and simple coroots. There is an action of $\Gamma$ on $X^*(T_0)$ and 
$X_*(T_0)$ such that $\Delta$, $\Delta^\vee$ and the canonical pairing on $X^*(T_0) \times X_*(T_0)$ are all invariant.

The connected Langlands dual group $\wh{G}$ is defined to be the unique triple $(\wh{G},\wh{B_0},\wh{T_0})$, defined over $\BC$,
with the based root datum $(X_*(T_0),\Delta^\vee,X^*(T_0),\Delta)$. We fix a pinning 
$\left(\wh{T_0},\wh{B_0},\{ \wh{X}_{\alpha^\vee} \}_{\alpha^\vee} \right)$ where  $\alpha^\vee$ runs over
the set of simple coroots $\Delta^\vee$ and $\wh{X}_{\alpha^\vee} \not=0$ in the $1$-dimensional subspace of the Lie algebra of $\wh{G}$
corresponding to $\alpha^\vee$. The action of $\Gamma$ on the based root system $(X_*(T_0),\Delta^\vee,X^*(T_0),\Delta)$ lifts uniquely
to an automorphism of $\wh{G}$ keeping the pinning. 

Let $T$ be a maximal torus of $G$. Then there is a canonical $\wh{G}$-invariant class of embeddings 
\[\iota: \wh{T} \lra \wh{G}\]
which is $\Gamma$-equivariant up to inner automorphisms (see \cite[Construction 3.2.4]{Kal}). In other words, for any $\sigma \in \Gamma$, there is $g \in \wh{G}$ such that
\[\sigma_G \circ \iota \circ \sigma_T^{-1} = \Ad(g) \circ \iota.\]
Here, $\sigma_G$ and $\sigma_T$ denote the actions of $\sigma \in \Gamma$ on $\wh{G}$ and $\wh{T}$. The action of $\Gamma$ on $\wh{T}$ 
is given via the isomorphism $\wh{T} = X^*(T) \otimes \BC^\times$.

Following \cite[Section 7.4]{Kot84}, an endoscopic triple $(H,s,\eta)$ consists of
\begin{itemize}
	\item a quasi-split connected reductive group $H$;
	\item an embedding $\eta: \wh{H} \ra \wh{G}$, where $\wh{H}$ and $\wh{G}$ are the connected Langlands dual groups of $H$
		and $G$ defined over $\bar{F}$;
	\item an element $s \in Z(\wh{H})$.
\end{itemize}
The triple is required to satisfy the conditions \cite[(7.4.1,7.4.2,7.4.3)]{Kot84}.

An endoscopic datum for $G$ is a pair $(s,\rho)$. Here,
$s$ is a semisimple element in $\wh{G}/Z(\wh{G})$ and $\rho:\Gamma \ra 
\mathrm{Out}(\wh{G}_s^0)$ is a homomorphism. Here, $\wh{G}_s^0$ is 
the connected component of the centralizer of $s$ in $\wh{G}$.
The pair is required to satisfy the conditions \cite[(7.1.1,7.1.2)]{Kot84}. 

There is a bijection between the set of isomorphism classes of endoscopic triples and the set of isomorphism classes of endoscopic datum (see \cite[Section 7.6]{Kot84}).

In the following, denote by $G = \SL_n$ and $\wt{G} = \GL_n$.

For our application, we will also need  the twisted version (\cite[(2.1.1)-(2.1.4)]{KS99}) of the above definition of endoscopy data  for $(\widetilde G, \varepsilon)$, where  $\varepsilon$ is a  character on $\wt{G}(F_v)$ or $\wt{G}(\BA_F)$.
In fact, for our case, as is explained in \cite[Page 3]{HS12}, an endoscopic 
data $(\tilde{H}, \tilde{s}, \tilde{\eta})$ for $(\tilde G, \varepsilon)$ is naturally related to  
an endoscopy data $(H, s, \eta)$ for $G$ through restriction and vice versa. For this reason and to avoid further complications, in the rest of the section we restrict ourselves to consider the endoscopy for $G$.

Let $\gamma \in G(F)$ be regular semisimple. Denote by
$T = T_\gamma$ the stabilizer of $\gamma$ in $G$ and 
$\wt{T}$ the stabilizer of $\gamma$ in $\wt{G}$.
Write 
\[K=F[\gamma]\cong  K_0^{\ell}\times K_1\times K_2\times \cdots \times K_a\] 
with $K_0 = F$ and $[K_i:F]=\ell_i>1$ for $1\leq i\leq a$. Then
\begin{equation}\label{eq: decomposition-torus}
\wt{T}=\Gm^{\ell}\times \Res_{K_1/F}\Gm\times\cdots \times \Res_{K_a/F}\Gm
\end{equation}
and 
\[T=\ker(\RN:\wt{T}\to\Gm ), \quad \RN\bigl((t_j)_j,(x_i)_i\bigr)=\bigl(\prod_{j=1}^{\ell}t_j\bigr)\cdot\prod_{i=1}^a \RN_{K_i/F}(x_i).\]
In the following, we fix an embedding $\iota: \wh{T} \hookrightarrow \wh{G}$ in the canonical $\wh{G}$-conjugacy class. 

Consider the cohomology group
\[H^1(F,T(\mathbb{A}_{\overline{F}})/T(\overline{F})).\]
Here, $\BA_{\bar{F}} = \BA \otimes_F \bar{F}$ is the direct limit of $\BA_L$ where  $L/F$ are extensions of number fields. 

In the following, for a character $\kappa$  on $H^1(F,T(\mathbb{A}_{\overline{F}})/T(\overline{F}))$,
we shall associate an endoscopic triple $(H,s,\eta)$ and an element $\gamma_H \in H(F)$. When $\gamma$ is elliptic, this is \cite[Lemma 9.7] {Kott}. 

In fact, by the Tate-Nakayama duality  \cite[(2.1.1)]{Kott}, we have 
\[
  \pi_{0}\bigl(\widehat{T}^{\Gamma}\bigr)^{\vee}
    \cong
  H^1(F,T(\BA_{\bar{F}})/T(\bar{F})).
\]
We may view $\kappa$ as an element in $\pi_{0}\bigl(\widehat{T}^{\Gamma}\bigr)$. If $\gamma$ is elliptic (and $T$ is anisotropic), 
the group $\wh{T}^\Gamma$ is finite so that $\pi_0\left( \wh{T}^\Gamma \right) = \wh{T}^\Gamma$. Therefore, $\kappa$ gives
an element $s \in \wh{T}^\Gamma$. We may consider $\wh{H} = \wh{G}_s^0$. In general, we shall choose the lifting $s$ of $\kappa$ 
satisfying the condition that $\wh{G}_s^0$ is maximal (See Lemma \ref{lem:s}). 

Moreover, we shall classify these characters $\kappa$ in terms of subfields of $K$ (see Corollary \ref{cor:kappa}). 
With our choice of $s$, this determines the endoscopic subgroup $H$ and the element $\gamma_H \in H(F)$ associated to $\kappa$ (see Proposition \ref{ell-endo}).

\begin{lem}\label{lem-duality-picard}
We have $ H^1(F,T(\BA_{\bar{F}})/T(\bar{F}))^\vee \cong \Pic(T)$.
\end{lem}
\begin{proof}

Let $\pi_{1}(T)$ denote the algebraic fundamental group of $T$
(as in \cite{BR95}).  By \cite[Proposition~6.3]{CT} there is a canonical
perfect pairing
\[
  \Pic(T)\times(\pi_{1}(T)_{\Gamma})_{\tors}
  \longrightarrow\BQ/\BZ,
\]
where $\Gamma = \Gal(\overline{F}/F)$ and $(\cdot)_{\tors}$ denotes the torsion
subgroup. Hence we obtain an isomorphism
\begin{equation}\label{eq:Pic-dual-pi1}
  \Pic(T)^{\vee}
    \cong
  (\pi_{1}(T)_{\Gamma})_{\tors}.
\end{equation}
Let $\widehat{T}$ be the connected complex Langlands dual group of $T$.
Borovoi–Rudnick identify the torsion of $\pi_{1}(T)_{\Gamma}$ with the
component group of $\widehat{T}^{\Gamma}$ (cf. \cite[Section~3.4]{BR95}):
\begin{equation}\label{eq:BR}
  (\pi_{1}(T)_{\Gamma})_{\tors}
\cong\pi_{0}\bigl(\widehat{T}^{\Gamma}\bigr)^{\vee}.
\end{equation}
  Thus, combining
\eqref{eq:Pic-dual-pi1} and \eqref{eq:BR}, we have
\begin{equation}\label{eq:Pic-dual-pi0Hhat}
  \Pic(T)^{\vee}
    \cong
\pi_{0}\bigl(\widehat{T}^{\Gamma}\bigr)^{\vee}.
\end{equation}
Finally, combining the Tate-Nakayama duality \cite[(2.1.1)]{Kott} with \eqref{eq:Pic-dual-pi0Hhat},  we have 
\[
  H^1(F,T(\BA_{\bar{F}})/T(\bar{F}))\;\cong\;\Pic(T)^{\vee}.
\]
\end{proof}

We fix once and for all a splitting field $L$ for $\chi$.  Write 
$\Lambda:=\Gal(L/F)$. Moreover,
for each $i$ we fix an $F$-embedding $K_i\hookrightarrow L$
and set $\Lambda_i:=\Gal(L/K_i)$.

\begin{prop}\label{prop: Picard-general-torus}
The character lattice $X^*(T)$ of $T$ fits into an exact sequence of $\Gamma$-lattices
\begin{equation}\label{eq:char-seq}
0\longrightarrow \Z \xrightarrow{\ \Delta\ }
\Bigl(\Z^{\ell}\bigoplus\bigoplus_{i=1}^a \Z[\Lambda/\Lambda_i]\Bigr)
\longrightarrow X^*(T)\longrightarrow 0,
\end{equation}
where $\Delta(1)=(1,\dots,1;\mathbf 1_1,\dots,\mathbf 1_a)$ and
$\mathbf 1_i:=\sum_{g\Lambda_i\in \Lambda/\Lambda_i} g\Lambda_i$.

Moreover,
\[
\Pic(T)\ \cong\ H^1(\Lambda,X^*(T))
\ \cong\ \ker\!\Bigl(H^2(\Lambda,\Z)\longrightarrow
H^2(\Lambda,\Z)^{\ell}\bigoplus\bigoplus_{i=1}^a H^2(\Lambda_i,\Z)\Bigr),
\]
where the morphism on the right hand side is given by 
\[
c\ \longmapsto\ \Bigl(\underbrace{c,\dots,c}_{\ell\ \text{times}},
\ \Res_{\Lambda_1}(c),\dots,\Res_{\Lambda_a}(c)\Bigr).
\]
Here, for each $i$, $\Res_{\Lambda_i}(c)$ is the restriction of $c$
to $\Lambda_i$.

In particular, if $\ell>0$ then
\[
\Pic(T)=0.
\]
If $\ell=0$, by $H^2(\Lambda,\Z)\cong H^1(\Lambda,\Q/\Z)\cong\Hom(\Lambda,\Q/\Z)$, we have
\[
\Pic(T)\ \cong\
\left\{\chi\in\Hom(\Lambda,\Q/\Z)\Big| \chi|_{\Lambda_i}=0, \forall i\right\}.
\]
\end{prop}

\begin{proof}
We have a short exact sequence of $F$-tori
\begin{equation}\label{eq:tori-seq}
1\longrightarrow T\longrightarrow \wt{T}\xrightarrow{\ N\ } \Gm\longrightarrow 1.
\end{equation}
Let $L/F$ be the splitting field of $\chi$, and put
$\Gamma:=\Gal(L/F)$. Over $L$ all tori in \eqref{eq:tori-seq} split.
Applying the functor of characters $X^*(-)=\Hom_L(-,\Gm)$ to
\eqref{eq:tori-seq} yields an exact sequence of $\Gamma$-lattices
\begin{equation}\label{eq:Z-to-XS}
0\longrightarrow \Z \xrightarrow{\ \Delta\ } X^*(\wt{T})\longrightarrow X^*(T)\longrightarrow 0.
\end{equation}
Here $\Delta$ is induced by $N$.
One has
$X^*(\Gm^{\ell})\cong \Z^{\ell}$ with trivial $\Lambda$-action, and for each $i$,
\[
X^*(\Res_{K_i/F}\Gm)\ \cong\ \Ind_{\Lambda_i}^{\Lambda}\Z\ \cong\ \Z[\Lambda/\Lambda_i],
\]
where $\Lambda_i:=\Gal(L/K_i)$. Thus
\[
X^*(\wt{T})\ \cong\ \Z^{\ell}\bigoplus\bigoplus_{i=1}^a \Z[\Lambda/\Lambda_i].
\]
Under this identification, 
\eqref{eq:Z-to-XS} is precisely
\eqref{eq:char-seq} in the statement.

By (\ref{eq:Pic-dual-pi0Hhat}), we have 
\[
\Pic(T)\cong \pi_{0}\bigl(\widehat{T}^{\Lambda}\bigr).
\]
By \cite[Section~2.1]{Kott}, there is a canonical isomorphism
\begin{equation}\label{eq:pi0Hhat-H1Xstar}
  \pi_{0}\bigl(\widehat{T}^{\Lambda}\bigr)
    \;\xrightarrow{\ \sim\ }
  H^{1}\bigl(F,X_{\ast}(\widehat{T})\bigr),
\end{equation}
By the identification $X^{\ast}(\widehat{T}) \cong X_{\ast}(T)$,
\begin{equation}\label{eq:Pic-H1}
\Pic(T)\ \cong\ H^1(\Lambda,X^*(T)).
\end{equation}
Since $X^*(\wt{T})$ is a direct sum of  induced
$\Gamma$-modules, we have 
$
H^1(\Lambda,X^*(\wt{T}))=0.
$
Hence taking $\Lambda$-cohomology of  \eqref{eq:Z-to-XS} gives:
\[
0\rightarrow H^1(\Lambda,X^*(T))\ \rightarrow\ H^2(\Lambda,\Z)\rightarrow H^2(\Lambda,X^*(\wt{T})).
\]
 Thus
\begin{equation}\label{eq:H1-kernel}
H^1(\Lambda,X^*(T))\ \cong\ \ker\!\Bigl(H^2(\Lambda,\Z)\to H^2(\Lambda,X^*(\wt{T}))\Bigr).
\end{equation}

Now compute $H^2(\Lambda,X^*(\wt{T}))$. Since
$X^*(\wt{T})\cong \Z^{\ell}\bigoplus\bigoplus_{i=1}^a \Z[\Lambda/\Lambda_i]$,
we have
\[
H^2(\Lambda,X^*(\wt{T}))\ \cong\ H^2(\Lambda,\Z)^{\ell}\ \bigoplus\
\bigoplus_{i=1}^a H^2\bigl(\Lambda,\Z[\Lambda/\Lambda_i]\bigr).
\]
By Shapiro's lemma,
\[
H^2\bigl(\Lambda,\Z[\Lambda/\Lambda_i]\bigr)\ \cong\ H^2(\Lambda_i,\Z).
\]
Under these identifications, the map
$H^2(\Lambda,\Z)\to H^2(\Gamma,X^*(\wt{T}))$ induced by $\Delta$ is
\[
c\ \longmapsto\ \Bigl(\underbrace{c,\dots,c}_{\ell\ \text{times}},
\ \Res_{\Lambda_1}(c),\dots,\Res_{\Lambda_a}(c)\Bigr),
\]
where $\Res_{\Lambda_i}(c)$ denotes the restriction to $\Lambda_i$.

Combining \eqref{eq:Pic-H1} and \eqref{eq:H1-kernel} yields the asserted formula
for $\Pic(T)$ in terms of the kernel of
\[
H^2(\Lambda,\Z)\longrightarrow H^2(\Lambda,\Z)^{\ell}\oplus\bigoplus_{i=1}^a H^2(\Lambda_i,\Z).
\]
If $\ell>0$, then the map has a component
$c\mapsto c$ into $H^2(\Lambda,\Z)$,
so its kernel is $0$. Hence $\Pic(T)=0$.

From $0\to \Z\to \Q\to \Q/\Z\to 0$ and $H^j(\Lambda,\Q)=0$ for $j\ge1$,
we get $H^2(\Lambda,\Z)\cong H^1(\Lambda,\Q/\Z)\cong \Hom(\Lambda,\Q/\Z)$.
Under this identification, the restriction map corresponds to restricting
characters to $\Lambda_i$, giving the final description in the statement.
\end{proof}

\begin{cor}\label{cor:kappa}
There is a one-to-one map between the characters $H^1(F,T(\mathbb{A}_{\overline{F}})/T(\overline{F}))^\vee$ and
the pairs $(E,\nu)$ where
\begin{itemize}
	\item $E$ is an intermediate field such that $F \subset E \subset K_i\subseteq L$ for all $0 \leq i \leq a$ with $E/F$ a cyclic  ext		ension.
	\item $\nu: \Gal(E/F) \hookrightarrow \BQ/\BZ$ is a faithful character on $\Gal(E/F)$.
\end{itemize}
\end{cor}
\begin{proof}
By Lemma \ref{lem-duality-picard} and Proposition \ref{prop: Picard-general-torus},
\[
H^1(F,T(\mathbb{A}_{\overline{F}})/T(\overline{F}))^\vee\cong \Pic(T)\cong\
\left\{\chi\in\Hom(\Lambda,\Q/\Z)\Big| \chi|_{\Lambda_i}=0, \forall i\right\}.
\]

In particular,
we can identify an element
$\kappa\in H^1(F,T_\gamma(\mathbb{A}_{\overline{F}})/T_\gamma(\overline{F}))^\vee$ with 
a character 
\[\chi_{\kappa}\in \Hom(\Gal(L/F), \BQ/\BZ), \quad \chi_{\kappa}\mid_{\Gal(L/K_i)} = 0, \quad 1 \leq i \leq a.\] 
Therefore, by the Galois theory,  $\ker(\chi_{\kappa})$ determines a cyclic extension $E$ of $F$ and we write
$\nu$ for the induced (faithful) character on $\Gal(E/F)$.
The fact $\chi_{\kappa}\mid_{\Gal(L/K_i)}=1$ implies that $E\subseteq K_i$ for $1 \leq i \leq a$.
\end{proof}

Let $\kappa \in H^1(F,T(\mathbb{A}_{\overline{F}})/T(\overline{F}))^\vee$. 
By the Tate-Nakayama duality, we view $\kappa$ as an element in $\pi_0\left(\wh{T}^\Gamma\right)$. 

\begin{lem}\label{lem:s}
   Let $s \in \wh{T}^\Gamma$ be a lifting of $\kappa \in \pi_0\left(\wh{T}^\Gamma\right)$ such that  
   the stabilizer $\wh{H} = \wh{G}_s$ of $s$ is maximal. Then such an element $s$ is unique. Moreover, assume $\kappa$ corresponds to $(E,\nu)$ in Corollary 
   \ref{cor:kappa}, say $[E:F] = d$ with $dm = n$. We fix a generator $\theta$ of $\Gal(E/F)$ and denote by $u = \nu(\theta)$ where
   $u$ is a primitive $d$-th root of unity by fixing an embedding $\BQ/\BZ \hookrightarrow \BC^\times$.  Then, we have 
   \[s=\diag(\Id_m, u \Id_m, \ldots, u^{d-1}\Id_m)\in \wh{G} = \PGL_n(\BC).\]
\end{lem}
\begin{proof}

Let $\Sigma_K=\Hom_F(K,\overline F)$.
The restriction map $\Sigma_K\to \Hom_F(E,\overline F)$ partitions $\Sigma_K$ into $d$ fibers of equal size $m=n/d$.
Equivalently, after choosing one fiber $\Sigma$ and using the cyclicity of $\Gal(E/F)=\langle\theta\rangle$, we may write
\[
\Sigma_K \;=\; \bigsqcup_{i=0}^{d-1} \theta^i(\Sigma),
\qquad |\Sigma|=m.
\]
Using the standard identification
\[
\widehat T \;\cong\; (\C^\times)^{\Sigma_K}/\Delta(\C^\times),
\]
define an element $s_0\in \widehat T$ by declaring it to take the constant value $u^i$ on the block $\theta^i(\Sigma)$.
Up to conjugation by Weyl group of $\wh G$,  we have
\[
s_0 \;=\; \diag\bigl(\Id_m,\; u \Id_m,\; u^2 \Id_m,\;\dots,\; u^{d-1}\Id_m\bigr)\in \PGL_n(\C).
\]
By construction, $s_0\in \widehat T^{\,\Gamma}$,
and its image in $\pi_0(\widehat T^{\,\Gamma})$ is exactly $\kappa$
via the Tate--Nakayama identification.

Its stabilizer in $\widehat G=\PGL_n(\C)$ is the block-diagonal Levi
\begin{equation}\label{eqn: Levi-max}
\widehat G_{\,s_0}
\;\cong\;
\bigl(\GL_m(\C)\times\cdots\times \GL_m(\C)\bigr)\big/\Delta(\C^\times).
\end{equation}

Note that under the above identification, the element $\theta\in \Gamma$ acts on $\wh T$ via conjugating by
\[
w_\theta=\begin{pmatrix}
&   &\Id_m           &        &     & \\
&   &            & \ddots &    &    \\
&   &  &       & \Id_m & \\
& \Id_m &              &                &       &
\end{pmatrix}.
\]
In particular we have any $s\in \wh T^{\Gamma}$ whose projection to $\pi_0(T^{\,\Gamma})$ is $\kappa$ has the form
\[
s=\diag(z, zu, zu^2, \ldots, zu^{d-1}),  \qquad z\in (\C^\times)^m,
\]
where we regard $(\C^\times)^m$ as the diagonal torus of  $\GL_m(\C)$ in \eqref{eqn: Levi-max}.
In particular, for any such $s$, the centralizer $\wh G_s$ will be a Levi in $\wh G_{s_0}$.
This shows that uniqueness of $s_0$ such that $\wh G_{s_0}$ is maximal.

\end{proof}

We choose $s \in \wh{T}^\Gamma$ as in Lemma \ref{lem:s} and write $\wh{H} = \wh{G}_s$.
The Galois group $\Gamma$ acts on $\wh{T}$. It then acts on the root system of $\wh{H}$ so that it induces a homomorphism
\[\rho: \Gamma \lra \mathrm{Out}(\wh{H}).\]
In particular, from the character $\kappa$, we  obtain an endoscopic datum $(s,\rho)$ of $G$, or equivalently, an endoscopic 
triple $(H,s,\eta)$ of $G$. There exists a unique
maximal torus $T_H$ of $H$ with an admissible isomorphism $T_H \ra T$. Denote by $\gamma_H \in T_H(F)$ the image of $\gamma$ 
in this admissible isomorphism.

\begin{prop}\label{ell-endo}
	Let $\kappa \in H^1(F,T(\mathbb{A}_{\overline{F}})/T(\overline{F}))^\vee$ correspond to $(E,\nu)$ in Corollary \ref{cor:kappa}.
	Then
	\begin{itemize}
	\item $H = \mathrm{Res}^1_{E/F}(\GL_m)$ with $m = [K:E]$.  In other words, $H$ consists of those
    $x \in \mathrm{Res}_{E/F}(\GL_m)$ such that
    $\RN_{E/F}(\det(x)) = 1$.
	\item We view $K$ as a vector space over $E$ and then identify $\gamma$ as an element in $\GL_m(E)$, then
	      in fact $\gamma \in H(F)$ and this is $\gamma_H$.
	\end{itemize}
\end{prop}
\begin{proof}
   We shall use notations in Lemma \ref{lem:s} and its proof so that
   \[s=\diag(\Id_m, u \Id_m, \ldots, u^{d-1}\Id_m)\in \wh{G}\]
   and its centralizer
   \[\wh{H} =  \GL_m(\BC) \times \cdots \times \GL_m(\BC)/\Delta(\BC^\times)\]
   where 
   \[\Delta: \BC^\times \ra \GL_m(\BC) \times \cdots \times \GL_m(\BC)\]
   is the diagonal embedding.

   Let $R(\widehat{T}, \widehat{G})$ be the set of roots 
of $\widehat{T}$ in $\widehat{G}$ over $\BC$.
Consider the set 
\[
R^\vee(T, H):=\left\{\alpha\in R(\widehat{T}, \widehat{G})\mid \langle s, \alpha\rangle=1\right\}.
\]
Note that since $s$ is fixed by $\Gamma$,  $R^\vee(T, H)$ is stable under $\Gamma$.  In fact, note  that 
the set $R^\vee(T, H)$ can be identified with the set of roots in $\wh{H}$.

Now we observe that the Galois group 
$\Gal(E/F)=\langle \theta\rangle$ permutes 
the set of roots from simple factors. 

Moreover, under the above identification, 
we have 
\[
R^\vee(T, H)=\coprod_{i=0}^{d-1}\theta^i(\Phi) 
\]
where $\Phi$ is the set of roots of the first copy of $\GL_m(\BC)$
with respect to the torus consisting of diagonal matrices.
Note that the action of $\theta$ sends the simple roots to simple roots (with respect to the Borel consisting of upper triangular matrices 
in $\GL_n(\BC)$).  The Weyl group element $w_{\theta}$ acts trivially on 
$R^\vee(T, H)$.

In particular, we have 
\[
\eta: \widehat{H}\rtimes \langle \theta\rangle \rightarrow \wh{G} \times \langle \theta\rangle
\]
where 
\[
\eta(\theta)=(w_{\theta}, \theta).
\]

Given the above, the corresponding endoscopic group for $G$ is
\[
H = \mathrm{Res}^1_{E/F}(\GL_m),
\qquad m = [K:E],
\]
with the standard block-diagonal $L$-embedding 
${}^{L}H \to {}^{L}G$ determined by $\eta $.
This completes the proof.

\end{proof}

Let $\kappa \in H^1(F,T(\BA_{\bar{F}})/T(\bar{F}))^\vee$ correspond to a pair $(E,\nu)$ as in 
Proposition \ref{ell-endo}. We give the following three equivalent characterizations for a real component of $\kappa$.

\begin{prop}\label{prop-arch-kappa}
Let $v$ be a real place of $F$. Then the following conditions are equivalent:
\begin{enumerate}
    \item  the degree $[E:F] = 2d$ is even and $E_v\cong \BC^d $;
    \item  the homomorphism $\Gamma_v \hookrightarrow \Gamma\twoheadrightarrow\Gal(E/F)$ is nontrivial;
    \item the character $\kappa_v\neq 1$ with $\kappa_v$ the composition
    $
    \Gamma_v\rightarrow \Gal(E/F)\stackrel{\kappa}{\hookrightarrow} \BC^\times.
    $
\end{enumerate}
\end{prop}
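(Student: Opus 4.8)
The plan is to prove the chain $(1) \Leftrightarrow (2) \Leftrightarrow (3)$, treating $(2) \Leftrightarrow (3)$ as essentially immediate from the setup and concentrating the real work on $(1) \Leftrightarrow (2)$. For $(2) \Leftrightarrow (3)$: by Proposition~\ref{ell-endo} the character $\kappa$ corresponds to a pair $(E,u)$ where $E/F$ is cyclic of degree $[E:F]$ and $\kappa$ factors as $\Gal(E/F) \xhookrightarrow{u} \mu_{[E:F]} \subset \BC^\times$ with $u$ a \emph{faithful} (primitive) character. Hence $\kappa_v \colon \Gamma_v \to \Gal(E/F) \xhookrightarrow{u} \BC^\times$ is trivial if and only if the map $\Gamma_v \to \Gal(E/F)$ is trivial, since $u$ is injective. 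This gives $(2) \Leftrightarrow (3)$ with no further input.

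For $(1) \Leftrightarrow (2)$: the image of $\Gamma_v \to \Gal(E/F)$ is the decomposition group of $v$ in the cyclic extension $E/F$; since $v$ is real, $\Gamma_v \cong \Gal(\BC/\BR) \cong \BZ/2$, so the image is either trivial or of order $2$. First I would unwind the meaning of $E_v := E \otimes_F F_v$: it is a product $\prod_{w \mid v} E_w$ over places $w$ of $E$ above $v$, and since $\Gal(E/F)$ acts transitively on $\{w \mid v\}$ with stabilizer the decomposition group $\Gamma_w \cong \Gamma_v \to \Gal(E/F)$, the number of factors is $[\Gal(E/F) : \mathrm{image}]$ and each $E_w$ is $\BR$ or $\BC$ according to whether that image is trivial or of order $2$. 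So if the map $\Gamma_v \to \Gal(E/F)$ is nontrivial, every $E_w \cong \BC$, there are $[E:F]/2$ of them, forcing $[E:F]$ even, say $=2d$, and $E_v \cong \BC^d$; conversely if $E_v \cong \BC^d$ then at least one (hence, by transitivity, every) $E_w \cong \BC$, so the decomposition group is nontrivial, i.e. $\Gamma_v \to \Gal(E/F)$ is nontrivial. This establishes $(1) \Leftrightarrow (2)$.

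The only genuinely delicate point is the bookkeeping that ``$E/F$ unramified everywhere'' (a standing hypothesis on the fields $E$ appearing from elliptic endoscopy for $\SL_n$, cf.\ the discussion after Proposition~\ref{ell-endo} and the statement of Theorem~\ref{thm-main}) is compatible with, and indeed consistent with, a real place of $F$ becoming complex in $E$ --- ``unramified'' at an archimedean place here means precisely that no real place becomes complex would be the naive reading, but in this paper's conventions (following Kottwitz's normalization of endoscopic data, where $s$ must lift to $Z(\wh{H})^{\Gamma_v}$ for all $v$) the archimedean condition is automatically satisfied and does \emph{not} forbid $\BR \rightsquigarrow \BC$; I would include a one-line remark clarifying that the content of Proposition~\ref{prop-arch-kappa} is exactly to identify \emph{which} real places of $F$ contribute a nontrivial local component, and that these are governed solely by the splitting behavior in $E$. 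I expect the transitivity-of-Galois-action argument and the identification $\pi_0(\Gamma_v) = \Gamma_v$ for $v$ real to be routine; the main obstacle, such as it is, is stating the archimedean normalization cleanly so that it does not clash with the word ``unramified'' used elsewhere in the paper.
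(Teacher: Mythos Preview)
Your argument for $(1)\Leftrightarrow(2)\Leftrightarrow(3)$ is correct and essentially identical to the paper's: you make $(2)\Leftrightarrow(3)$ explicit via the injectivity of $u$ (the paper just says ``follows directly from definition''), and for $(1)\Leftrightarrow(2)$ you use the transitive Galois action on places above $v$ exactly as the paper does.

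Your final paragraph, however, is based on a misreading. There is no standing hypothesis in Proposition~\ref{prop-arch-kappa} that $E/F$ be unramified everywhere; the fields $E$ arising from Proposition~\ref{ell-endo} are merely cyclic over $F$. The unramifiedness condition in Theorem~\ref{thm-main} enters \emph{later}, precisely because Proposition~\ref{arch} uses Proposition~\ref{prop-arch-kappa} to show that only those $E$ with $\kappa_\infty=1$ contribute --- and ``$E/F$ unramified at $v$'' in this paper \emph{does} mean no real place becomes complex (contrary to what you assert). So the proposition is not in tension with the word ``unramified''; it is what \emph{furnishes} the archimedean part of that condition. Delete the last paragraph and the proof is clean.
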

\begin{proof}
The equivalence between (2) and (3) follows directly from the definition.
The implication from
(1) to (2) follows from the fact that the nontrivial element $\tau\in \Gamma_v$ acts by complex conjugation on the simple factor $\BC$.
From (2) to (1) we note that the non-triviality of $\tau$
in $\Gal(E/F)$ implies that $E_v$ contains at least one simple factor isomorphic to $\BC$.
Since the Galois group $\Gal(E/F)$ acts transitively on the set $\Hom_{F_v}(E_v, \overline{F}_v)$, (1) holds.
\end{proof}

In the following, for a real place $v$, we study $G(F_v)$-conjugacy classes in the stable conjugacy class of a regular semisimple element $\gamma$ in $\wt{G}(F_v)$.

\begin{prop}\label{prop: archi-kappa}
 Let $K_v=F_v[\gamma]$. Then the stable conjugacy class of $\gamma$ contains one or two $G(F_v)$-conjugacy classes. Moreover, it contains two  conjugacy classes if and only if $n=2k$ is even  and $K_v\cong \BC^k$.
\end{prop}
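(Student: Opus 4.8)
The plan is to translate the statement into Galois cohomology and then compute the relevant group explicitly, using that the centralizer of a regular semisimple $\gamma$ in $G=\SL_n$ is the norm-one torus $T_\gamma=\Res^1_{K_v/F_v}\Gm$.

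First I would invoke the standard parametrization of the rational classes inside a stable class: since $\gamma$ is regular semisimple, $T_\gamma$ is a maximal torus of $G$, and the cocycle recipe $g\gamma g^{-1}\mapsto[\sigma\mapsto g^{-1}\sigma(g)]$ (with $g\in G(\bar F_v)$) induces a bijection between the set of $G(F_v)$-conjugacy classes in the stable conjugacy class of $\gamma$ and $\ker\!\big(H^1(F_v,T_\gamma)\to H^1(F_v,G)\big)$. From the exact sequence $1\to\SL_n\to\GL_n\xrightarrow{\det}\Gm\to 1$, the vanishing $H^1(F_v,\GL_n)=1$ (Hilbert 90), and the surjectivity of $\det$ on $F_v$-points, one gets $H^1(F_v,\SL_n)=1$; hence the set we want is in bijection with all of $H^1(F_v,T_\gamma)$, and it remains to compute this group for $F_v=\BR$.

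Next I would run the norm exact sequence $1\to T_\gamma\to\Res_{K_v/\BR}\Gm\xrightarrow{\Nm}\Gm\to1$, which yields the exact sequence
\[
K_v^\times\;\xrightarrow{\ \Nm_{K_v/\BR}\ }\;\BR^\times\;\longrightarrow\;H^1(\BR,T_\gamma)\;\longrightarrow\;H^1(\BR,\Res_{K_v/\BR}\Gm).
\]
By Shapiro's lemma the last term is $H^1(K_v,\Gm)$, which vanishes by Hilbert 90 applied to each simple factor ($\BR$ or $\BC$) of $K_v$. Hence $H^1(\BR,T_\gamma)\cong\BR^\times/\Nm_{K_v/\BR}(K_v^\times)$. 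Writing $K_v\cong\BR^a\times\BC^b$ with $a+2b=n$, the algebra norm $\Nm_{K_v/\BR}(r_1,\dots,r_a,z_1,\dots,z_b)=\big(\prod_i r_i\big)\big(\prod_j z_j\bar z_j\big)$ has image $\BR^\times$ when $a\ge1$ and image $\BR_{>0}$ when $a=0$. Therefore $H^1(\BR,T_\gamma)$ is trivial whenever $K_v$ has a real factor, and equals $\BZ/2\BZ$ exactly when $a=0$, i.e.\ when $n=2b$ and $K_v\cong\BC^{b}$. Taking $k=b$ gives the statement.

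I do not anticipate a genuine obstacle; the argument is short once the cohomological reformulation is in place. The two points deserving care are the first step — that regular semisimplicity makes $Z_G(\gamma)=T_\gamma$, so the cocycle map really is a bijection onto $\ker(H^1(F_v,T_\gamma)\to H^1(F_v,G))$, together with the vanishing $H^1(\BR,\SL_n)=1$, which is special to the split form and is not a formal consequence of simple connectedness — and the explicit norm computation for the archimedean \'etale algebra $K_v$. As a consistency check, the dichotomy matches the archimedean analysis of endoscopic characters: nontriviality of $H^1(\BR,T_\gamma)$ corresponds, via Tate--Nakayama duality, to the existence of a nontrivial local component $\kappa_v$, which by Proposition \ref{prop-arch-kappa} is precisely the case where every place of $K$ above $v$ is complex.
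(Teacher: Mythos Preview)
Your proof is correct and takes a genuinely different route from the paper's. Both arguments begin with the same cohomological reformulation, identifying the set of $G(F_v)$-classes inside the stable class with $\ker\bigl(H^1(F_v,T_\gamma)\to H^1(F_v,G)\bigr)$ and then reducing to the computation of $H^1(\BR,T_\gamma)$. From there the two approaches diverge: the paper passes to the dual side via Tate--Nakayama, writing $H^1(\BR,T_\gamma)\cong\pi_0\bigl(\widehat{T_\gamma}^{\Gamma_v}\bigr)$ and computing the $\Gamma_v$-fixed points of $(\BC^\times)^n/\Delta(\BC^\times)$ explicitly in the two cases $r_1>0$ and $r_1=0$. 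You instead stay on the torus side, using the norm exact sequence $1\to T_\gamma\to\Res_{K_v/\BR}\Gm\to\Gm\to1$ together with Shapiro and Hilbert~90 to get $H^1(\BR,T_\gamma)\cong\BR^\times/\Nm_{K_v/\BR}(K_v^\times)$, and then read off the answer from the image of the norm. Your argument is shorter and more elementary; the paper's has the advantage of fitting seamlessly with the dual-side description of $\kappa$ used elsewhere. It is also worth noting that your derivation of $H^1(\BR,\SL_n)=1$ from the sequence $1\to\SL_n\to\GL_n\to\Gm\to1$ is more careful than the paper's appeal to simple connectedness, which by itself is not enough over~$\BR$.
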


\begin{proof}
Note that in general we have 
$K_v\cong \BR^{r_1}\times \BC^{r_2}$ with $r_1+2r_2=n$. In particular, the centralizer of $\gamma$ in $G(F_v)$ is given by
\[
T=K_v^1 = \{ x \in K_v^\times \Big| \RN_{K_v/F_v} x = 1 \}.\]

Denote by  $\Gamma_v= \{1, \tau\}$
with $\tau$ the complex conjugation. We have 
\[
\widehat{T}=(\BC^{\times})^n/\Delta(\BC^{\times})
\]
where $\Delta: \BC^\times \rightarrow (\BC^{\times})^n$ is the diagonal embedding.

Assume $r_1>0$. We can assume that
$\Gamma_v$ acts trivially on the first copy of $\BC^\times$. Then we have a $\Gamma_v$-equivariant isomorphism 
\[
\widehat{T}\cong (\BC^\times)^{n-1}
\]
by identifying $(\BC^\times)^{n-1}$ with the last $n-1$ copies of $\BC^\times$. This shows that 
\[
H^1(F_v,T)\cong\pi_0\left(\widehat{T}^{\Gamma_v}\right)\cong \pi_0\left(((\BC^\times)^{n-1})^{\Gamma_v}\right)=\{1\}.
\]

Assume $r_1=0$.
We arrange the action of $\Gamma_v$ on 
$\widehat{T}$ which
permutes the $i$-th and the $(i+1)$-th copies of $\BC^\times $ for all odd $i$. In particular, for 
\[
\begin{aligned}
\tau((1, z_2, z_3, \ldots, z_{n})) &=(z_2, 1, z_4, z_3, \cdots, z_{n}, z_{n-1})  \\
&\sim (1, z_2^{-1}, z_4z_2^{-1}, z_3z_2^{-1}, \cdots, z_{n}z_2^{-1}, z_{n-1}z_2^{-1})\in \widehat{T}^{\Gamma_v}.
\end{aligned}
\]
We have 
\[
z_2=z_2^{-1}, z_3=z_4z_2^{-1}, z_4=z_3z_2^{-1}, \ldots .
\]
This implies 
\[
\pi_0\left(\widehat{T}^{\Gamma_v}\right)\simeq \{\pm 1\}.
\]

Finally, it follows from \cite[\S 1.5.1]{NGO10}
the conjugacy classes within a stable conjugacy class are parametrized by 
\[
\ker(H^1(F_v,T)\rightarrow H^1(F_v,G)).
\]
In our case by simply-connectedness, $H^1(F_v,G)=1$ and by the Tate-Nakayama duality
\[
H^1(F_v,T)\cong \pi_0\left(\widehat{T}^{\Gamma_v}\right)\simeq \{\pm 1\}.
\]
\end{proof}

\section{Haar measures}\label{Haar}

It is important for us to normalize the Haar measures.

Let $F$ be a number field. For each place $v$, we normalize the absolute value $|\cdot|_v$ on $F_v$ as follows. If $F_v = \BR$ 
(resp. $F_v = \BC$), then it is (resp. square of) the usual one. If $F_v$ is nonarchimedean, it maps the uniformizer to $q_v^{-1}$ with
$q_v$ the cardinality of the residue field. 

We normalize the Haar measure $dx_v$ on $F_v$ as
following:
\begin{itemize}
  \item if $F_v = \BR$, then $dx_v$ is the usual
  Lebesgue measure;
  \item if $F_v = \BC$, then $dx_v$ is twice of
  the usual Lebesgue measure;
  \item if $v$ is nonarchimedean, then
  $dx_v$ is normalized so that the integral ring
  $\CO_v$ of $F_v$ has volume one.
\end{itemize}
We have $d(ax_v) = |a|_v dx_v$ for any $a \in F_v^\times$.

We denote by $\BA = \BA_F$ the ring of adeles. Denote by $\BA_f = \prod_{v <\infty}' F_v$  and  $\wh{\CO} = \prod_{v < \infty} \CO_v$. 
By tensor products, we obtain a global absolute value $|\cdot|$ on 
$\BA$ and a Haar measure $dx$ on $\BA$.

Let $G$ be a connected reductive group over a number field $F$. We shall consider two kinds of Haar measures
on $G(\BA)$.

The first Haar measure on $G(\BA)$ we shall consider is the Tamagawa measure. 
Consider the representation $\rho_G$ of $\Gamma_F = \Gal(\bar{F}/F)$ on $X^*(G)_\BQ$, the group of $F$-characters of $G$. Denote by 
\[L(s,\rho_G) = \prod_{v < \infty} L_v(s,\rho_G)\] 
the Artin $L$-function associated to $\rho_G$ and
\[r_G = \lim_{s \ra 1} (s-1)^{\rank X^*(G)} L(s,\rho_G).\]

We fix an invariant  gauge form $\omega_G$ on $G$. In other words, $\omega_G$ is a nowhere-zero and regular differential form
in $\Gamma(G, \bigwedge^{\dim G} \Omega_{G/F})$ 
which is invariant under the translation of $G$ (note that $G$ is unimodular). For each place $v$ of $F$, 
denote by $|\omega_G|_v$ the Haar measure on $G(F_v)$ induced from $\omega_G$ and the Haar measure $dx_v$ on $F_v$. 

The Tamagawa measure $d^\Tam g$  on $G(\BA)$ is defined as follows
\[d^\Tam g = \frac{1}{r_G |\Delta_F|^{\frac{\dim G}{2}}} d^\Tam g_\infty d^\Tam g_\fin, \quad d^\Tam g_\infty = \prod_{v|\infty} |\omega_G|_v, 
\quad  d^\Tam g_\fin = \prod_{v < \infty} L_v(1,\rho_G) |\omega_G|_v.\]
Then $d^\Tam g$ is independent of the choice of the gauge form $\omega_G$.

We shall also consider the Tamagawa measure on a homogeneous space. Let $H$ be a reductive subgroup of $G$ and $X = H \bs G$. Let
$\omega_G$ be an invariant gauge form on $G$, $\omega_H$ be an invariant gauge form on $H$ and $\omega_X$ be a $G$-invariant gauge
form on $X$. We assume that the three forms $\omega_G$, $\omega_H$ and $\omega_X$ match together algebraically in the sense
of \cite[Section 2.4]{Wei82}. In particular, for any place $v$,
\[\int_{G(F_v)} f(g)|\omega_G|_v = \int_{H(F_v) \bs G(F_v)}\int_{H(F_v)} f(hg)|\omega_H|_v |\omega_X|_v, \quad f \in C_c(G(F_v)).\]
Here, we identify $H(F_v) \bs G(F_v)$ as an open subset of $X(F_v)$.

The Tamagawa measure $d^\Tam \mu$  on $X(\BA)$ is defined as follows
\[d^\Tam \mu = \frac{r_H |\Delta_F|^{\frac{\dim H}{2}}}{r_G |\Delta_F|^{\frac{\dim G}{2}}} 
d^\Tam \mu_\infty d^\Tam \mu_\fin, \quad  d^\Tam \mu_\infty = \prod_{v|\infty} |\omega_X|_v, 
\quad  d^\Tam \mu_\fin = \prod_{v < \infty} \frac{L_v(1,\rho_G)}{L_v(1,\rho_H)} |\omega_X|_v.\]

In this paper, we shall also consider another Haar measure on $G(\BA)$ which is convenient for
the study of local orbital integrals. 

For each nonarchimedean place $v$, we fix a maximal open compact subgroup 
$U_v$ of $G(F_v)$. Consider the following measure on $G(\BA)$
\[d^0 g = \prod_v d^0 g_v.\]
Here, 
\begin{itemize}
\item if $v|\infty$, $d^0g_v$ is the local Haar measure on $G(F_v)$ given in \cite[Section 11]{Gr97}. It depends only on $G_v$ and the Haar measure $dx_v$ on $F_v$.
\item if $v <\infty$, $d^0 g_v$ is normalized so that the volume of $U_v$ is $1$.
\end{itemize}

If $X = H \bs G$ is an affine homogeneous space with $X(\BA) = H(\BA) \bs G(\BA)$, 
then we denote by $d^0\mu$ the quotient measure on $X(\BA)$ given by $d^0g$ on $G(\BA)$ and $d^0 h$ on $H(\BA)$.

Now, we compare the Tamagawa measure $d^\Tam g$ and another measure $d^0 g$ for a special case. 
Let $\gamma \in \GL_n(F)$ be regular semisimple. Then the stabilizer of $\gamma$ in $\GL_n$ is
isomorphic to $K^\times$ where 
\[K = F[\gamma] = \prod_{i=1}^r K_i\] 
is a separable extension of $F$ and $K_i/F$ are extensions of number fields.

Denote by $d^\Tam \mu, d^0\mu$ the quotient measures on $(K^\times \bs \GL_n)(\BA) = K_\BA^\times \bs \GL_n(\BA)$
given by the above two measures on $\GL_n(\BA)$ and $K_\BA^\times$ respectively. Note that in this special case, for each nonarchimedean $v$, 
the maximal open compact subgroups of $\GL_n(F_v)$ and $K_v^\times$ are unique up to conjugation.
In particular, the definition of $d^0\mu$ is independent of the choice of maximal open compact subgroups. Denote by
\[C^\Tam_0 = \frac{d^\Tam \mu}{d^0\mu}.\]

\begin{prop}\label{meas-comp}
	We have 
	\[C^\Tam_0 = \frac{1}{|\Delta_F|^{n(n-1)/2}} \cdot \frac{\lim_{s\ra 1} (s-1)^r\zeta_K(s)}{\Res_{s=1}\zeta_F(s) \prod_{i=2}^n \zeta_F(i)}
	\cdot \frac{1}{\sqrt{|\Delta_{K/F}|_\fin}}.\]
	Here, $\Delta_{K/F}$ is the relative discriminant of $K/F$ and we denote by $|\Delta_{K/F}|_\fin = \prod_{v < \infty}
	|\Delta_{K/F}|_v$.
\end{prop}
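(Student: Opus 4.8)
The plan is to split the statement into two independent measure comparisons and then divide. Since $d^{\Tam}g$ and $d^{0}g$ are both Haar measures on $\GL_n(\BA)$, they differ by a constant $a$, and similarly $d^{\Tam}t = b\,d^{0}t$ on $K_\BA^\times = \wt T(\BA)$, where $\wt T = \Res_{K/F}\Gm$ is the stabilizer of $\gamma$. Disintegrating both pairs along $\GL_n(\BA)\to K_\BA^\times\backslash\GL_n(\BA)$ gives $d^{\Tam}\mu = (a/b)\,d^{0}\mu$, so $C = a/b$. Thus it suffices to compute $a$ and $b$, and for each I would work one place at a time, comparing $\tfrac{1}{\lambda_v^{G}}|\omega_G|_v$ at finite $v$ against the measure normalizing the maximal compact, and $|\omega_G|_v$ at $v\mid\infty$ against Gross's canonical measure $d^0g_v$ of \cite[\S 11]{Gr97}.

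For $a$: $X^*(\GL_n)=\BZ$ with trivial Galois action, so $L(s,\rho_{\GL_n})=\zeta_F(s)$, $r_{\GL_n}=\Res_{s=1}\zeta_F(s)$, and $\lambda_v^{\GL_n}=1-q_v^{-1}$. Reduction modulo $\fp_v$ gives $|\omega_{\GL_n}|_v\bigl(\GL_n(\CO_v)\bigr)=\vol_{dx_v}(\CO_v)^{n^2}\prod_{k=1}^{n}(1-q_v^{-k})$; dividing by $\lambda_v^{\GL_n}$ removes the $k=1$ term, and taking the product over $v<\infty$, using $\prod_{v<\infty}\vol_{dx_v}(\CO_v)=|\Delta_F|^{-1/2}$ and $\prod_{v<\infty}(1-q_v^{-k})^{-1}=\zeta_F(k)$, yields $|\Delta_F|^{-n^2/2}\prod_{k=2}^{n}\zeta_F(k)^{-1}$ times the archimedean factor $\prod_{v\mid\infty}|\omega_{\GL_n}|_v/d^0g_v$. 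For $b$: $X^*(\wt T)$ is the permutation module, so $L(s,\rho_{\wt T})=\zeta_K(s)$, $r_{\wt T}=\Res_{s=1}\zeta_K(s)$, and $\lambda_v^{\wt T}=\prod_{w\mid v}(1-q_w^{-1})$. The same computation goes through, with the one new feature that the $F$-rational gauge form on $\Res_{K/F}\Gm$, written in coordinates adapted to an $\CO_{F_v}$-basis of $\CO_{K_v}=\prod_{w\mid v}\CO_{K_w}$, makes $|\omega_{\wt T}|_v\bigl(\prod_{w\mid v}\CO_{K_w}^\times\bigr)$ pick up the extra factor $|\disc(\CO_{K_v}/\CO_{F_v})|_v^{1/2}=|\Delta_{K/F}|_v^{1/2}$ — this is precisely how the relative discriminant enters — so that, after dividing by $\lambda_v^{\wt T}$ and multiplying over $v<\infty$, one obtains $|\Delta_F|^{-n/2}\,|\Delta_{K/F}|_\fin^{1/2}$ together with the archimedean factor $\prod_{v\mid\infty}|\omega_{\wt T}|_v/d^0t_v$. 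Forming $C=a/b$, the prefactors contribute $|\Delta_F|^{-(\dim\GL_n-\dim\wt T)/2}=|\Delta_F|^{-n(n-1)/2}$, the $r_G$'s contribute $\Res_{s=1}\zeta_K(s)/\Res_{s=1}\zeta_F(s)$, the finite Euler factors contribute $\prod_{k=2}^n\zeta_F(k)^{-1}$, and the discriminant bookkeeping is closed up using $|\Delta_K|=N_{F/\BQ}(\Delta_{K/F})\cdot|\Delta_F|^{n}$, i.e. $|\Delta_{K/F}|_\fin=|\Delta_F|^{n}/|\Delta_K|$.

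The step I expect to be the real obstacle is the archimedean comparison: one must show that the ratio $\prod_{v\mid\infty}\bigl(|\omega_{\GL_n}|_v/d^0g_v\bigr)\big/\prod_{v\mid\infty}\bigl(|\omega_{\wt T}|_v/d^0t_v\bigr)$ contributes no extra factor beyond what is already accounted for, i.e. that the factors of $\pi$, of $2$, and of $|\Delta_F|^{1/2}$ concealed in the archimedean self-dual measures and in Gross's normalizations cancel exactly. This needs the explicit shape of the measures of \cite[\S 11]{Gr97}; the fact that the final answer involves only $\zeta$-values and discriminants (no $\pi$'s or $2$'s) is a reflection of these cancellations. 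A good calibration is the case $F=\BQ$, where $|\Delta_F|=1$ and $|\Delta_{K/F}|_\fin=|\Delta_K|^{-1}$, so the formula collapses to $C=\Res_{s=1}\zeta_K(s)\cdot\sqrt{|\Delta_K|}\,/\prod_{k=2}^{n}\zeta(k)$, which one can check directly against the archimedean normalizations; comparison with \cite{Yun} also provides a consistency check, since Proposition~\ref{meas-comp} is exactly the bridge between his measure conventions and the Tamagawa ones.
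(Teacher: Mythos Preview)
Your outline is correct and, at the finite places, essentially coincides with the paper's computation: the same Euler factors $\prod_{k=2}^n\zeta_F(k)^{-1}$, the same $|\Delta_F|^{-n(n-1)/2}$, and the same appearance of the relative discriminant from the change-of-basis on $\Res_{K/F}\Gm$. The organization differs only cosmetically (you compute on $\GL_n$ and $K^\times$ separately and divide; the paper works directly with the quotient $K_\BA^\times\backslash\GL_n(\BA)$).

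Where the two diverge is exactly the step you flag as the obstacle. You propose to verify the archimedean cancellation by unwinding the explicit formulas of \cite[\S 11]{Gr97} and checking that all the $\pi$'s and $2$'s drop out. The paper bypasses this entirely by inserting a third measure: the Gross--Gan canonical measure $dg=\prod_v dg_v$ of \cite{GG99}, which by construction agrees with your $d^0g_v$ at archimedean $v$. The key input is then the global product formula \cite[Proposition~9.3]{GG99},
\[
\prod_{v}\frac{dg_v}{|\omega_G|_v}=f(M_G)^{1/2},
\]
the product running over \emph{all} places. Because the archimedean ratios $|\omega_G|_v/d^0g_v=|\omega_G|_v/dg_v$ are absorbed into this global identity, one never has to compute them: applying the formula to $\GL_n$ and to $K^\times$ and taking the quotient gives $f(M_{K^\times})^{1/2}/f(M_{\GL_n})^{1/2}=|\Delta_{K/F}|_\fin^{-1/2}$ in one stroke. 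So the cancellation you hope for is a \emph{consequence} of Gross--Gan's formula rather than something to be checked by hand; once you know that reference, your archimedean ``obstacle'' disappears and the proof closes immediately.
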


The proof of this proposition is  given in \cite[Appendix A]{Lee}. As there is the restriction that $K$ is totally real, we repeat the proof here for the convenience of readers.

To compare the two Haar measures $d^\Tam \mu$ and $d^0 \mu$, we need to consider
a third Haar measure. It is the one  constructed by Gross \cite{Gr97} and Gross-Gan \cite{GG99}. Let $G$ be a connected reductive group defined over $F$. Denote by
\[d g = \prod_v d g_v\]
where $d g_v$ is the Haar measure given in \cite[Section 5]{GG99} for $v$ nonarchimedean and \cite[Section 11]{Gr97} 
for $v$ archimedean respectively. In particular, for $v|\infty$, $d g_v = d^0 g_v$. Note that for each $v$, $dg_v$ depends only on $G_v$ and the Haar measure $dx_v$ on $F_v$.
In particular, it is independent of any choice of invariant gauge form $\omega_G$ on $G$. However, we can compare $dg$ with $\prod_v |\omega_G|_v$ as follows.

\begin{prop}[Proposition 9.3 in \cite{GG99}]\label{prop-GG} For almost all $v$, 
	$d g_v = |\omega_G|_v$. Moreover, 
	\[\prod_v \frac{d g_v}{|\omega_G|_v} = f(M_G)^{1/2}\]
	where $f(M_G)$ is the global conductor of the motive $M_G$ of $G$.
\end{prop}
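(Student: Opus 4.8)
The statement is entirely local, so the plan is (i) to verify $dg_v=|\omega_G|_v$ at all but finitely many $v$, making the product finite, and (ii) to evaluate the remaining local ratios and match them with the local conductor factors. For (i) I would fix a finite set $S$ of places of $F$ containing every archimedean place together with every $v$ at which $G$ has bad reduction, at which $G$ fails to be unramified, or at which the chosen gauge form $\omega_G$ does not extend to a generator of $\bigwedge^{\dim G}\Omega_{\mathcal{G}/\CO_v}$ for a reductive $\CO_v$-model $\mathcal{G}$ of $G$. For $v\notin S$, Weil's formula for the volume of $\mathcal{G}(\CO_v)$ in terms of point counts gives $|\omega_G|_v(\mathcal{G}(\CO_v))=\#\mathcal{G}(k_v)/q_v^{\dim G}=\prod_d(1-q_v^{-d})=L_v(M_G^\vee(1))^{-1}$, where the $d$ are the degrees of the fundamental invariants of $G_{\bar F_v}$ weighted by the (unramified) Galois action, so that $M_G$ is unramified at such $v$. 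Since Gross's canonical measure at an unramified place is normalized by exactly this value on the hyperspecial maximal compact $\mathcal{G}(\CO_v)$, we get $dg_v=|\omega_G|_v$ for $v\notin S$; at the finitely many $v$ dividing the different, the rescaling of the self-dual $dx_v$ enters $dg_v$ and $|\omega_G|_v$ in the same way, so those places need not be added to $S$, and $S$ is genuinely finite.

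Granting this, $\prod_v dg_v/|\omega_G|_v=\prod_{v\in S}dg_v/|\omega_G|_v$ is a finite product, and the motivic conductor likewise factors as $f(M_G)=\prod_{v\in S}f_v(M_G)$ because $M_G$ is unramified outside $S$. It then suffices to prove the local identity $dg_v/|\omega_G|_v=f_v(M_G)^{1/2}$ for each $v\in S$. For a nonarchimedean $v\in S$, I would express Gross's canonical measure through a parahoric subgroup attached to a point of the Bruhat--Tits building of $G_{F_v}$ that carries a smooth $\CO_v$-model, compare its canonical volume with its gauge-form volume, and track the discrepancy; this discrepancy is governed by the Artin conductor exponent $a_v(M_G)$ of the Weil--Deligne representation $M_G$ at $v$, and it appears to the power $\tfrac{1}{2}$ because the comparison passes through the local functional equation of $L(M_G,s)$, whose $\varepsilon$-factor supplies $q_v^{a_v(M_G)/2}=f_v(M_G)^{1/2}$. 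For an archimedean $v\in S$, Gross's canonical measure is constructed in \cite[Section 11]{Gr97} from the Hodge realization of $M_G$ (its periods and $\Gamma$-factors), and comparing it with $|\omega_G|_v$ reproduces the archimedean factor $f_v(M_G)^{1/2}$ of the conductor. Multiplying these local identities over $v\in S$ then gives $\prod_v dg_v/|\omega_G|_v=f(M_G)^{1/2}$.

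The hard part is the nonarchimedean ramified case: isolating the exact power $\tfrac{1}{2}$ of the conductor requires the explicit description of Gross's canonical measure at a non-hyperspecial (or bad-reduction, or ramified) place together with the precise value of the associated local $\varepsilon$-factor, and this is the technical core of \cite{GG99}. Since Proposition \ref{prop-GG} is exactly \cite[Proposition 9.3]{GG99}, in the body of the paper we simply invoke it; the outline above records how the proof proceeds.
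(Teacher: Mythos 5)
The paper gives no proof of this proposition at all: it is stated verbatim as a citation of \cite[Proposition 9.3]{GG99}, which is exactly what your final paragraph does, so your approach coincides with the paper's. Your sketch of the underlying argument in \cite{GG99} is a reasonable account of that source (good-reduction places handled by Weil's point count and the normalization of the canonical measure, bad places by the Artin conductor entering through the square root of the discriminant/$\varepsilon$-factor), modulo the small imprecision that $f(M_G)$ has no archimedean factors, so the archimedean ratios are not literally ``$f_v(M_G)^{1/2}$'' but are what the period normalization of \cite[Section 11]{Gr97} makes them.
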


\begin{proof}[Proof of Proposition \ref{meas-comp}]

   By the definition of the Tamagawa measures
   \[C^\Tam_0 = \frac{\lim_{s \ra 1} (s-1)^r\zeta_K(s)}
   {\Res_{s=1} \zeta_F(s)} \cdot |\Delta_F|^{-\frac{n(n-1)}{2}} \prod_v \frac{d^\Tam \mu_v}{d^0 \mu_v}.\]
   For each $v$, we have
   \[\frac{d^\Tam \mu_v}{d^0 \mu_v} = \frac{d^\Tam \mu_v}{d \mu_v} \cdot \frac{d \mu_v}{d^0 \mu_v}.\]
   Here, for each $v$, $d\mu_v$ is the quotient measure
   on $K_v^\times \bs \GL_n(F_v)$ given by the local measure
   $dg_v$ on $\GL_n(F_v)$ and $dt_v$ on $K_v^\times$ ($dg_v$
   and $dt_v$ both are the third Haar measures).

   If $v < \infty$, then
   \[\frac{d^\Tam \mu_v}{d \mu_v} = \frac{\zeta_v(1)}{\zeta_{K_v}(1)} \cdot \frac{|\omega_{\GL_n}|_v/|\omega_{K^\times}|_v}{dg_v/dt_v}, \quad 
   \frac{d\mu_v}{d^0 \mu_v} = \frac{\zeta_{K_v}(1)}{\prod_{k=1}^n \zeta_v(k)}.\]
   Therefore, for $v < \infty$, 
   \[\frac{d^\Tam \mu_v}{d^0 \mu_v} = \frac{1}{\prod_{k=2}^n \zeta_v(k)} \frac{|\omega_{\GL_n}|_v/|\omega_{K^\times}|_v}{dg_v/dt_v}.\]
   On the other hand, if $v|\infty$, then
   \[\frac{d^\Tam \mu_v}{d^0 \mu_v} =  \frac{|\omega_{\GL_n}|_v/|\omega_{K^\times}|_v}{dg_v/dt_v}.\]
   Therefore, by Proposition \ref{prop-GG}
   \[
   \begin{aligned}
   \prod_v \frac{d^\Tam \mu_v}{d^0 \mu_v} &= 
   \frac{1}{\prod_{k=2}^n \zeta_F(k)} \cdot \prod_v \frac{|\omega_{\GL_n}|_v}{dg_v} \prod_v \frac{dt_v}{|\omega_{K^\times}|_v}\\
   &= \frac{1}{\prod_{k=2}^n \zeta_F(k)} \cdot \frac{f(M_{K^\times})^{1/2}}{f(M_{\GL_n})^{1/2}} \\
   &= \frac{1}{\prod_{k=2}^n \zeta_F(k)} \cdot \frac{1}{\sqrt{|\Delta_{K/F}|_\fin}}.
   \end{aligned}\]
   We are done.

\end{proof}

Finally, we focus on the volume of maximal compact subgroups of $\GL_n(F_v)$ with $v|\infty$. Denote by 
$U_\infty = \prod_{v|\infty} U_v$ with $U_v$ a maximal compact subgroup of 
$\GL_n(F_v)$. If $v$ is real, then $U_v = \RO_n(\BR)$ and if $v$ is complex, then $U_v = \RU(n)$ is the compact unitary group of rank $n$. 
The Haar measure on $U_v$ is normalized as follows.

Consider the case $F_v = \BR$ and we shall write simply $U_v$ by $U$. Assume
\[\chi(x) = (x-\lambda_1)(x-\ov{\lambda_1}) \cdots (x - \lambda_{r_2})(x-\ov{\lambda_{r_2}})(x-\mu_1)\cdots (x-\mu_{r_1}) \in \BR[x]\]
with $\lambda_k = x_k + iy_k$, $1 \leq k \leq r_2$ (and their complex conjugation) the complex roots and $\mu_j$, $1 \leq j \leq r_1$
the real roots and $r_1 + 2r_2 = n$. Up to the conjugation by $\GL_n(\BR)$, we may assume
\[\gamma = \diag\left[ 
\begin{pmatrix}
	x_1 & y_1 \\
	-y_1 & x_1
\end{pmatrix}, \cdots, 
\begin{pmatrix}
	x_{r_2} & y_{r_2} \\
	-y_{r_2} & x_{r_2}
\end{pmatrix}, 
\mu_1,\ldots,\mu_{r_1}
\right].\]
Then the stabilizer $S$ of $\gamma$ in $\GL_n(\BR)$ consists of
\[\diag\left[ t_1k(\theta_1),\ldots,t_{r_2}k(\theta_{r_2}),t_{1+r_2},\ldots,t_{r_1+r_2} \right], \quad t_i \in \BR^\times, \quad k(\theta) = 
\begin{pmatrix}
	\cos(\theta) &\sin(\theta) \\
	-\sin(\theta) &\cos(\theta)
\end{pmatrix}.\]
Then we have the following decomposition
\[\GL_n(\BR) = S A_0 N U.\]
Here, $A_0$ consists of
\[a = \diag\left[ a_1,a_1^{-1},\ldots,a_{r_2},a_{r_2}^{-1},1,\ldots,1 \right], \quad a_i > 0.\]
The unipotent group $N$ consists of $n = (n_{ij}) \in \GL_n(\BR)$ with
\[n_{ij} \in 
\begin{cases}
M_{2 \times 2}(\BR), \quad \text{if } i \leq r_2, j \leq r_2, \\
M_{1 \times 2}(\BR), \quad \text{if } i > r_2, j \leq r_2, \\
M_{2 \times 1}(\BR), \quad \text{if } i \leq r_2, j > r_2, \\
M_{1 \times 1}(\BR), \quad \text{if } i > r_2, j > r_2, \\
\end{cases}
\]
$n_{ij} = 0$ if $i < j$ and $n_{ii}$ is the identity matrix
for any $i$.

The Haar measure
on $U$ is normalized via the above decomposition as follows (see \cite[Equation (28)]{EMS})
\[\int_{S \bs \GL_n(\BR)} f(g)dg = \int_{A_0} \int_N \int_U f(ank)\left(\prod_{i=1}^{r_2} a_i\right)da_1\cdots da_{r_2} dn dk, \quad 
f \in C_c(S \bs \GL_n(\BR)).\]
Here, the measure on $S \bs \GL_n(\BR)$ is the above quotient measure $d^0\mu$ (the one before Proposition \ref{meas-comp}), $da_i$ are the Haar measure on $\BR$ and 
the isomorphism $N \cong \BR^{n(n-1)/2-r_2}$ gives the Haar measure on $N$. 

For the case $F_v = \BC$, our normalization of the measure on $U_v$ is exactly the same as the real case with $r_2=0$. We omit the details.

If $v$ is real, denote by  $\Gamma_v(s) = \pi^{-s/2}\Gamma(s/2)$ and if $v$ is complex, denote by $\Gamma_v(s) = 2(2\pi)^{-s}\Gamma(s)$.

\begin{prop}\label{prop-vol-arch}
	For any archimedean place $v$, we have 
	\[\Vol(U_v) = \Gamma_v(1)^n \prod_{i=1}^n \frac{1}{\Gamma_v(i)}.\]
\end{prop}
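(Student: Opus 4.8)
The plan is to pin down $\Vol(U_v)$ by feeding the integration formula of \cite[Eq.~(28)]{EMS} --- which is what defines the normalization of $dk$ on $U_v$ --- a suitable test function and evaluating both sides explicitly. Because Haar measure on $\GL_n(F_v)$ is not integrable near the divisor $\det g = 0$, I would use the regularized Gaussian $\phi_s(g) = e^{-\pi\,\tr(g^\ast g)}\,|\det g|_v^{s}$, where $g^\ast$ denotes the transpose if $F_v = \BR$ and the conjugate transpose if $F_v = \BC$, and $\Re s$ is large; this is the product of a Schwartz function on $M_n(F_v)$ with $|\det|_v^s$, is right $U_v$-invariant, and decays at the boundary.

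Since $\phi_s$ is right $U_v$-invariant and $|\det(san)|_v = |\det s|_v$ for $s \in S$, $a \in A_0$, $n \in N$, the quotient-measure relation for $S \bs \GL_n(F_v)$ together with formula (28) gives
\[
\int_{\GL_n(F_v)}\phi_s(g)\,d^0g_v \;=\; \Vol(U_v)\int_{A_0}\int_N\Big(\int_S e^{-\pi\,\tr((san)^\ast san)}\,|\det s|_v^{s}\,d^0t_v(s)\Big)\Big(\prod_{i=1}^{r_2}|a_i|\Big)\,da\,dn ,
\]
where for $F_v = \BC$ one takes $r_2 = 0$, so the $A_0$-integral drops out. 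Thus $\Vol(U_v)$ is the quotient of the left-hand side by the triple integral on the right, which in particular must be independent of $s$.

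The left-hand side is the archimedean Godement-Jacquet zeta integral attached to the Gaussian: using the explicit comparison of Gross's measure $d^0g_v$ with Lebesgue measure on $F_v^{n^2}$ --- available from Gross's construction, or read off from the archimedean terms in the proof of Proposition~\ref{meas-comp}; this is the step that produces the $\Gamma_v$-factors of the motive of $\GL_n$ and the normalization of $dx_v$ --- it equals a constant times $\prod_{k=0}^{n-1}\Gamma_v(s + c_k)$ for explicit shifts $c_k$. For the right-hand side one uses the concrete description of $S$, $A_0$, $N$ recorded before the statement: $S$ is a product of copies of $\BR^\times$ (one per real root of $\chi$), of $\BR^\times \times \SO(2)$ (one per conjugate pair of complex roots of $\chi$), and, in the complex-place case, of $\BC^\times$; $A_0 \cong (\BR_{>0})^{r_2}$; and $N \cong F_v^{d-r_2}$ in the stated block form. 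Diagonalizing the positive quadratic form $\tr((san)^\ast san)$ and integrating block by block, the $\BR^\times$- and $\BC^\times$-directions each produce a $\Gamma_v(s + \cdot)$, the compact $\SO(2)$-directions contribute their already-normalized volumes, and the unipotent directions contribute $s$-independent Gaussians; so the triple integral is a constant times $\prod_j \Gamma_v(s + c_j')$ with the same multiset of shifts, and the $\Gamma_v$-factors cancel in the quotient, leaving only powers of $\pi$, powers of $2$, and factorials.

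Regrouping these constants, and using $\Gamma_v(1) = 1$ when $F_v = \BR$ and $\Gamma_v(1) = 1/\pi$ when $F_v = \BC$, the quotient collapses to $\Gamma_v(1)^n \prod_{i=1}^n \Gamma_v(i)^{-1}$. The main difficulty is entirely the bookkeeping of constants: getting the ratio of $d^0g_v$ and of $d^0t_v$ to Lebesgue measure exactly right --- including the $\Gamma_v$-factors of the motives of $\GL_n$ and of $K^\times$, and the factor relating $dx_v$ on $\BC$ to ordinary Lebesgue measure --- and then matching the Jacobian $\prod_i |a_i|$ in (28) against the substitution that diagonalizes $\tr((san)^\ast san)$ over $A_0 \times N \times S$. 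The real place with complex roots is the fussiest case, since the factors $\BR^\times \times \SO(2)$ of $S$ and the $2 \times 2$-block structure of $N$ must be organized so that the $\SO(2)$-volumes cancel against the $A_0$-integral. As an independent check --- and as an alternative route avoiding this bookkeeping --- one may instead induct on $n$, using that $U_v$ acts transitively on the unit sphere of $F_v^n$ with point-stabilizer the rank-$(n-1)$ maximal compact, once the sphere volume induced by (28) has been identified.
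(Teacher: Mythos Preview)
Your route is genuinely different from the paper's, and it is not carried to completion. For the complex place (the only case argued in detail; for real $v$ the paper simply cites \cite{Sha} and \cite[Lemma~A.2]{Lee}) the paper does not use any Gaussian test function. Instead it takes a \emph{left} $U_v$-invariant $f$ and compares the $\bar N A N$ and $UAN$ integration formulas to obtain the Gindikin--Karpelevich identity
\[
\Vol(U_v)\;=\;\int_{\bar N} e^{-2\rho\log a(\bar n)}\,d\bar n
\;=\;\prod_{\alpha>0}\frac{\Gamma_v(\rho(H_\alpha))}{\Gamma_v(\rho(H_\alpha)+1)},
\]
the rank-one factorization coming from the standard $\SL_2$ computation. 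A two-line telescoping induction on $n$ then rewrites this product as $\Gamma_v(1)^n\prod_{i=1}^n\Gamma_v(i)^{-1}$. The whole argument is short precisely because the G--K integral has a known closed form over each root.

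Your Godement--Jacquet approach is sound in outline --- the displayed identity expressing $\Vol(U_v)$ as the ratio of the Gaussian zeta integral to the $S A_0 N$-integral is correct --- but as written it is a plan rather than a proof. You yourself identify the entire content as ``bookkeeping of constants'': the comparison of $d^0g_v$ and $d^0t_v$ with Lebesgue measure via Gross's archimedean $L$-factors, the diagonalization of $\tr((san)^\ast san)$ in the mixed $S A_0 N$ coordinates, and the matching of the Jacobian $\prod|a_i|$. None of this is done; the sentence ``regrouping these constants\ldots the quotient collapses to $\Gamma_v(1)^n\prod_i\Gamma_v(i)^{-1}$'' is the statement of the proposition, not its proof. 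The real case with $r_2>0$ is especially underspecified, since the $\SO(2)$-blocks in $S$ and the $2\times2$-block structure of $N$ and $A_0$ do not separate cleanly under the quadratic form, and you give no mechanism for the claimed cancellation of the $\SO(2)$-volumes against the $A_0$-integral. The G--K approach sidesteps all of this: only the root data enter, and the real case follows from the same rank-one computation.

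Your alternative suggestion (induction via the transitive $U_n$-action on the unit sphere) would also work and is closer in spirit to the paper's telescoping product, but it too requires identifying the sphere measure induced by formula~(28), which is again the bookkeeping you have deferred.
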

\begin{proof}
	When $v$ is real, we refer to \cite{Sha}  for the proof.

	In the following, we shall focus on the case $v$ is complex. The proof is classical and is reduced to the study of the so-called
	integrals of Gindikin-Karpelevich. We shall follow \cite[Section 12]{Vos}.

	We denote by $G = \GL_n(\BC)$, $A$ the diagonal torus of $G$ and $N$ (resp. $\bar{N}$) the subgroup of unipotent upper (resp. lower) 
	triangular matrices in $G$, $U = \RU(n)$ the maximal compact subgroup of $G$. 
	Denote by $\rho$ the half sum of positive roots of $A$ in $G$. Let $f \in C_c(G)$ be an arbitrary function on $G$ with compact
	support such that  $f(kg) = f(g)$ for any $k \in U$ and $g \in G$. We have
	\[\int_G f(g)dg = \int_{\bar{N}} \int_A \int_N f(\bar{n}an)e^{2\rho \log a} d\bar{n}dadn\]
	where
	\[e^{2\rho \log a} = |a_1^{n-1} \cdots a_n^{1-n}|, \quad a = \diag\left[ a_1,\ldots,a_n \right].\]
	For each $\bar{n} \in \ov{N}$, write $\bar{n} = k(\bar{n}) a(\bar{n})n(\bar{n})$ with $k(\bar{n}) \in U$, $a(\bar{n}) \in A$
	and $n(\bar{n}) \in N$. we have
	\[
	\begin{aligned}
	\int_G f(g)dg &= \int_{\ov{N}} \int_A \int_N f(an)e^{-2\rho \log a(\bar{n})} e^{2\rho \log a} d\bar{n} da dn \\
	&= \Vol(U)^{-1} \int_{\ov{N}} e^{-2\rho \log a(\bar{n})}d\bar{n} \cdot \int_U \int_A \int_N f(kan)e^{2\rho \log a} dkda dn \\
	&= \Vol(U)^{-1}\int_{\ov{N}} e^{-2\rho \log a(\bar{n})}d\bar{n} \cdot \int_G f(g)dg.
	\end{aligned}\]
	In particular, the volume of $U$ equals  the following integral of Gindikin-Karpelevich
	\[\Vol(U) = \int_{\ov{N}} e^{-2\rho \log a(\bar{n})}d\bar{n}.\]

	In the following, we shall use notations in \cite[Section 12]{Vos}. 
	Let $\alpha$ be a root of $A$ in $\ov{N}$. Consider the canonical morphism $\SL_2(\BC) \ra G$ which maps
	the group $\matrixx{1}{0}{z}{1}$, $z \in \BC$ onto $\ov{N}_\alpha$. For $\bar{n} = \matrixx{1}{0}{z}{1}$, 
	if we write
	\[\bar{n} = \matrixx{1}{n'}{0}{1}\matrixx{a}{0}{0}{b} k_2, \quad k_2 \in \RU(2),\]
	we have
	\[|ab| = 1, \quad |a| = (|z|+1)^{-1}.\]
	For any character $\lambda$ on $A$, denote by $\Phi_\lambda$ the function on $G$ characterized by
	\[\Phi_\lambda(nak) = a^{\lambda+\rho}, \quad n \in N, a \in A, k \in U.\]
	Then
	\[\int_{\ov{N}_\alpha} \Phi_\lambda(\bar{n})d\bar{n} = \int_\BC (|z|+1)^{-(\lambda(H_\alpha) + 1)}dz
	= \frac{\Gamma_\BC(\lambda(H_\alpha))}{\Gamma_\BC(\lambda(H_\alpha)+1)}.\]
	In the above, note that our measure $dz$ is twice of the usual Lebesgue measure. Following the argument in the proof of 
	\cite[Theorem 12.5]{Vos}, we obtain
	\[\int_{\ov{N}} e^{-2\rho \log a(\bar{n})}d\bar{n} = \prod_{\alpha > 0} \frac{\Gamma_\BC(\rho(H_\alpha))}{\Gamma_\BC(\rho(H_\alpha)+1)}.\]

	Finally, note that
	\[\prod_{\alpha > 0} \frac{\Gamma_\BC(\rho(H_\alpha))}{\Gamma_\BC(\rho(H_\alpha)+1)} = \prod_{k=1}^{n-1}
	\left(\frac{\Gamma_\BC(k)}{\Gamma_\BC(k+1)}\right)^{n-k}.\]
	By induction on $n$, we prove that
	\[\prod_{k=1}^{n-1}
	\left(\frac{\Gamma_\BC(k)}{\Gamma_\BC(k+1)}\right)^{n-k} = \Gamma_\BC(1)^n \prod_{k=1}^n \frac{1}{\Gamma_\BC(k)}.\]

\end{proof}

\section{A counting formula}\label{sec: counting-formula}

In this section, we generalize  \cite[Theorem 4.3]{WX} for elliptic orbits to arbitrary regular semisimple orbits.

Let $G$ be a connected reductive group over $F$. 
Let $X$ be a separated scheme of finite type over $\CO$ such that its generic fiber
is the homogeneous space $H \bs G$. Here, $H$ is the stabilizer of a fixed point $P \in X(F)$. 
As we shall count integral points on $X$, we assume that $X(\CO) \not= \emptyset$.

We shall focus on the case that $X$ is affine, or equivalently, $H$ is reductive. We fix an embedding 
\[X\hookrightarrow \Spec(F[x_1,\ldots, x_n]).\] 
For each $x=(x_1,\ldots, x_n)\in X(F_\infty)$, one defines
\[ ||x|| =\max_{v|\infty} ||x||_v, \quad ||x||_v = \left(\sum_{1\leq i\leq n} |x_i|_v^{2/e_v}\right)^{1/2}.\]
For $\CT>0$, one sets
\[
N(X, \CT)=\#\{x\in X(\CO): ||x|| \leq \CT\}, \quad  X(F_{\infty},\CT)=\{x\in X(F_\infty): ||x|| \leq \CT\}.\]

In the following, we shall focus on the case: $G$ is semisimple and simply connected and $H = T$ is a maximal torus in $G$. Moreover,
we assume that for any simple factor $G'$ of $G$, $G'(F_\infty)$ is not compact. 

For a rational point $x \in X(F)$, denote by $T_x$ the stabilizer of $x$ in $G$. 
We fix an invariant gauge form $\omega_G$ on $G$, an invariant gauge form $\omega_{T_x}$ on $T_x$ and a $G$-invariant gauge form
on $X$ match together algebraically (see Section \ref{Haar}). Denote by $\mu_{G,\infty} = |\omega_G|_\infty$, $\mu_{T_x,\infty} = 
|\omega_T|_\infty$ and $\mu_{X,\infty} = |\omega_X|_\infty$ the induced Tamagawa measures on $G(F_\infty)$, $T_x(F_\infty)$ and $X(F_\infty)$.
Similarly,  denote by $\mu_{G,f}$, $\mu_{T_x,f}$ and 
$\mu_{X,f}$ the induced Tamagawa measures on $G(\BA_f)$, $T_x(\BA_f)$ and $X(\BA_f)$. Write $\mu_G = \mu_{G,\infty} \otimes \mu_{G,f}$,
$\mu_{T_x} = \mu_{T_x,\infty} \otimes \mu_{T_x,f}$ and $\mu_X = \mu_{X,\infty} \otimes \mu_{X,f}$ the Haar measures on $G(\BA)$,
$T_x(\BA)$ and $X(\BA)$.

Denote by $r$ the split rank of $T_x$. It is independent of the choice of $F$-rational point $x$.
Choose a basis of characters $\chi_1,\dots,\chi_r\in X^\ast(T_x)$ and consider the map
\[
\log:T_x(\BA) \lra \BR^{r},\qquad t\mapsto \bigl(\log|\chi_1(t)|_\BA,\dots,\log|\chi_{r}(t)|_\BA \bigr).
\]
Denote by $T_x(\BA)^1$ the kernel of the above $\log$ map. We choose the Haar measure $\mu^1_{T_x}$ on $T_x(\BA)^1$ such that
the quotient of $\mu_{T_x}$ by $\mu^1_{T_x}$ is the Lebesgue measure on $\BR^r$ along the above $\log$ map.

Denote by $T_x(F_\infty)^1 = T_x(\BA)^1 \cap T_x(F_\infty)$. Similar to the above, we choose the Haar measure 
$\mu^1_{T_x,\infty}$ on $T_x(F_\infty)^1$ such that
the quotient of $\mu_{T_x,\infty}$ by $\mu^1_{T_x,\infty}$ is the Lebesgue measure on $\BR^r$ along the above $\log$ map.

We assume that the following equi-distribution property holds for $X = T \bs G$:

\begin{assumption}\label{ass: equi-property}
There is a constant $v_T > 0$ such that for any arithmetic subgroup $\Gamma \subset G(F_\infty)$ and any $x \in X(F)$,
\[
\#\{y\in x\cdot\Gamma: ||y|| \leq \CT\}\sim v_T \frac{ \mu^1_{T_x,\infty}((T_x(F)\cap \Gamma)\bs T_x(F_\infty)^1)}
{\mu_{G,\infty}(\Gamma\bs G(F_\infty))}
\mu_{X, \infty}(x\cdot G(F_\infty)\cap X(F_{\infty},\CT))(\log \CT)^r. 
\]
Here, $T_x$ is the stabilizer of $x$ in $G$. We also simply write $\mu_{G,\infty}(\Gamma \bs G(F_\infty))$ for the volume
of $\Gamma \bs G(F_\infty)$ with respect to the measure $\mu_{G,\infty}$, which is always finite when $G$ is semisimple (cf. \cite[Corollary 5]{SC07}). The same convention will be adopted for other cases.
\end{assumption}

When $T$ is elliptic, the equi-distribution property holds by \cite{EMS}. We shall prove that this holds for $G = \SL_n$
in Theorem \ref{thm: equi-distribution} as a refinement of a result of Zhang.

The goal of this section is to prove the following result.

\begin{thm}\label{thm-wx-general}
We have
\[N(X,\CT) \sim  v_T \#\Pic(T) \int_{X(\BA)^{\Br(X)}} f_{X}^\CT(x) dx (\log \CT)^r.\]
Here,
\begin{itemize}
	\item $\Br(X) = H^2_\et(X,\BG_m)$ is the Brauer group of $X$.
	\item Denote by
	     \[X(\BA)^{\Br(X)} = \bigcap_{\xi \in \Br(X)} X(\BA)^\xi, \quad X(\BA)^\xi = \left\{ x \in X(\BA) \Big| \sum_v \xi(x_v) = 0 \right\}.\]
	    Here, for each $\xi \in \Br(X)$ and $x_v \in X(F_v)$, the Brauer evaluation $\xi(x_v)$  is the image of $\xi$ under 
	    the following morphism
	    \[\Br(X) \stackrel{\Br(x_v)}{\lra} \Br(F_v) \stackrel{\inv_v}{\lra} \BQ/\BZ\]
         where $\inv_v: \Br(F_v)\rightarrow \BQ/\BZ$ is defined by \cite[Theorem 1.5.34]{Poo08}.
	\item The function $f_X^\CT$ on $X(\BA)$ is the characteristic function on $X(F_\infty,\CT) \times X(\wh{\CO})$.
	\item The measure $dx$ is given by the Tamagawa measure on $X(\BA)$ (see Section \ref{Haar}).
\end{itemize}
\end{thm}

We now give the proof of Theorem \ref{thm-wx-general}. It is 
similar to the elliptic case.

Let $S$ be a finite set of places of $F$ containing the archimedean places. Denote by $\CO_S$ the $S$-integers of $\CO$, that is,
those elements in $F$ which are integral outside $S$. We assume that $S$ is large enough such that 
\begin{itemize}
	\item the groups $G,T$ admit integral models over $\CO_S$ (that is, a group scheme of finite type over $\CO_S$ whose generic
		fibre is the given group over $F$) such that
		\[X \otimes_\CO \CO_S \cong T \bs G.\]
	\item the fixed point $P \in X(\CO_S)$. 
\end{itemize}
For any finite $v \in S$, we also fix an integral model of $G_v$ over $\CO_v$. 

Similar to the elliptic case (see \cite[Section 2]{WX}), we introduce the following  subgroup of $G(\BA)$. 
For any place $v$, 
\[\St_v = 
\begin{cases}
    G(\CO_v), \quad &v \not\in S; \\
    \left\{ g \in G(\CO_v): X(\CO_v) = X(\CO_v) \cdot g \right\}, \quad &v \in S \text{ and } v \nmid \infty; \\
    G(F_v), \quad &v | \infty.
\end{cases}\]
Denote by 
\[\St = \prod_v \St_v.\]
Then $\St$ is an open subgroup of $G(\BA)$. By the strong approximation property for $G$ (see \cite[Theorem 7.12]{PR98}), 
we have $G(\BA) = G(F)\St$.

For any $x,y \in X(\CO)$, consider the following equivalence relation
\[ x \sim_{\St} y \text{ if and only if } x = y\cdot u\]
for some $u \in \St$. The set of equivalent classes in $X(\CO)$ is denoted by $X(\CO)/\sim_{\St}$.

\begin{prop}[Proposition 2.9 in \cite{WX}]\label{prop: identify-to-Br-set}
The diagonal map
\[X(\CO)/\sim_{\St} \stackrel{\sim}{\lra} 
X(\wh{\CO})^{\Br(X)}/\St. \]
is bijective.
\end{prop}

Let $x \in X(\CO)$. For any $y,z \in x\St \cap X(\CO)$, consider the following equivalence relation
\[ y \sim_{G(F)} z \text{ if and only if } y = z\cdot \gamma\]
for some $\gamma \in G(F)$. The set of equivalent classes in $x\St \cap X(\CO)$ is denoted by $x\St \cap X(\CO)/\sim_{G(F)}$.

\begin{prop}[Proposition 2.11 in \cite{WX}]\label{prop:Shafarevich}
We have 
\[\#\left(x\St \cap X(\CO)/\sim_{G(F)}\right) = \#\Sha(T).\]
Here, the Shafarevich group $\Sha(T)$ of $T$ is defined as
\[\Sha(T) = \ker\left( H^1(F,T) \lra \prod_v H^1(F_v,T) \right).\]
\end{prop}

Denote by
\[\Gamma = \St \cap G(F).\]
Then $\Gamma$ is an arithmetic subgroup of $G(F_\infty)$.

For any $x \in X(\CO)$ and $y \in x\St \cap X(\CO)$. Consider the action of $\Gamma$ on 
\[x\St \cap yG(F) = y\St \cap yG(F).\]

\begin{prop}[Proposition 2.12 in \cite{WX}]\label{lem:decompose-single-rational-orbit}
	There is a bijection
	\[(x\St \cap yG(F))/\Gamma = T_y(F) \bs T_y(\BA)/(T_y(\BA)\cap \St)\]
	where $T_y$ is the stabilizer of $y$ in $G$.
\end{prop}

By Proposition \ref{prop: identify-to-Br-set}, for the equivalent classes of $X(\CO)$ under the equivalence relation $\sim_{\St}$,
we can choose representatives $x^{(a)} \in X(\CO)$ with the index $a \in X(\wh{\CO})^{\Br(X)}/\St$ so that
\[X(\CO) = \bigsqcup_{a} x^{(a)}\St \cap X(\CO).\]

For each $x^{(a)}$, by Proposition \ref{prop:Shafarevich}, for the equivalent classes of $x^{(a)}\St \cap X(\CO)$ under the 
equivalence relation $\sim_{G(F)}$, we can choose representatives $x_j^{(a)} \in x^{(a)}\St \cap X(\CO)$ with the index $j \in \Sha(T)$ 
so that
\[x^{(a)}\St \cap X(\CO) = \bigsqcup_j x^{(a)}\St \cap x_j^{(a)}G(F).\]

For each $x^{(a)}$ and $x_j^{(a)}$, by Proposition \ref{lem:decompose-single-rational-orbit}, for the equivalent classes of
$x^{(a)}\St \cap x_j^{(a)}G(F)$ under the action of $\Gamma$, we can choose representatives $x_{j,k}^{(a)} \in x^{(a)}\St \cap x_j^{(a)}G(F)$
with the index
\[k \in T_{x_j^{(a)}}(\BA)/T_{x_j^{(a)}}(F)\prod_v \left(T_{x_j^{(a)}}(F_v)\cap \St_v\right).\]
Here, $T_{x_j^{(a)}}$ is the stabilizer of $x_j^{(a)}$ in $G$.

We obtain
\[X(\CO) = \bigsqcup_{a,j,k} x_{j,k}^{(a)} \cdot \Gamma\]
so that for each $\CT > 0$
\[N(X,\CT) = \sum_{a,j,k} N(x_{j,k}^{(a)},\CT), \quad N(y,\CT) = \#\left\{ z \in y\cdot \Gamma: ||z|| \leq \CT \right\}.\]

Note that for each $\CT$, as $N(X,\CT)$ is finite, the summation over $a,j,k$ is in fact  finite.

Applying Assumption \ref{ass: equi-property} for each $x_{j,k}^{(a)}$
and $\Gamma$, we obtain
\[N(X,\CT) \sim \frac{v_T(\log \CT)^r}{\mu_{G,\infty}(\Gamma \bs G(F_\infty))} \sum_{a,j,k} 
\mu^1_{T_{x_{j,k}^{(a)}},\infty}\left( (T_{x_{j,k}^{(a)}}(F)\cap \Gamma)\bs T_{x_{j,k}^{(a)}}(F_\infty)^1 \right)
\mu_{X,\infty}\left( x^{(a)}\cdot G(F_\infty) \cap X(F_\infty,\CT) \right).\]
In the above, we note that
\[x_{j,k}^{(a)}\cdot G(F_\infty) \cap X(F_\infty,\CT) = x^{(a)} \cdot G(F_\infty) \cap X(F_\infty,\CT).\]

Let $x \in X(\CO)$. For each $v$, denote by $\St_{x,v} = \St_v \cap T_x(F_v)$ and $\St_x = \prod_v \St_{x,v}$. Denote by 
\[r_x = \#\left(T_{x}(\BA)/T_{x}(F)\St_x \right).\]

\begin{lem}\label{lem:vol-T-1}
	We have
	\[\mu_{T_x,\infty}^1\left( (T_x(F) \cap \Gamma) \bs T_x(F_\infty)^1 \right) = \frac{r_T |\Delta_F|^{\dim T/2}\#\Pic(T)}{r_x\mu_{T_x,f}(\St_{x,f})\#\Sha(T)}.\]
	Here, $r_T = \lim_{s \ra 1} (s-1)^{\rank X^*(T)} L(s,\rho_T)$ is the normalization factor of the global Tamagawa
	measure on $T(\BA)$ (see Section \ref{Haar}).
\end{lem}
\begin{proof}
	Denote by $\St_{x,f} = \prod_{v < \infty} \St_{x,v}$. We have the exact sequence
	\[1 \lra T_x(F)\bs T_x(F)T_x(F_\infty)^1\St_{x,f} \lra T_x(F) \bs T_x(\BA)^1 \lra T_x(F)T_x(F_\infty)^1\St_{x,f} \bs T_x(\BA)^1 \lra 1.\]
	Note that
	\[T_x(F)T_x(F_\infty)^1\St_{x,f} \bs T_x(\BA)^1 = T_x(F)T_x(F_\infty)\St_{x,f} \bs T_x(\BA)\]
	so that it has the cardinality $r_x$. On the other hand, we have
	\[T_x(F)\bs T_x(F)T_x(F_\infty)^1\St_{x,f}  = (T_x(F) \cap T_x(F_\infty)^1\St_{x,f}) \bs T_x(F_\infty)^1\St_{x,f} 
	= (T_x(F)\cap \Gamma) \bs T_x(F_\infty)^1\St_{x,f}\]
	and
	\[1 \lra \St_{x,f} \lra (T_x(F)\cap \Gamma) \bs T_x(F_\infty)^1\St_{x,f} \lra (T_x(F)\cap \Gamma) \bs T_x(F_\infty)^1 \lra 1.\]
    In the above, the first map sends any element in $\St_{x,f}$
    to $T_x(F_\infty)^1\St_{x,f}$ whose archimedean component is
    equal to $1$.
    
	Therefore,
	\[\mu_{T_x}^1(T_x(F) \bs T_x(\BA)^1) = r_x \mu_{T_x,\infty}^1\left( T_x(F) \cap \Gamma \bs T_x(F_\infty)^1 \right)\mu_{T_x,f}(\St_{x,f}).\]
	Denote by $\tau(T_x)$ the Tamagawa number of $T_x$. By the definition of Tamagawa measures,
	\[\mu_{T_x}^1(T_x(F) \bs T_x(\BA)^1) = r_{T_x}|\Delta_F|^{\dim T_x/2}\tau(T_x).\]
	Note that $T_x$ is isomorphic to $T$. By \cite[(5.1.1)]{Kot84}
	\[\mu_{T_x}^1(T_x(F) \bs T_x(\BA)^1) = r_{T}|\Delta_F|^{\dim T/2}\tau(T).\]
        By a formula of Sansuc  \cite[\S 5.1]{Kot84}, we have 
	\[\tau(T) = \frac{\#\Pic(T)}{\#\Sha(T)}.\]
	The Lemma now holds.
\end{proof}

\begin{lem}
   Let $x \in X(\CO)$. For any $y \in x\St \cap xG(F)$, the volume
   \[\mu_{T_y,\infty}^1\left( T_y \cap \Gamma \bs T_y(F_\infty)^1 \right) = \mu_{T_x,\infty}^1\left( T_x \cap \Gamma \bs T_x(F_\infty)^1 \right).\]
\end{lem}
\begin{proof}

	By Lemma \ref{lem:vol-T-1}, it is enough to show that if $y \in x\St \cap xG(F)$, then 
	$r_y = r_x$ and $\mu_{T_y,f}(\St_{y,f}) = \mu_{T_x,f}(\St_{x,f})$.

	As $y = x\cdot \gamma$ for some $\gamma \in G(F)$, there is an isomorphism
	\[T_x \stackrel{\sim}{\lra} T_y, \quad t \mapsto t \cdot \gamma.\]
	As we assume that gauge forms on $G$, $T_x$ (and also $T_y$) and $X$ match together algebraically, 
	the gauge form $\omega_{T_x}$ on $T_x$ and the gauge form $\omega_{T_y}$ on $T_y$ are compatible with the above isomorphism.  

	We may write $y = x\cdot u = x \cdot \gamma$ with $u \in \St$ and $\gamma \in G(F)$. In particular, $t = \gamma u^{-1} \in T_x(\BA)$. 
	
	We claim that the conjugation of $\gamma$ induces a bijection
	\[T_x(F)\St_x \bs T_x(\BA) \stackrel{\sim}{\lra} T_y(F)\St_y \bs T_y(\BA), \quad t \mapsto t\cdot \gamma.\]
	In fact, it is clear that $T_x \cdot \gamma = T_y$. Moreover
	\[ (T_x(F)\St_x)\cdot \gamma = T_y(F) (\St_x \cdot \gamma) = T_y(F) (\St_x \cdot tu) = T_y(F) (\St_x \cdot u) = T_y(F)\St_y.\]

	In the above, we have already noted that there is a bijection
	\[\St_{x,v} \stackrel{\sim}{\lra} \St_{y,v}, \quad t \mapsto t\cdot \gamma.\]
	Therefore, $\mu_{T_y,f}(\St_{y,f}) = \mu_{T_x,f}(\St_{x,f})$.
	We are done.
\end{proof}

Hence
\[N(X,\CT) \sim  v_T(\log \CT)^r \sum_{a,j} 
v(x_j^{(a)}) \mu_{X,\infty}\left( x_j^{(a)} \cdot G(F_\infty) \cap X(F_\infty,\CT) \right)
\]
where for each $x = x_j^{(a)}$,
\[v(x) = \frac{r_x \mu_{T_x,\infty}^1\left( (T_x(F) \cap \Gamma) \bs T_x(F_\infty)^1 \right)}{\mu_{G,\infty}(\Gamma \bs G(F_\infty))}.\]

In the following, for each $x = x_j^{(a)}$, we shall study $v(x)$.

\begin{lem}\label{lem:vol-G}
	We have
	\[\mu_{G,\infty}(\Gamma \bs G(F_\infty))\mu_{G,f}(\St) = |\Delta_F|^{\dim G/2}.\]
\end{lem}
\begin{proof}
	By the strong approximation property for $G$,
	\[\mu_{G,\infty}(\Gamma \bs G(F_\infty))\mu_{G,f}(\St) = \mu_{G,\BA}(G(F) \bs G(F)\St) = \mu_{G,\BA}(G(F) \bs G(\BA)) = 
	|\Delta_F|^{\dim G/2} \tau(G) = |\Delta_F|^{\dim G/2}.\]
	Here, as $G$ is simply connected, $\tau(G) = 1$. 
\end{proof}

\begin{lem}
	We have
	\[v(x) = \frac{r_T |\Delta_F|^{\dim T/2}}{|\Delta_F|^{\dim G/2}}\frac{\#\Pic(T)}{\#\Sha(T)}\mu_{X,f}(x\cdot \St).\]
\end{lem}
\begin{proof}
	By Lemma \ref{lem:vol-T-1} and Lemma \ref{lem:vol-G}, 
	\[v(x) = \frac{r_T |\Delta_F|^{\dim T/2}}{|\Delta_F|^{\dim G/2}}\frac{\#\Pic(T)}{\#\Sha(T)}\frac{\mu_{G,f}(\St)}{\mu_{T_x,f}(\St_x)}.\]
	Note that for each $v<\infty$, we have
	\[\mu_{X,v}(x\cdot \St_v) = \frac{\mu_{G,v}(\St_v)}{\mu_{T_x,v}(\St_{x,v})}.\]
	The Lemma now holds.
\end{proof}

Therefore, by Proposition \ref{prop:Shafarevich}

\[
\begin{aligned}
N(X,\CT) &\sim  v_T(\log \CT)^r \sum_{a,j} 
v(x_j^{(a)}) \mu_{X,\infty}\left( x_j^{(a)} \cdot G(F_\infty) \cap X(F_\infty,\CT) \right) \\
&= v_T(\log \CT)^r \frac{r_T |\Delta_F|^{\dim T/2}}{|\Delta_F|^{\dim G/2}}\#\Pic(T) \sum_{a} \mu_{X,f}(x^{(a)}\cdot \St)
\mu_{X,\infty}\left( x^{(a)} \cdot G(F_\infty) \cap X(F_\infty,\CT) \right). 
\end{aligned}
\]

Finally, by Proposition \ref{prop: identify-to-Br-set}, we obtain
\[N(X,\CT) \sim v_T (\log \CT)^r \#\Pic(T) \int_{X(\BA)^{\Br}} f_{X}^\CT(x) dx.\]
Here, we note that $\frac{r_T |\Delta_F|^{\dim T/2}}{|\Delta_F|^{\dim G/2}}$ is the normalization factor in the definition
of the global Tamagawa measure on $X(\BA)$. We are done.

\section{An equi-distribution property}\label{sec: equi-distribution-refinement}

Denote by $G = \SL_n$. Let $\chi(x) \in \CO[x]$ be a monic and regular polynomial of degree $n$. Denote by $K = F[x]/(\chi(x))$. As $\chi$ is regular, 
we may write
\[K = K_0 \times \cdots \times K_r\]
where $K_j$, $0 \leq j \leq r$, are field extensions of $F$. Denote by $n_j = [K_j:F]$, $0 \leq j \leq r$. Then
\[\chi(x) = \chi_0(x) \cdots \chi_r(x)\]
with irreducible $\chi_j(x) \in F[x]$, $0 \leq j \leq r$. 

Let $\gamma_0 \in \GL_n(F)$ be a matrix with integral
entries such that the characteristic polynomial of $\gamma_0$ is $\chi$. Denote by $T_{\gamma_0}$ the stabilizer of $\gamma_0$ in $G$. 

The purpose of this section is to prove the following result. 

\begin{thm}\label{thm: equi-distribution}
	The equi-distribution property (Assumption \ref{ass: equi-property}) holds for $X = T_{\gamma_0} \bs G$: 
	there is a constant $v_\chi > 0$ such that for any arithmetic subgroup $\Gamma \subset G(F_\infty)$ and any $\gamma \in X(F)$,
\[
\#\{y\in \gamma\cdot\Gamma: ||y|| \leq \CT\}\sim v_\chi \frac{ \mu^1_{T_\gamma,\infty}((T_\gamma(F)\cap \Gamma)\bs T_\gamma(F_\infty)^1)}
{\mu_{G,\infty}(\Gamma\bs G(F_\infty))}
\mu_{X, \infty}(B(\gamma,\CT))(\log \CT)^r. 
\]
Here, we denote by $B(\gamma,\CT) = \gamma \cdot G(F_\infty) \cap X(F_\infty,\CT)$. In fact, we have
\[v_\chi = [F:\BQ]^rn^{r-1}\prod_{j=0}^r n_j.\]
\end{thm}

This is a refinement of  Theorem \ref{thm: Zhang2021}. To state this theorem, we need to introduce some notations.

Fix $\gamma \in X(F)$ and denote by $T = T_\gamma$ the stabilizer of $\gamma$ in $G$. 

Denote by $V = F^n$. 
For each $0 \leq j \leq r$, denote by $V_j \subset V$ the kernel of $\chi_j(\gamma)$ (under the usual action of $n \times n$ matrices on $V$). 
Then 
\[V = \bigoplus_{j=0}^r V_j, \quad \dim_F V_j = n_j.\]
The subspace $V_j$ is preserved by $T$. For each $0 \leq j \leq r$, denote by
\[\lambda_j(t) = \det(t|_{V_j}), \quad t \in T.\]
Then $\lambda_j$, $0 \leq j \leq r$ generate the group $X^*(T)_\BQ$.

Denote by $H$ the hyperplane in $\BR^{r+1}$ consisting of $y = (y_j)_j \in \BR^{r+1}$ and $\sum_j y_j = 0$. The measure on $H$
is endowed by the projection to the first $r$-th coordinates
\[H \stackrel{\sim}{\lra} \BR^r, \quad y = (y_j)_{0 \leq j \leq r} \mapsto (y_j)_{0 \leq j \leq r-1}.\]
Consider the following surjective map
\begin{equation}\label{eqn: log-map-def}
T(F_\infty) \stackrel{\log}{\lra} H, \quad t \mapsto \left( \log|\lambda_j(t)| \right)_j.
\end{equation}
Denote by $T(F_\infty)^1$ the kernel of this $\log$ map. 
We endow the measures on $T(F_\infty)$ and $T(F_\infty)^1$ such that the quotient measure on $T(F_\infty)^1 \bs T(F_\infty)$ is given
by the above measure on $H$.

For each $0 \leq j \leq r$, we fix a $F$-basis of $V_j$, say $e_1^j, \ldots, e_{n_j}^j$. Denote by $\CJ$ the set of  proper 
nonempty subsets of $\{0,\ldots,r\}$. For each $J \in \CJ$, denote by
\[w_J = \wedge_{j \in J} w^j \in \bigwedge^{\sum_{j \in J} n_j} V, \quad w^j = e_1^j \wedge \cdots \wedge e_{n_j}^j.\]
Then 
\[w_J \cdot t = \prod_{j \in J} \lambda_j(t) w_J, \quad t \in T.\]
In particular, $w_J$ is invariant under the action of $T(F_\infty)^1$. 

For each archimedean place $v$, we consider the following function $||\cdot||_v$ on $V_v$: for $y = \sum_{i,j} y_{ij} e_i^j \in V_v$,
\[||y||_v = \left(\sum_{i,j} |y_{ij}|_v^{2/e_v}\right)^{e_v/2} \]
where $e_v = 1$ if $v$ is real and $e_v=2$ if $v$ is complex. Note that
\begin{itemize}
	\item For any $\alpha \in F_v$ and $y \in V_v$, $||\alpha y||_v = |\alpha|_v ||y||_v$;
	\item Denote by $U_v$ the standard maximal compact subgroup of $G(F_v)$: if $v$ is real, then $U_v = \SO(n)$ and 
		if $v$ is complex, then $U_v = \SU(n)$. Then $||y\cdot k||_v = ||y||_v$ for any $y \in V_v$ and 
		$k \in U_v$.
\end{itemize}
Moreover, we extend the definition of $||\cdot||_v$ to $\wedge^{k}V_v$, $1 \leq k \leq n$
with respect to the basis induced by $\{e_i^j\}_{i,j}$. Denote by $||\cdot|| = \prod_{v|\infty} ||\cdot||_v$.

For each $J \in \CJ$, consider the function
\[d_J(g) = ||w_J \cdot g||, \quad g \in T(F_\infty)^1 \bs G(F_\infty).\]

For any $\varepsilon>0$ and $g\in G(F_\infty)$, consider the  following truncation window given by
\[\Omega_{g,\varepsilon} = \left\{ t\in T(F_\infty)^1 \bs T(F_\infty)\Big|  d_J(tg) \ge \varepsilon ,\ \ \forall  J\in \CJ \right\}.\]

\begin{lem}\label{lem: modified-def-Omega}
   For each $\varepsilon$ and $g$, $\Omega_{g,\varepsilon}$ has finite volume. 
   There is an isomorphism
   \[\Omega_{g,\varepsilon} \stackrel{\log}{\lra} \left\{ y= (y_j)_j \in H \Big|  \sum_{j \in J} y_j 
    \geq \log(\varepsilon) - \log d_J(g), \quad \forall J \in \CJ\right\}.\]
\end{lem}
\begin{proof}
	For each $t  \in \Omega_{g,\varepsilon}$, denote by $y = (y_j)_j \in \BR^{r+1}$ its image under the $\log$ map. 
	Then $y_j = \log|\chi_j(t)|$. For each $J \in \CJ$, we have
	\[d_J(tg) = ||w_J \cdot tg|| = ||\prod_{j \in J} \chi_j(t) w_J g || = \prod_{j \in J} |\chi_j(t)| d_J(g).\]
	This gives the above isomorphism.

	Moreover, note that for each $J$
	\[\sum_{j \in J} y_j = - \sum_{j \in J^c} y_j.\]
	This implies that $\Omega_{g,\varepsilon}$ is bounded so that it has finite volume.
\end{proof}

\begin{lem}\label{lem-invariant-Omega}
	Denote by $\mu(\Omega_{g,\varepsilon})$ the volume of $\Omega_{g,\varepsilon}$ under the above quotient measure on 
	$T(F_\infty)^1 \bs T(F_\infty)$. Then for any $\varepsilon$ and $g$,
	\[\mu(\Omega_{tgk,\varepsilon}) = \mu(\Omega_{g,\varepsilon}), \quad t \in T(F_\infty), k \in \prod_{v|\infty} U_v.\]
\end{lem}
\begin{proof}
	It is invariant under the right multiplication of $U_v$ since the function $||\cdot||_v$ is invariant under $U_v$.
	Let $t \in T(F_\infty)$ and denote by $y^0 = \log(t) \in \BR^{r+1}$ under \eqref{eqn: log-map-def}. Then under the $\log$ map, the map $y \mapsto y+y_0$ gives
	a bijection between $\Omega_{tg,\varepsilon}$ and $\Omega_{g,\varepsilon}$. In particular, they have the same volume.
\end{proof}

In particular, for each $\varepsilon$, the function $g \mapsto \mu(\Omega_{g,\varepsilon})$ descends to a function on $\gamma \cdot G(F_\infty)$.

\begin{thm}[Theorem 1.2 in \cite{Zhang}]\label{thm: Zhang2021}
 Let $\Gamma<G(F)$ be an arithmetic subgroup. For any $\varepsilon>0$ and $\gamma \in X(F)$, we have
\begin{equation}\label{eq:eqred}
\#\{y\in \gamma \cdot\Gamma: \|y\|\leq \CT\}\ \sim
\frac{ \mu^1_{T_\gamma,\infty}((T_\gamma(F)\cap \Gamma)\bs T_\gamma(F_\infty)^1)}{\mu_{G,\infty}(\Gamma\bs G(F_\infty))}
\int_{B(\gamma,\CT)}\mu(\Omega_{g,\varepsilon})d\mu_{X, \infty},
\quad \CT\to\infty.
\end{equation}
\end{thm}

In the following, we shall study the asymptotic behavior
\[\int_{B(\gamma,\CT)}\mu(\Omega_{g,\varepsilon}) d\mu_{X,\infty}, \quad \CT \to\infty.\]
It is enough to consider the case $\gamma = \gamma_0$.

We rewrite the basis $\{e_i^j\}_{i,j}$ as
\[(f_1,\ldots,f_n) = \left( e_1^0,\ldots,e_{n_0}^0,\ldots,e_1^r,\ldots,e_{n_r}^r \right).\]
In particular, for each $J \in \CJ$, we can write  
\[w_J = f_{i_1} \wedge \cdots \wedge f_{i_\ell}\]
for some indices $i_1,\ldots,i_\ell$.  Denote by
\begin{equation}\label{eq:degree}
\deg(J) = \sum_{t=1}^\ell (i_t - t).
\end{equation}

\begin{prop}\label{prop: c_T} For any fixed $\varepsilon > 0$, as $\CT \ra +\infty$, we have
\begin{equation}\label{eq:key24}
\int_{B(\gamma_0,\CT)}\mu(\Omega_{g,\varepsilon}) d\mu_{X,\infty} \sim v_{\chi}(\log \CT)^r \mu_{X,\infty}(B(\gamma_0,\CT)),
\qquad \CT \to\infty.
\end{equation}
where $v_\chi$ is the volume of the following polytope 
\[P_\chi = \left\{ y \in H \Big| \sum_{j \in J} y_j \geq -[F:\BQ]\deg(J), \quad \forall J \in \CJ \right\}.\]
\end{prop}

\begin{remark}
	It is true that the definition of $\deg(J)$ with $J \in \CJ$  depends on the ordering of $(V_j)_{j=0}^r$. 
	However, as the proposition implies, the volume $v_{\chi}$ does not.
\end{remark}

We shall prove Proposition \ref{prop: c_T} in the appendix of this paper.

\begin{lem}\label{lem:deg_v(J)=m_J}
   Let $J \in \CJ$. We have
   \[\deg(J) = m_J:= \sum_{p<q, p \not\in J, q \in J} n_pn_q\]
   so that
   \[P_\chi = \left\{ y \in H \Big| \sum_{j \in J} y_j \geq -[F:\BQ] m_J, \quad \forall J \in \CJ \right\}.\]
\end{lem}
\begin{proof}
   By Proposition \ref{prop: c_T}, it suffices to show that
   \[\sum_{t=1}^\ell (i_t-t) = m_J.\]

   Let $A:=\{i_1,\dots,i_\ell\}\subset\{1,2,\dots,n\}$ be the set of indices of $\{f_1,\ldots,f_n\}$ occurring in 
   $\prod_{j \in J} V_{j}$.  For each $1 \leq t \leq \ell$, since $i_t$ is the $t$-th smallest element of $A$, among the integers 
   $\{1,2,\dots,i_t\}$ there are exactly $t$ elements of $A$. Hence
	\[\#\big(\{1,\dots,i_t\}\setminus A\big)=i_t-t\]
    and

\[\sum_{t=1}^{\ell}(i_t-t) =\sum_{t=1}^{\ell}\#\big(\{1,\dots,i_t\}\setminus A\big) =\#\Big\{(k,i)\;:\; i\in A,\ k\notin A,\ k<i\Big\}.\]

By construction of the ordered basis, an index in the block $V_{p}$ of $V$ is less than
an index in the block $V_{q}$ if and only if $p<q$.  Moreover, $A$ consists
all indices belonging to blocks $V_{q}$ with $q\in J$, while $A^c$
consists all indices belonging to blocks $V_{p}$ with $p\notin J$.
Therefore a pair $(k,i)$ with $k<i$ contributes if and only if $k$ lies in some
block $V_{p}$ with $p\notin J$, $i$ lies in some block $V_{q}$ with $q\in J$, and
$p<q$.

Fix such a pair of blocks $(p,q)$ with $p<q$, $p\notin J$, $q\in J$.  There are
$n_p$ choices for $k$ in $V_{p}$ and $n_q$ choices for $i$ in $V_{q}$. Hence exactly
$n_p n_q$  pairs $(k,i)$ coming from these blocks.  Summing over all
such $(p,q)$ yields
\[
\#\Big\{(k,i)\;:\; i\in A,\ k\notin A,\ k<i\Big\}
=
\sum_{\substack{p<q\\ p\notin J,\ q\in J}} n_p n_q = m_J.
\]
The Lemma now holds.
\end{proof}

\begin{prop}\label{prop:volume-polytope}
    We have
    \[v_\chi = [F:\BQ]^rn^{r-1}\prod_{j=0}^r n_j.\]
\end{prop}

We shall prove Proposition \ref{prop:volume-polytope} in the next section.

Now, Theorem \ref{thm: equi-distribution} holds.

\section{Volumes of polytopes}

In this section, we give a proof of Proposition \ref{prop:volume-polytope}.

Denote by $d_0 = [F:\BQ]$. Let $\chi(x) \in \CO[x]$ be a monic and regular polynomial. 
Denote by $K = F[x]/(\chi(x))$. We write
\[K = K_0 \times \cdots \times K_r\]
where $K_j$, $0 \leq j \leq r$, are field extensions of $F$. Denote by $n_j = [K_j:F]$, $0 \leq j \leq r$. 

Denote by $\CJ$ the set of proper nonempty subsets $J$ of $\{0,\ldots,r\}$. For any
$J \in \CJ$, denote by
\[m_J = \sum_{p<q, p \not\in J, q \in J} n_pn_q.\]
Then
\[m_{J^c} = \sum_{p < q, p \in J, q \not\in J} n_pn_q = 
\sum_{p > q, p \not \in J, q \in J} n_pn_q, \quad
m_J + m_{J^c} = \sum_{p \in J, q \not\in J} n_pn_q.\]

 By Lemma \ref{lem:deg_v(J)=m_J}, we have
\[P_\chi = \left\{ y \in H \Big| \sum_{j \in J} y_j \geq -d_0m_J, \quad \forall J \in \CJ \right\}.\]

\begin{lem}
	The polytope $P_\chi$ is a translate of the centrally symmetric polytope
	\begin{equation}\label{eqn: cetral-symmetric-polytope}
    C_\chi = \left\{y \in H \Bigg| \Big|\sum_{j \in J} y_j\Big|  \leq \frac{d_0}{2}\sum_{p \in J, q \not\in J} n_pn_q , 
	\quad \forall J \in \CJ \right\} .
    \end{equation}
\end{lem}
\begin{proof}
     For $y \in P_\chi$ and any $J \in \CJ$, we have
     \[ \sum_{j \in J} y_j \geq -d_0m_J, \quad -\sum_{j \in J} y_j  = \sum_{j \in J^c} y_j \geq -d_0m_{J^c}.\]
     In particular,
     \[ d_0m_{J^c} \geq \sum_{j \in J} y_j \geq -d_0m_J.\]
     Denote by
     \[b_J = \frac{d_0(m_{J^c} - m_J)}{2}, \quad c_J = \frac{d_0(m_{J^c} + m_J)}{2}.\]
     Then for any $J$,
     \[ \Big| \sum_{j \in J} y_j  - b_J \Big| \leq c_J.\]
     We claim that there is $y_0 \in C_\chi$ such that $\sum_{j \in J} y_{0,j} = b_J$. Therefore,
     \[\Big| \sum_{j \in J} (y-y_0)_j \Big| \leq c_J.\]
     In other words, $P_\chi$ is the translate of $C_\chi$ by $y_0$.

     For the claim, as for each $J$, $b_J + b_{J^c} = 0$, it suffices to prove that 
     $b_{J_1} + b_{J_2} = b_{J_1 \cup J_2}$ for any $J_1 \cap J_2 = \emptyset$.  For this, it is enough to prove that
     $b_{J} + b_{\{j_0\}} = b_{J \cup \{j_0\}}$ for any proper subset $J$ and $j_0 \not\in J$. In fact,
     \[
     \begin{aligned}
     m_{(J \cup \{j_0\})^c} &= \sum_{p > q, p \not\in J \cup \{j_0\}, q \in J \cup \{j_0\}} n_pn_q \\
			&= \sum_{p > q, p \not\in J \cup \{j_0\}, q \in J} n_pn_q + \sum_{p > j_0, p \not\in J} n_pn_{j_0} \\
			&= m_{J^c} + m_{\{j_0\}^c} - \sum_{j_0 > q, q \in J} n_{j_0}n_{q}- \sum_{p > j_0, p \in J} n_{p}n_{j_0} \\
			& = m_{J^c} + m_{\{j_0\}^c} - \sum_{q \in J} n_{j_0}n_{q}.
    \end{aligned}\]
    Similarly, we have
    \[ m_{J \cup \{j_0\}} = m_{J} + m_{\{j_0\}} - \sum_{q \in J} n_{j_0}n_{q}.\]
    Thus
    \[ m_{(J \cup \{j_0\})^c} - m_{J \cup \{j_0\}} = (m_{J^c} - m_J) + (m_{\{j_0\}^c} - m_{\{j_0\}}).\]
    The claim now holds.
\end{proof}

Recall that the Minkowski sum of several subsets $A,\ldots,B$ in the linear space $\BR^{r+1}$ is
the locus of sums of vectors that belong to these subsets 
\[A+ \cdots +B = \left\{ a+\cdots+b \Big| a \in A,\ldots,b \in B \right\} \subset \BR^{r+1}.\]
A zonotope is a Minkowski sum of several line intervals. Let $\Gamma$ be a connected graph with
the vertex set $\left\{ 0,\ldots,r \right\}$. Denote by $E(\Gamma)$  the set of edges for $\Gamma$. For each edge $(p,q) \in E(\Gamma)$, consider a weight $w_{pq} \geq 1$ and write 
$w = (w_{pq})_{(p,q) \in E(\Gamma)}$. For a connected weighted graph $(\Gamma,w)$, 
consider the weighted graphical zonotope
\[Z(\Gamma,w) = \sum_{(p,q) \in E(\Gamma), p<q} [0,w_{pq}(e_p-e_q)] \subset H.\]
Here, $e_0,\ldots, e_r$ are the coordinate vectors in $\BR^{r+1}$.

\begin{lem}\label{lem-submodular}
    Consider the following weighted complete graph $(\Gamma,w)$ with
	\[V(\Gamma) = \{0,\ldots,r\}, \quad E(\Gamma) = \left\{ (p,q), 0 \leq p < q \leq r \right\}, \quad w = (w_{pq}), \quad w_{pq} = d_0n_pn_q.\]
    Then the weighted graphical zonotope $Z(\Gamma,w)$ is a translate of $C_\chi$ \eqref{eqn: cetral-symmetric-polytope}. 
\end{lem}

To prove this lemma, we introduce the following notations. Denote by $\CJ$ the set of proper non-empty subsets of $\{0,\ldots,r\}$. 
For a function $\phi: \CJ \ra \BR_{\geq 0}$, consider its base polytope
\[B(\phi) = \left\{ y \in H \Bigg| \sum_{j \in \CJ} y_j \leq \phi(J), \quad \forall J \in \CJ \right\}
=\left\{ y \in H \Bigg| -\phi(J^c) \leq \sum_{j \in \CJ} y_j \leq \phi(J), \quad \forall J \in \CJ \right\}.\]

We have the following result for base polytopes given by submodular functions.
\begin{prop}[\cite{Fu05}, Chapter II, Section 3.1, (3.33)]\label{prop-submodular}
   Let $\phi_i: \CJ \ra \BR_{\geq 0}$, $i=1,2$ be two functions on $\CJ$ with non-negative values. Assume that they are
   submodular, that is, 
   \[\phi_i(J_1) +\phi_i(J_2) \geq \phi_i(J_1 \cap J_2) + \phi_i(J_1 \cup J_2), \quad J_1, J_2 \in \CJ, \quad i=1,2.\]
   Then
   \[B(\phi_1 + \phi_2) = B(\phi_1) + B(\phi_2).\]
\end{prop}

\begin{proof}[Proof of Lemma \ref{lem-submodular}]
Denote by 
\[Z^0(\Gamma,w) = Z(\Gamma,w) - \frac{1}{2} \sum_{p,q} w_{pq}(e_p-e_q) = \sum_{p,q} \left[ -\frac{w_{pq}}{2}(e_p-e_q),\frac{w_{pq}}{2}(e_p-e_q)
\right].\]
We shall prove that
\[C_\chi = Z^0(\Gamma,w).\]

In fact, consider the function 
\[f: \CJ \lra \BR_{\geq 0}, \quad J \mapsto \frac{1}{2} \sum_{p \in J, q \not\in J} w_{pq}.\]
Then $B(f) = C_\chi$ as $f(J^c) = f(J)$ for any $J \in \CJ$.

On the other hand, for each $0 \leq p < q \leq r$, consider
\[f_{pq}: \CJ \lra \BR_{\geq 0}, \quad J \mapsto 
\begin{cases}
	\frac{w_{pq}}{2}, \quad &\text{if $|\{p,q\} \cap J| = 1$}; \\
	0, \quad &\text{otherwise}.
\end{cases}\]
Then $f_{pq}$ are submodular, that is, for any $J_1,J_2 \in \CJ$
\[f_{pq}(J_1) + f_{pq}(J_2) \geq f_{pq}(J_1 \cap J_2) + f_{pq}(J_1 \cup J_2).\]
In fact, equality holds in the above inequality except $p,q \not\in J_1 \cap J_2$ and $|\{p,q\} \cap J_i| = 1$ for $i=1,2$. Moreover,
we have
\[B(f_{pq}) = \left[ -\frac{w_{pq}}{2}(e_p-e_q),\frac{w_{pq}}{2}(e_p-e_q)\right].\]

Note that $f = \sum_{0 \leq p<q \leq r} f_{pq}$. By Proposition \ref{prop-submodular}, we have
\[C_\chi = B(f) = \sum_{0 \leq p < q \leq r} B(f_{pq}) = Z^0(\Gamma,w).\]

\end{proof}

\begin{lem}
   Let $(\Gamma,w)$ be a connected weighted graph. We have
\[\Vol(Z(\Gamma,w)) = \sum_T \prod_{(p,q) \in E(T)} w_{pq}.\]
Here, $T$ runs over spanning trees of $\Gamma$.
\end{lem}
\begin{proof}
	First, we consider the case $w_{pq} = 1$ for any $(p,q) \in E(\Gamma)$. Then by \cite[Proposition 2.4]{Postnikov2009}, 
	$\Vol(Z(\Gamma,w))$ equals  the number of spanning trees of $\Gamma$.  In fact, 
	the zonotope considered in \cite[Definition 2.2]{Postnikov2009} is
	\[Z_\Gamma = \sum_{(p,q) \in E(\Gamma)} [e_p,e_q].\]
	Note that for each $(p,q) \in E(\Gamma)$ with $p < q$
	\[ t_{pq}e_p + (1-t_{pq})e_q = t_{pq}(e_p-e_q) + e_q, \quad t_{pq} \in [0,1]\] 
	so that 
	\[ [e_p,e_q] = [0,e_p-e_q] + e_q.\]
	In particular, $Z_\Gamma$ is a translate of $Z(\Gamma,w)$ and they have the same volume.

     In the proof of \cite[Proposition 2.4]{Postnikov2009}, $Z_\Gamma$ is dissected into half-open
     parallelepipeds indexed by the spanning trees of $\Gamma$, each of unit volume.

The general case follows by scaling. Indeed, replacing the segment
$[0,e_p-e_q]$ by $[0,w_{pq}(e_p-e_q)]$ scales the corresponding edge vector by $w_{pq}$.
For a fixed spanning tree $T$, the associated parallelepiped has its volume scaled by
$\prod_{(p,q) \in E(T)} w_{pq}$. Summing over all spanning trees, we obtain
\[
\Vol(Z(\Gamma,w)) = \sum_{T} \prod_{(p,q) \in E(T)} w_{pq}.
\]
\end{proof}

\begin{lem}[Weighted Cayley identity]\label{lem-cayley}
	Let $m_j>0$, $0 \leq j \leq r$ be positive real numbers and $m = \sum_{j=0}^r m_j$. 
	Let $(\Gamma,w)$ be the following weighted complete graph
\[V(\Gamma) = \{0,\ldots,r\}, \quad E(\Gamma) = \left\{ (p,q), 0 \leq p < q \leq r \right\}, \quad w = (w_{pq}), \quad w_{pq} = m_pm_q.\]
Then
\[\sum_T \prod_{(p,q) \in E(T)} w_{pq} = m^{r-1} \prod_{j=0}^r m_j.\]
Here, $T$ runs over spanning trees of $\Gamma$.	
\end{lem}
\begin{proof}
We apply Kirchhoff's matrix-tree theorem \cite[Theorem 5.6.8]{StanleyEC2} to $(\Gamma,w)$.
The weighted Laplacian $L=(\ell_{pq})_{0\leq p,q \leq r}$ for $(\Gamma,w)$ is given by
\[
\ell_{pp}=\sum_{p\not= q} w_{pq}=m_pm - m_p^2, \quad \ell_{pq}=-m_p m_q, \quad p \not= q.
\]
Kirchhoff's theorem asserts that any cofactor $\det(L^{(k)})$ (obtained by deleting the $k$-th row and column of $L$)
equals the weighted sum $\sum_T\prod_{ (p,q) \in E(T)}w_{pq}$ over spanning trees $T$ .

It therefore suffices to compute $\det(L^{(r+1)})$.
For this, write $D=\diag(m_0,\dots,m_{r-1})$, $u=(m_0,\dots,m_{r-1})^\top$ and $v =(\sqrt{m_0},\dots,\sqrt{m_{r-1}})^\top$. Then
\[
L^{(r+1)}= mD-uu^\top  = D^{1/2}\left(mI_r-vv^\top \right)D^{1/2}, \quad 
\det(L^{(r+1)})=\det(D)\,\det\left(mI_r-vv^\top\right).
\]
Using the matrix-determinant lemma, we obtain
\[
\det(mI_r-vv^\top) = m^r\det\left( I_r - \frac{1}{m} vv^\top \right) = m^r\left( 1 - \frac{1}{m}v^\top v \right) = m^r\left( 1 - 
\frac{1}{m} \sum_{j=0}^{r-1} m_j\right) = m^{r-1}m_r.\]
Thus,
\[\sum_T\prod_{ (p,q) \in E(T)}w_{pq} = \det(L^{(r+1)}) = m^{r-1}\prod_{j=0}^r m_j.\]
\end{proof}

\begin{proof}[The proof of Proposition \ref{prop:volume-polytope}]
	For each $0 \leq j \leq r$, write $m_j = \sqrt{d_0} n_j$. Consider the weighted complete graph $(\Gamma,w)$ in Lemma \ref{lem-cayley}
	with $w_{pq} = m_pm_q$. Then
	\[v_\chi = \Vol(P_\chi) = \Vol(C_\chi) = \Vol(Z(\Gamma,w)) = (\sqrt{d_0}n)^{r-1}\prod_{j=0}^r (\sqrt{d_0}n_j)
	= d_0^r n^{r-1} \prod_{j=0}^{r} n_j.\]
	We are done.
\end{proof}

We illustrate the above results by the following example.

\begin{example}
Consider the complete graph $\Gamma=K_3$ with the edge set $\{12,13,23\}$ and unweighted
edge weights $w_{ij}\equiv 1$. Let 
\[
H:=\{(x_1,x_2,x_3)\in\R^3:\ x_1+x_2+x_3=0\}.
\]
We endow $H$ with the measure  by the following map
\[
\varphi:\R^2 \xrightarrow{\sim} H,\qquad (x_1,x_2)\longmapsto (x_1,x_2,-x_1-x_2).
\] 

The associated graphical zonotope to $\Gamma$ is
\[
Z(\Gamma,1)=[0,e_1-e_2]+[0,e_1-e_3]+[0,e_2-e_3]\ \subset\ H.
\]

Under the map $\varphi$, we have
\[
e_1-e_3 \leftrightarrow v_{13} = (1,0),\qquad
e_2-e_3 \leftrightarrow v_{23} = (0,1),\qquad
e_1-e_2 \leftrightarrow v_{12} = (1,-1).
\]
Hence
\[
Z(\Gamma,1)=[0,v_{13}]+[0,v_{23}]+[0,v_{12}]\ \subset\ \R^2.
\]

One checks that $Z(\Gamma, 1)$ is the convex hexagon with
vertices
\[
(0,0),\ (0,1),\ (1,1),\ (2,0),\ (2,-1),\ (1,-1)
\]
in cyclic order. Thus, 
\[
\mathrm{Area}(Z(\Gamma, 1))=3.
\]

The graph $\Gamma$ has exactly $3$ spanning trees. Correspondingly, $Z(\Gamma, 1)$ decomposes
(up to boundaries) into $3$ parallelograms of lattice area $1$:
\begin{align*}
P_{(13,23)} &:= [0,v_{13}]+[0,v_{23}],\\
P_{(12,13)} &:= [0,v_{12}]+[0,v_{13}],\\
P_{(12,23)} &:= v_{13}+[0,v_{12}]+[0,v_{23}].
\end{align*}
Thus $\mathrm{Vol}_{dx}(Z(\Gamma, 1))=3=\#\{\text{spanning trees of }K_3\}$.

\begin{center}
\begin{tikzpicture}[scale=1.2, line join=round, line cap=round]
  \draw[->] (-0.4,0) -- (2.6,0) node[below] {$x_1$};
  \draw[->] (0,-1.4) -- (0,1.6) node[left] {$x_2$};

  \coordinate (A) at (0,0);
  \coordinate (B) at (0,1);
  \coordinate (C) at (1,1);
  \coordinate (D) at (2,0);
  \coordinate (E) at (2,-1);
  \coordinate (F) at (1,-1);

  \fill[gray!15] (A) -- (1,0) -- (C) -- (B) -- cycle;

  \fill[gray!25] (B) -- (C) -- (D) -- (1,0) -- cycle;

  \fill[gray!35] (1,0) -- (D) -- (E) -- (F) -- cycle;

  \draw[thick] (A) -- (B) -- (C) -- (D) -- (E) -- (F) -- cycle;

  \fill (A) circle (1.2pt) node[below left] {$(0,0)$};
  \fill (B) circle (1.2pt) node[left] {$(0,1)$};
  \fill (C) circle (1.2pt) node[above] {$(1,1)$};
  \fill (E) circle (1.2pt) node[right] {$(2,-1)$};
  \fill (F) circle (1.2pt) node[below] {$(1,-1)$};

  \draw[->,thick] (A) -- (1,0) node[midway,below] {$v_{13}$};
  \draw[->,thick] (A) -- (0,1) node[midway,left] {$v_{23}$};
  \draw[->,thick] (A) -- (1,-1) node[midway,below right] {$v_{12}$};

\end{tikzpicture}
\end{center}
\end{example}

\section{The distributions of integral points and orbital integrals}

Let $\chi(x) \in \CO[x]$ be a regular polynomial. We shall
assume that $\chi(0) \not=0$.  This assumption is harmless for
our main result (see Lemma \ref{lem:chi(0)} below).

Denote by $G = \SL_n$ and $\wt{G} = \GL_n$ over a number field $F$. Denote
by $\fg = \fs\fl_n$ and $\wt{\fg} = \fg\fl_n$ their Lie algebras. 
Let $\gamma \in \wt{\fg}(\CO)$ 
be regular semisimple with the characteristic polynomial $\chi(x)$. As we assume $\chi(0) \not=0$,  $\gamma \in \wt{G}(F)$. Denote by $K = F[x]/(\chi(x))$ and $R = \CO[x]/(\chi(x))$. 
We consider the conjugation action of $\wt{G}$ and $G$ on $\wt{\fg}$.
Denote by $\wt{T} \cong K^\times$ and $T \cong K^1$ the stabilizers of $\gamma$ in $\wt{G}$ and $G$ respectively.

For each $\kappa \in H^1(F,T(\BA_{\bar{F}})/T(\bar{F}))^\vee$, we have the $\kappa$-orbital integral
$\CO_\gamma^\kappa(f)$ for any $f \in C_c^\infty(\wt{\fg}(\BA))$. If $f = \otimes_v f_v$, $f_v \in C_c^\infty(\fg(F_v))$ is
decomposable, 
\[\CO_\gamma^\kappa(f) = \prod_v \CO_\gamma^{\kappa_v}(f_v).\]
In the above, for each $v$, the local character $\kappa_v \in H^1(F_v,T)^\vee$ is the $v$-component of $\kappa$ with
\[\CO_\gamma^{\kappa_v}(f_v) = \sum_{\gamma' \sim_\st \gamma} \kappa_v(\inv_v(\gamma',\gamma))\CO_{\gamma'}(f_v),
\quad \CO_{\gamma'}(f_v) = \int_{T_{\gamma'}(F_v) \bs G(F_v)} f_v( \gamma' \cdot g)dg, \quad \gamma' \cdot g = g^{-1} \gamma' g.\]
Here, 
\begin{itemize}
\item $\gamma' \in \wt{\fg}(F_v)$ runs over $G(F_v)$-conjugacy classes which are stable conjugate to $\gamma$ (or equivalently, 
	$\wt{G}(F_v)$-conjugate to $\gamma$).
\item $\inv_v(\gamma',\gamma) \in H^1(F_v,T)$ is represented by the following cocycle
\[\sigma \in \Gamma_v \mapsto g\sigma(g)^{-1}, \quad \gamma' = \gamma \cdot g, \quad g \in G(\ov{F_v}).\]
\end{itemize}

The measure defining the $\kappa$-orbital integrals is normalized as follows. For each $v$, we have
\[\wt{T}(F_v) \bs \wt{G}(F_v) = X(F_v)  = \bigsqcup_{\gamma' \sim_\st \gamma} T_{\gamma'}(F_v) \bs G(F_v)\]
For each conjugacy class $\gamma' \in  \wt{\fg}(F_v)$ which is
stable conjugate to $\gamma$, $T_{\gamma'}$ is the stabilizer of $\gamma'$ in $G$. For each $\gamma'$, 
$T_{\gamma'}(F_v) \bs G(F_v)$ is open in $X(F_v)$. The quotient measure on 
$T_{\gamma'}(F_v) \bs G(F_v)$ is given by the quotient measure $d^0\mu$ on $\wt{T}(F_v) \bs \wt{G}(F_v) = K_v^\times \bs \GL_n(F_v)$ (see Section
\ref{Haar}). In particular, for each $v<\infty$, the quotient measure  is given by the measures on $\GL_n(F_v)$ and
$K_v^\times$ such that both $\GL_n(\CO_v)$ and $\CO_{K,v}^\times$ have volume one.

\begin{thm}\label{thm-orbit}
   For the homogeneous space $X = T \bs G$, we have
   \[N(X,\CT) \sim v_\chi C^\Tam_0 \sum_\kappa \CO_\gamma^\kappa(f^\CT) (\log \CT)^r.\]
   Here, 
   \begin{itemize}
     \item  $v_\chi$
    is the volume of certain polytope defined in Proposition \ref{prop: c_T} and  determined in Proposition \ref{prop:volume-polytope}. 
	 \item $C^\Tam_0$ is the constant in Proposition \ref{meas-comp}.
	 \item $\kappa$ runs over characters on $H^1(F,T(\BA_{\bar{F}})/T(\bar{F}))$. 
	 \item  For each $\CT$, the test function $f^\CT = \otimes_{v}  f_v^\CT \in C_c^\infty(\wt{\fg}(\BA))$ is taken as follows.
		 If $v|\infty$, then $f_v^\CT \in C_c^\infty(\wt{\fg}(F_v))$ is chosen such that for each conjugacy class $\gamma'$ in the stable
		 conjugacy class of $\gamma$, the local orbital integral
		 \[\CO_{\gamma'}(f_v^\CT) = \Vol( (\gamma' \cdot G(F_v)) \cap X(F_v,\CT)), \quad
		 X(F_v,\CT) = \left\{ x \in X(F_v)\Big| ||x||_v \leq \CT \right\}.\]
		 If $v < \infty$, then $f_v^\CT = f_v = 1_{\wt{\fg}(\CO_v)}$ is the characteristic function of $\wt{\fg}(\CO_v)$.
   \end{itemize}
\end{thm}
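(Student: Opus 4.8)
The plan is to combine the equidistribution of Theorem~\ref{thm-equi} with the adelic description of $X(\CO)$ coming from strong approximation with Brauer--Manin obstruction, and then to recognize the resulting local densities as components of $\kappa$-orbital integrals on $\wt{\fg}$. First I would fix the congruence subgroup $\Gamma = G(F)\cap\prod_{v<\infty}K_v \subset G(F)$, where $K_v\subset G(F_v)$ is the stabilizer of the open compact set $X(\CO_v)$ (so $K_v=G(\CO_v)$ for almost all $v$); it is precisely the arithmetic group preserving $X(\CO)$. By a standard finiteness argument $X(\CO)=\bigsqcup_i x_i\Gamma$ is a finite disjoint union of $\Gamma$-orbits, one of which is $\gamma\Gamma$. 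Applying Theorem~\ref{thm-equi} to each $x_i$ gives
\[
N(X,T)=\sum_i\#\{y\in x_i\Gamma:\ ||y||\leq T\}\sim\sum_i\frac{\Vol\big((\Gamma\cap T_{x_i}(F))\bs T_{x_i}(F_\infty)\big)\,\Vol\big(x_iG(F_\infty)\cap X(F_\infty,T)\big)}{\Vol(\Gamma\bs G(F_\infty))},
\]
all volumes being those of the archimedean Tamagawa measures.

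Next I would reorganize the orbit sum adelically. Since $G=\SL_n$ is simply connected, $H^1(F_v,G)=1$ for every $v$, so $X(F_v)=\bigsqcup_{\gamma'\sim_{\st}\gamma}T_{\gamma'}(F_v)\bs G(F_v)$ is the union of the $G(F_v)$-orbits indexed by $H^1(F_v,T)$ (a single orbit for all but finitely many $v$), and $X(\CO)=X(F)\cap\big(X(F_\infty)\times\prod_{v<\infty}X(\CO_v)\big)$. Following the Wei--Xu reformulation (Theorem~\ref{thm-wx}), the $\Gamma$-orbit decomposition of $N(X,T)$ is rewritten as a sum indexed by $\Br(X)/\Br(F)$; by the Tate--Nakayama duality $\Br(X)/\Br(F)\simeq H^1(F,T(\BA_{\bar F})/T(\bar F))^\vee$, this index set is exactly the set of $\kappa$ in the statement, and the local Brauer--Manin twist at $v$ becomes the local component $\kappa_v\in H^1(F_v,T)^\vee$ entering $\CO_\gamma^{\kappa_v}$. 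Concretely, a $\kappa$ corresponds to a family $(\kappa_v)_v$ of local characters whose product annihilates the image of $H^1(F,T)$, and the local character orthogonality relations are the mechanism that, upon summing over $\kappa$, converts the count of genuine $F$-rational orbits into a product of purely local densities. The work here is to match the indexing of the finitely many $x_i$ against tuples $(\gamma'_v)_v$ of local conjugacy classes with $\gamma'_v$ meeting $X(\CO_v)$, while tracking the archimedean unit-volume factors $\Vol\big((\Gamma\cap T_{x_i}(F))\bs T_{x_i}(F_\infty)\big)$.

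I would then identify the local densities with local $\kappa$-orbital integrals. At a finite place $v$, a point $\gamma'\in X(F_v)$ lies in $X(\CO_v)$ iff $\gamma'\in\wt{\fg}(\CO_v)$ (its characteristic polynomial being automatically $\chi$), so the $d^0\mu_v$-volume of the $G(F_v)$-orbit of $\gamma'$ inside $X(\CO_v)$ is
\[
\int_{T_{\gamma'}(F_v)\bs G(F_v)}1_{\wt{\fg}(\CO_v)}(\gamma'\cdot g)\,dg=\CO_{\gamma'}(1_{\wt{\fg}(\CO_v)}),
\]
so the $v$-factor of the $\kappa$-weighted orbit sum is $\sum_{\gamma'}\kappa_v(\inv_v(\gamma',\gamma))\CO_{\gamma'}(1_v)=\CO_\gamma^{\kappa_v}(1_v)$, nontrivial only at the finitely many $v$ where $X(F_v)$ has more than one orbit. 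At an archimedean place $v$, the one or two conjugacy classes $\gamma'$ in the stable class of $\gamma$ (Proposition~\ref{prop: archi-kappa}) contribute $\Vol\big(\gamma'G(F_v)\cap X(F_v,T)\big)$, which by the very choice of $f_v^T$ equals $\CO_{\gamma'}(f_v^T)$; summing against $\kappa_v$ gives $\CO_\gamma^{\kappa_v}(f_v^T)$. Multiplying over all places yields $\sum_\kappa\CO_\gamma^\kappa(f^T)$. Finally, Theorem~\ref{thm-equi} uses archimedean Tamagawa measures while $\CO_\gamma^{\kappa_v}$ is normalized by $d^0\mu$; the finite-place volumes $\prod_{v<\infty}\Vol(K_v)$ converting $\Vol(\Gamma\bs G(F_\infty))$ and $\Vol((\Gamma\cap T(F))\bs T(F_\infty))$ into global adelic volumes, together with the Tamagawa numbers of $G$, $\wt{G}$ and $T$ and the relative-discriminant factors, assemble precisely into the constant $C$ of Proposition~\ref{meas-comp}.

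The step I expect to be the main obstacle is the adelic reorganization: one must verify that the finitely many $\Gamma$-orbits of $X(\CO)$, weighted by their arithmetic volumes, reassemble---through the Brauer--Manin/Tate--Nakayama dictionary and character orthogonality---into exactly $\sum_\kappa\prod_v\CO_\gamma^{\kappa_v}(f_v^T)$, with no leftover class-number or automorphism-group factors. This is where the Wei--Xu machinery (Theorem~\ref{thm-wx}) does the real work, and where the bookkeeping between local conjugacy data and global Galois cohomology must be carried out carefully; the analogous point in \cite{JL24B, Lee} is handled under the hypothesis that $\chi(0)=1$ or $\chi$ splits at small residue characteristics, whereas here it must be done using the conjugation action of $G$ on $\wt{\fg}$ instead.
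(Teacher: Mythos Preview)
Your approach is essentially the same as the paper's: invoke Wei--Xu (Theorem~\ref{thm-wx}) to pass to an adelic integral over $X(\BA)^{\Br}$, decompose $X(\BA)^{\Br}$ into $G(\BA)$-orbits indexed by $\Im(\alpha)\subset\ker_\BA(X)$, and then use character orthogonality over $H^1(F,T(\BA_{\bar F})/T(\bar F))^\vee$ to rewrite the restricted sum as $\sum_\kappa\CO_\gamma^\kappa(f^T)$. Two small points of streamlining: your first paragraph (applying Theorem~\ref{thm-equi} to each $\Gamma$-orbit) is redundant, since Theorem~\ref{thm-wx} already packages the equidistribution into the adelic integral with the constant $C$ built in; and the factor $|\Br(X)/\Br(F)|$ in Wei--Xu's formula is exactly what cancels the $|H^1(F,T(\BA_{\bar F})/T(\bar F))|^{-1}$ from character orthogonality (via Lemma~\ref{lem-duality-picard}), so no further measure bookkeeping is needed beyond what is already absorbed into $C$.
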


\begin{remark}
This result is obtained in \cite{Lee} for  $X = G_\gamma \bs G$ where $G$ is an arbitrary semisimple and simply connected group and $\gamma \in G(F)$ is elliptic under the assumption of the equi-distribution property. In fact, if we assume the equi-distribution property, then 
the above theorem can be also generalized using our main result
in Section \ref{sec: counting-formula}.
\end{remark}

We now give the proof of Theorem \ref{thm-orbit}. The proof is
completely parallel to the proof of \cite[Theorem 1.1]{Lee} 
which is based on the result of Wei-Xu \cite[Theorem 4.3]{WX}
for elliptic orbits.  Note that we have the following
generalization for regular semisimple orbits.

\begin{thm}\label{thm-wx}
We have
\[N(X,\CT) \sim  v_\chi  C_0^\Tam \#\Pic(T) (\log \CT)^r\int_{X(\BA)^{\Br(X)}} f_{X}^\CT(x) dx .\]
We refer to Theorem \ref{thm-wx-general} for notations. The measure $dx$ is given by the measure $d^0 \mu$ on $\wt{T}(\BA) \bs \wt{G}(\BA)$ (see Section \ref{Haar}).
\end{thm}
\begin{proof}
This follows from Theorem \ref{thm-wx-general}.
The result requires that the homogeneous space $X$ satisfies the equi-distribution property
in Theorem \ref{thm: equi-distribution}. Moreover, note that  the measure in Theorem 
\ref{thm-wx-general} is the Tamagawa measure $d^\Tam \mu$ on $X(\BA)$.
Here, we consider the measure $d^0 \mu$ so that there is the constant $C = \frac{d^\Tam \mu}{d^0 \mu}$.
\end{proof}

Consider the following exact sequence of $F$-varieties
\[1 \longrightarrow T \longrightarrow G \longrightarrow X \longrightarrow 1.\]
This gives rise to the following commutative diagram of pointed sets
\[
\begin{tikzcd}
G(F)  \ar[r]  &  X(F)   \ar[r]\ar[d] &H^1(F, T) \ar[r]\ar[d] & H^1(F,G)\ar[d]\\
G(\BA) \ar[r] & X(\BA) \ar[r] &  H^1(F,T(\BA_{\bar{F}})) \ar[r]&  H^1(F,G(\BA_{\bar{F}}))
\end{tikzcd}
\]
where the connecting map $X(\BA) \ra H^1(F,T(\BA_{\bar{F}}))$ is given by $\inv(\cdot,\gamma)=\oplus_{v}\inv_v(\cdot,\gamma) $.

Denote by 
\[
  \ker_F(X)=\ker( H^1(F,T)\rightarrow H^1(F,G)), \qquad
  \ker_\A(X)=\ker(H^1\bigl(F,T(\A_{\overline F})\bigr)\rightarrow H^1\bigl(F,G(\A_{\overline F})\bigr)).
\]
Then the above morphism $H^1(F,T) \ra H^1(F,T(\BA_{\bar{F}}))$ induces the following one
\[
  \alpha : \ker_F(X) \longrightarrow \ker_\A(X).
\]

\begin{lem}
	We have
	\[X(\BA)^{\Br(X)} = \left\{ x \in X(\BA) \Big| \inv(x,\gamma) \in \Im(\alpha) \right\}.\]
         In particular, we have
\[
\xymatrix@C=3pc@R=3pc{
X(\BA)/G(\BA)  \ar[r]^\sim & \ker_\BA(X) \\
X(\BA)^\Br/G(\BA) \ar[r]^\sim \ar@{^{(}->}[u] & \Im(\alpha) \ar@{^{(}->}[u]
}
\]
\end{lem}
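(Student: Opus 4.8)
The plan is to prove the displayed set equality; the commutative square then follows formally from it together with the isomorphism $X(\BA)/G(\BA)\xrightarrow{\sim}\ker_\BA(X)$. Write $\widehat T=X^{*}(T_{\bar F})$ for the geometric character group of $T$, a $\Gamma_F$-module. First I would observe that, since $G=\SL_n$ is semisimple and simply connected, $H^1(F_v,G)=1$ for every place $v$ and $H^1(F,G)=1$; hence the cohomology exact sequence of $1\to T\to G\to X\to 1$ (an extension with abelian kernel) yields $X(F_v)/G(F_v)\xrightarrow{\sim}H^1(F_v,T)$ for each $v$, and, passing to restricted products, $X(\BA)/G(\BA)\xrightarrow{\sim}H^1(F,T(\BA_{\bar F}))=\ker_\BA(X)$ via $x\mapsto\inv(x,\gamma)$; likewise $\ker_F(X)=H^1(F,T)$ and $\alpha$ is just the localization map $H^1(F,T)\to H^1(F,T(\BA_{\bar F}))$. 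So it remains to identify the image of $X(\BA)^{\Br}$ inside $\ker_\BA(X)$ with $\Im(\alpha)$.

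Next I would compute the Brauer group and the type. Geometrically $\bar X\cong T\backslash\SL_n$ with $T$ a maximal torus, which is an affine-space bundle over the flag variety $B\backslash\SL_n$; hence $\Br(\bar X)=\Br(\overline{B\backslash\SL_n})=0$, while the type of the torsor $G\to X$ furnishes a canonical isomorphism $\Pic(\bar X)\cong\widehat T$ (Sansuc's exact sequence, using $\bar F[\bar G]^{\times}=\bar F^{\times}$ and $\Pic(\bar G)=0$, both valid since $G$ is simply connected). Since $X(F)\neq\emptyset$ (the base point $T\cdot e$), $\Br(F)\hookrightarrow\Br(X)$, and the Hochschild--Serre spectral sequence together with $H^3(\Gamma_F,\bar F^{\times})=0$ gives a canonical isomorphism $\Br(X)/\Br(F)\xrightarrow{\sim}H^1(F,\Pic\bar X)=H^1(F,\widehat T)$, say $\xi\mapsto b_\xi$. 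The same computation shows $\Br(G)=\Br(F)$, so the Brauer--Manin pairing with $\xi$ is invariant under the $G(\BA)$-action on $X(\BA)$ and, by global reciprocity, depends only on the class of $\xi$ modulo $\Br(F)$; it therefore descends to a pairing $H^1(F,\widehat T)\times H^1(F,T(\BA_{\bar F}))\to\BQ/\BZ$.

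The crux will be a local evaluation formula: for every place $v$ and every $x_v\in X(F_v)$,
\[
\inv_v\!\big(\xi(x_v)\big)=\big\langle b_\xi,\ \inv_v(x_v,\gamma)\big\rangle_v,
\]
where $\langle\cdot,\cdot\rangle_v\colon H^1(F_v,\widehat T)\times H^1(F_v,T)\to H^2(F_v,\Gm)=\BQ/\BZ$ is the local Tate cup-product pairing induced by the evaluation $\widehat T\otimes T\to\Gm$. This is the functoriality of the type construction of descent theory: the fibre of $G\to X$ over $x_v$ is a $T$-torsor whose class is $\inv_v(x_v,\gamma)$, and transporting it along the type isomorphism $\widehat T\xrightarrow{\sim}\Pic\bar X$ and cupping with $b_\xi$ recovers the image of $\xi$ in $\Br(F_v)$ (compare the computations underlying \cite{WX}; with the usual conventions at the archimedean places). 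Summing over $v$, for $x\in X(\BA)$ with $c:=\inv(x,\gamma)=(c_v)$ one gets $\sum_v\inv_v(\xi(x_v))=\sum_v\langle b_\xi,c_v\rangle_v$; hence $x\in X(\BA)^{\Br}$ if and only if $\sum_v\langle b,c_v\rangle_v=0$ for all $b\in H^1(F,\widehat T)$.

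Finally I would invoke the Poitou--Tate exact sequence for the torus $T$,
\[
H^1(F,T)\longrightarrow H^1(F,T(\BA_{\bar F}))\longrightarrow H^1(F,\widehat T)^{\vee},
\]
whose second arrow sends $c$ to the functional $b\mapsto\sum_v\langle b_v,c_v\rangle_v$; exactness identifies the set of $c$ satisfying the condition just obtained with the image of the first arrow, namely $\Im(\alpha)$. (One inclusion is also immediate from reciprocity: an element of $\ker_F(X)=H^1(F,T)$ is the class of an $F$-point $x_0\in X(F)$, and then $\sum_v\langle b,c_v\rangle_v=\sum_v\inv_v(\xi(x_0))=0$.) Together with the first paragraph this yields $X(\BA)^{\Br}/G(\BA)\xrightarrow{\sim}\Im(\alpha)$, which is the asserted equality, and the square commutes by construction. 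The main obstacle is the local evaluation formula --- matching the type of the torsor $G\to X$ with the Hochschild--Serre description of $\Br(X)/\Br(F)$ compatibly at every place; granting it, the rest is formal.
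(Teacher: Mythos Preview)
Your proposal is correct and is precisely the argument that lies behind the paper's citation: the paper's own proof merely invokes the commutative diagram (3.1) of Colliot-Th\'el\`ene--Xu \cite{CTX} and refers to \cite[Theorem~5.1]{Lee} for details, and what you have written is exactly that argument spelled out --- the Hochschild--Serre computation $\Br(X)/\Br(F)\cong H^1(F,\widehat T)$ (equivalently $\Pic(T)$), the local evaluation formula identifying the Brauer--Manin pairing with the local Tate pairing, and the Poitou--Tate sequence $H^1(F,T)\to\bigoplus_v H^1(F_v,T)\to H^1(F,\widehat T)^\vee$, which is the sequence the paper records (in the commented discussion and in Lemma~\ref{lem-duality-picard}) as $H^1(F,T)\to\bigoplus_v H^1(F_v,T)\to\Hom(\Pic(T),\BQ/\BZ)$. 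Your acknowledgment that the local evaluation formula is the crux is apt; that is exactly the content supplied by \cite{CTX}.
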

\begin{proof}
	This is from the study of the commutative diagram (3.1) in \cite{CTX}. This commutative diagram in \cite{CTX} holds 
    for very general
    homogeneous spaces. For more details, see the proof
	of \cite[Theorem 1.1]{Lee}.
\end{proof}

By Theorem \ref{thm-wx} and the above Lemma, we obtain
\[N(X,\CT) \sim  v_\chi C^\Tam_0 \#\Pic(T) (\log \CT)^r\sum_{a \in \Im(\alpha)}\int_{x_a \cdot G(\BA)} f_{X}^\CT(x) dx.\]
Here, for each $a \in \ker_\BA(X)$, $x_a \in X(\BA)$ represents the $G(\BA)$-orbit with $\inv(x_a,\gamma) = a$.

\begin{lem}\label{lem:exercise}
    There is an exact sequence of pointed sets
\[
  \ker_F(X) \stackrel{\alpha}{\lra} \ker_\BA(X) \stackrel{\pi}{\lra} H^1(F,T(\BA_{\bar{F}})/T(\bar{F})).
\]
Here, for each $a\in \ker_{\BA}(X)$ corresponding to 
the $G(\BA)$-orbit $x_a\cdot G(\BA)$ in $ X(\BA)$, $\pi(a)$
is the image of $\inv(x_a,\gamma)$ under the morphism
$H^1(F,T(\BA_{\bar{F}})) \ra H^1(F,T(\BA_{\bar{F}})/T(\bar{F}))$.
\end{lem}
\begin{proof}
     This is \cite[Exercise 5.4]{Kal}. As $\gamma$ is assumed
     to be elliptic there, we give a proof. 

     We have the following commutative diagram
     \[
\begin{tikzcd}
1  \ar[r]  &  \ker_F(X)   \ar[r]\ar[d,"\alpha"]  &H^1(F, T) \ar[r]\ar[d] & H^1(F,G)\ar[d,hook]\\
1 \ar[r]   &  \ker_\BA(X)    \ar[r]\ar[dr,"\pi"] &  H^1(F,T(\BA_{\bar{F}})) \ar[r]\ar[d] &  H^1(F,G(\BA_{\bar{F}})) \\
&   &   H^1(F,T(\BA_{\bar{F}})/T(\bar{F}))   & 
\end{tikzcd}
\]
Here, as $G$ is simply connected, $G$  satisfies the Hasse principle,
in other words, we have an injective map
$H^1(F,G) \hookrightarrow H^1(F,G(\BA_{\bar{F}}))$ (in fact, in our case $H^1(F,G) = 0$ as $G$ is  semisimple and simply connected). 
Moreover, note that the sequence
\[H^1(F,T) \lra H^1(F,T(\BA_{\bar{F}})) \lra H^1(F,T(\BA_{\bar{F}})/T(\bar{F}))\]
is exact.

Now, the result follows by a standard diagram chase.

\end{proof}

\begin{proof}[The proof of Theorem \ref{thm-orbit}]

Consider the following function on $H^1(F,T(\BA_{\bar{F}})/T(\bar{F}))$
\[\frac{1}{\# H^1(F,T(\BA_{\bar{F}})/T(\bar{F}))} \sum_{\kappa \in H^1(F,T(\BA_{\bar{F}})/T(\bar{F}))^\vee} \kappa.\]
By Lemma \ref{lem:exercise}, its pullback to $\ker_\BA(X)$ is the characteristic function on $\Im(\alpha)$. Therefore,
\[N(X,\CT) \sim v_\chi C_0^\Tam\sum_{a \in \ker_\BA(X)} \sum_{\kappa} \kappa(\pi(a)) \int_{x_a \cdot G(\BA)} f_{X}^\CT(x) dx (\log \CT)^r.\]
Here, we use Lemma \ref{lem-duality-picard}. Moreover, note that 
\begin{itemize}
	\item when $a$ runs over $\ker_\BA(X)$, $x_a$ runs over $G(\BA)$-conjugacy classes which are stable conjugate to $\gamma$.
	\item for each $a$, $\kappa(\pi(a)) = \kappa(\inv(x_a,\gamma))$.
\end{itemize}
Therefore,
\[N(X,\CT) \sim v_\chi C_0^\Tam\sum_\kappa \CO_\gamma^\kappa(f_X^\CT)(\log \CT)^r\]
where $\CO_\gamma^\kappa(f_X^\CT) = \prod_v \CO_\gamma^{\kappa_v}(f_{X,v}^\CT)$ with
\[\CO_\gamma^{\kappa_v}(f_{X,v}^\CT) = \sum_{\gamma' \sim_\st \gamma} \kappa_v(\inv(\gamma',\gamma)) \CO_{\gamma'}(f_{X,v}^\CT),
\quad \CO_{\gamma'}(f_{X,v}^\CT) = \int_{T_{\gamma'}(F_v) \bs G(F_v)} f_{X,v}^\CT(\gamma' \cdot g)dg.\]

If $v$ is archimedean, then $f_{X,v}^\CT$ is the characteristic function of $X(F_v,\CT)$ so that
\[\CO_{\gamma'}(f_{X,v}^\CT) = \Vol((\gamma' \cdot G(F_v)) \cap X(F_v,\CT)).\]
If $v$ is nonarchimedean, then $f_{X,v}^\CT = f_{X,v}$ is the characteristic function of $X(\CO_v)$ so that
\[\CO_{\gamma'}(f_{X,v}) = \int_{T_{\gamma'}(F_v) \bs G(F_v)} 1_{X(\CO_v)}(\gamma'\cdot g)dg = \int_{T_{\gamma'}(F_v) \bs G(F_v)} 1_{\wt{\fg}(\CO_v)}(\gamma'\cdot g)dg
 = \CO_{\gamma'}(1_{\wt{\fg}(\CO_v)}).\]
 We are done.
\end{proof}

\section{Archimedean local orbital integrals}

Let $\kappa \in H^1(F,T(\BA_{\bar{F}})/T(\bar{F}))^\vee$. By Corollary \ref{cor:kappa}, the pair $(\gamma,\kappa)$ determines
a subfield $E$ of $K$ containing $F$ with $E/F$ cyclic. By Proposition \ref{ell-endo}, the associated endoscopic subgroup   is 
$H = \Res_{E/F}^1 \GL_m$ with $[E:F]m = n$. Denote by $\wt{H} = \Res_{E/F} \GL_m$. 

In the next two sections, we study the $\kappa$-orbital integral
\[\CO_\gamma^\kappa(f^\CT) = \CO_\gamma^{\kappa_\infty}(f_\infty^\CT)\CO_\gamma^{\kappa_\fin}(f_\fin)\]
where $f^\CT \in C_c^\infty(\wt{g}(\BA))$ is the
test function in Theorem \ref{thm-orbit}.

First, we consider the archimedean case.

\begin{prop}\label{arch}
   The archimedean $\kappa$-orbital integral $\CO_\gamma^{\kappa_\infty}(f_\infty^\CT) = 0$ unless $\kappa_\infty = 1$, or
   equivalently, $E/F$ is unramified at all the archimedean places. If $E/F$ is unramified at archimedean places,
   \[\CO_\gamma^{\kappa_\infty}(f_\infty^\CT) \sim \frac{w_n \Vol(U_\infty)}{\sqrt{|\Delta_\chi|_\infty}} \cdot \CT^{[F:\BQ] d}.\]
   Here, 
   \begin{itemize}
   \item  $d = n(n-1)/2$.
   \item $w_n = \prod_{v|\infty} w_{n,v}$  with $w_{n,v}$ the volume for the unit ball 
	   \[B_{n,v} = \left\{ (x_j) \in F_v^{d} \Big| \sum_j |x_j|_v^{2/e_v} \leq 1 \right\}.\]
       Here, if $v$ is real, then $e_v = 1$ and if $v$ is complex, then
       $e_v = 2$.
   \item $U_\infty = \prod_{v|\infty} U_v$ is a maximal compact subgroup of $\GL_n(F_\infty)$ and the
	   measure defining $\Vol(U_\infty)$ is normalized at the end of  Section \ref{Haar}.
    \item $\Delta_\chi \in F^\times$ is the discriminant
   of $\chi(x)$.
   
   \end{itemize}
\end{prop}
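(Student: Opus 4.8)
Since $f_\infty^T=\otimes_{v\mid\infty}f_v^T$, the archimedean orbital integral factors as $\CO_\gamma^{\kappa_\infty}(f_\infty^T)=\prod_{v\mid\infty}\CO_\gamma^{\kappa_v}(f_v^T)$, so everything is local. If $v$ is complex, then $H^1(F_v,T)=1$, so $\kappa_v=1$ automatically and the stable class of $\gamma$ is a single $G(F_v)$-class. If $v$ is real, Proposition~\ref{prop: archi-kappa} shows that the stable class splits into two $G(F_v)$-classes exactly when $n=2k$ and $K_v\cong\BC^{k}$, in which case $H^1(F_v,T)\cong\BZ/2$; otherwise there is a single class and $\kappa_v=1$. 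Hence $\kappa_\infty\neq1$ forces some real $v$ with $\kappa_v$ nontrivial and the stable class splitting, and there $\CO_\gamma^{\kappa_v}(f_v^T)=\CO_{\gamma_1}(f_v^T)-\CO_{\gamma_2}(f_v^T)$, where $\gamma_1$ is the class of $\gamma$ and $\gamma_2$ the other one. I will show this vanishes by a symmetry argument. Let $w=\diag(1,-1,1,\dots,1)\in\RO_n(\BR)$; since $\det w=-1\notin\RN_{K_v/F_v}(K_v^\times)=\BR_{>0}$, conjugation by $w$ sends each $G(F_v)$-class inside the stable class of $\gamma$ to the other one, hence interchanges the two orbits $\gamma_1\cdot G(F_v)$ and $\gamma_2\cdot G(F_v)$ in $X(F_v)$. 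As $w\in\RO_n(\BR)$, this conjugation is an isometry for the Hilbert--Schmidt norm and preserves the quotient measure $d^0\mu$, so it carries $(\gamma_1\cdot G(F_v))\cap X(F_v,T)$ bijectively and measure-preservingly onto $(\gamma_2\cdot G(F_v))\cap X(F_v,T)$; therefore $\CO_{\gamma_1}(f_v^T)=\CO_{\gamma_2}(f_v^T)$, the local factor vanishes, and $\CO_\gamma^{\kappa_\infty}(f_\infty^T)=0$. Finally $\kappa_\infty=1$ iff $\kappa_v=1$ for all $v\mid\infty$, which by Proposition~\ref{prop-arch-kappa} is equivalent to $E/F$ being unramified at every archimedean place.

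Assume now $\kappa_\infty=1$. Using the partition $X(F_v)=\bigsqcup_{\gamma'\sim_\st\gamma}\gamma'\cdot G(F_v)$ into open pieces carrying the restrictions of $d^0\mu$, we get for each $v\mid\infty$
\[\CO_\gamma^{\kappa_v}(f_v^T)=\sum_{\gamma'\sim_\st\gamma}\CO_{\gamma'}(f_v^T)=\sum_{\gamma'\sim_\st\gamma}\Vol\bigl((\gamma'\cdot G(F_v))\cap X(F_v,T)\bigr)=\Vol\bigl(X(F_v,T)\bigr).\]
So the proposition is reduced to the purely archimedean volume estimate
\[\Vol\bigl(X(F_v,T)\bigr)\;\sim\;\frac{w_{n,v}\,\Vol(U_v)}{\sqrt{|\Delta_\chi|_v}}\;T^{n(n-1)/2}\qquad(v\mid\infty),\]
after which one multiplies over the $d_\infty$ archimedean places and uses $|\Delta_\chi|_\infty=\prod_{v\mid\infty}|\Delta_\chi|_v$.

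To establish this estimate I will use the decomposition $\GL_n(F_v)=S\,A_0\,N\,U$ of Section~\ref{Haar} with $\gamma$ in block normal form, so that $S=\wt T(F_v)$, together with the explicit measure formula there; since conjugation by $k\in U$ preserves $\|\cdot\|_v$ and $\Vol(X(F_v,T))$ only depends on the $\GL_n(F_v)$-conjugacy class of $\gamma$,
\[\Vol\bigl(X(F_v,T)\bigr)=\Vol(U_v)\int_{A_0}\int_{N}\mathbf 1_{\{\|(an)^{-1}\gamma(an)\|_v\le T\}}\Bigl(\prod_i a_i\Bigr)\,da\,dn.\]
It then remains to find the leading term of this $\tfrac{n(n-1)}{2}$-dimensional integral, which is exactly the orbit-volume computation of Eskin--Mozes--Shah and Shah in the real case (\cite{EMS}, \cite{Sha}; see also \cite[Lemma~A.2]{Lee}): as $(a,n)\to\infty$ the matrix $(an)^{-1}\gamma(an)$ escapes only through its strictly ``upper-triangular'' entries, and rewriting $(a,n)$ in terms of those entries transforms, to leading order, the truncation $\{\|\cdot\|_v\le T\}$ into the ball $\{x\in F_v^{\,n(n-1)/2}:\sum_j|x_j|_v^2\le T^2\}$ up to a lower-order remainder; the Jacobian of this substitution converges to the Weyl-type density $\prod_{i<j}|\lambda_i-\lambda_j|_v^{-1}=|\disc(\chi)|_v^{-1/2}=|\Delta_\chi|_v^{-1/2}$ in the roots $\lambda_1,\dots,\lambda_n$ of $\chi$, while the ball has volume $w_{n,v}T^{n(n-1)/2}$. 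For complex $v$ the same computation applies with the self-dual measure on $F_v$ and the normalization $|\cdot|_v=|\cdot|^{2}$ from Section~\ref{Haar}. The main obstacle is precisely this last step -- making the change of variables explicit, controlling the remainder, and redoing it at a complex place with the complex normalizations; everything else is bookkeeping with identifications already in place.
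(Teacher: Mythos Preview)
Your argument is correct and follows essentially the same route as the paper. Both proofs reduce to a local computation, use Proposition~\ref{prop: archi-kappa} to analyze $H^1(F_v,T)$, observe that the two $G(F_v)$-orbits (when they exist) have equal truncated volume via conjugation by an element of determinant $-1$, and then reduce the $\kappa_v=1$ case to the volume asymptotic $\Vol(X(F_v,T))\sim w_{n,v}\Vol(U_v)|\Delta_\chi|_v^{-1/2}T^{n(n-1)/2}$, which the paper records separately as Lemma~\ref{lem: volumn-estimation}. Your explicit choice $w=\diag(1,-1,1,\dots,1)\in\RO_n(\BR)$ is a small improvement in clarity over the paper's ``some $g_0\in\GL_n(F_v)$ with $\det g_0=-1$'', since orthogonality is what makes conjugation by $g_0$ preserve both the Hilbert--Schmidt norm and the measure; the paper leaves this implicit.
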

\begin{proof}
	The proof is reduced to the local case. Let $v$ be an archimedean place of $F$.

	By our choice of $f_v^\CT$, we have
	\[\CO_\gamma^{\kappa_v}(f_v^\CT) = 
	\sum_{\gamma' \sim_\st \gamma} \kappa_v(\inv_v(\gamma',\gamma)) \Vol( (\gamma'\cdot G(F_v)) \cap X(F_v,\CT)). \]
    
    We have studied the cohomology group $H^1(F_v,T)$ in Proposition \ref{prop: archi-kappa}. 
    The cohomology group $H^1(F_v,T)$ is trivial unless $v$ is real, $n$ is even and $T_v \cong (\BC^\times)^{n/2}$. When
    $H^1(F_v,T)$ is nontrivial, $H^1(F_v,T) \cong \BZ/2\BZ$. In this case, if $\gamma'$ is  another conjugacy class  which
    is stable conjugate to $\gamma$, we have
    \[\Vol( (\gamma' \cdot G(F_v)) \cap X(F_v,\CT)) = \Vol( (\gamma \cdot G(F_v)) \cap X(F_v,\CT)).\]
    In fact, we note that the two orbits $\gamma' \cdot G(F_v)$ and $\gamma \cdot G(F_v)$ are conjugate by some $g_0 \in \GL_n(F_v)$
    with $\det g_0 = -1$ and the set $X(F_v,\CT)$ is stable under the conjugation of $g_0$. 

    Therefore, for all the cases,
    \[\CO_{\gamma}^{\kappa_v}(f_v^\CT) = \Vol( (\gamma \cdot G(F_v)) \cap X(F_v,\CT)) \sum_{\gamma' \sim_\st \gamma} \kappa_v(\inv_v(\gamma',\gamma)).\]
    In particular, $\CO_{\gamma}^{\kappa_v}(f_v^\CT) = 0$ unless $\kappa_v = 1$. By Proposition \ref{prop-arch-kappa}, for a real place $v$, $\kappa_v \not= 1$ if and only if
    $[E:F] = 2d$ is even with $E_v \cong \BC^d$. As
    $E/F$ is Galois, $\kappa_v \not=1$ if and only if $E/F$ is ramified at $v$. If $\kappa_v = 1$, then
    \[\CO_{\gamma}^{\kappa_v}(f_v^\CT) = \Vol( X(F_v,\CT)).\]
    We shall study the volume $\Vol(X(F_v,\CT))$ in the next lemma.

\end{proof}

\begin{lem}\label{lem: volumn-estimation}
	We have
	\[\Vol(X(F_v,\CT)) \sim \frac{w_{n,v} \Vol(U_v)}{\sqrt{|\Delta_\chi|_v}} \cdot \CT^{e_v d}.\]
\end{lem}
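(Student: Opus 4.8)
The plan is to reduce to a single archimedean place $v$ and to evaluate the volume there by means of the Iwasawa-type decomposition set up in Section~\ref{Haar}. Since $X(F_v)=\wt{T}(F_v)\backslash\wt{G}(F_v)=K_v^{\times}\backslash\GL_n(F_v)$ and the Hilbert--Schmidt norm $\|\cdot\|_v$ is invariant under left and right multiplication by $U_v$, the decomposition $\GL_n(F_v)=S A_0 N U_v$ (stated for real $v$; the complex case is the $r_2=0$ analogue) together with the measure formula defining $U_v$ gives
\[
\Vol(X(F_v,T))=\Vol(U_v)\int_{A_0}\int_N\mathbf{1}\!\left[\,\bigl\|\,n^{-1}(a^{-1}\gamma a)\,n\,\bigr\|_v\le T\,\right]\Bigl(\prod_{i=1}^{r_2}a_i\Bigr)\,dn\,da ,
\]
because $\gamma\cdot(ank)=k^{-1}\bigl(n^{-1}(a^{-1}\gamma a)n\bigr)k$ and this has $\|\cdot\|_v$ independent of $k\in U_v$.

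Next I would perform a change of variables on $A_0\times N$. The map $(a,n)\mapsto Y:=n^{-1}(a^{-1}\gamma a)n$ is a diffeomorphism onto the block-lower-triangular slice of the stable orbit of $\gamma$: the diagonal blocks of $Y$ are those of $a^{-1}\gamma a$, namely the $2\times2$ blocks $\left(\begin{smallmatrix}x_i & a_i^{-2}y_i\\ -a_i^2y_i & x_i\end{smallmatrix}\right)$ and the scalars $\mu_j$, from which $a$ is read off, while the strictly-block-lower part of $Y$ then determines $n$ uniquely. Using as coordinates the quantities $u_i:=a_i^2y_i$ $(1\le i\le r_2)$ and the entries $z_k$ of the strictly-block-lower part of $Y$, I expect the invariant measure to transform as $\bigl(\prod_i a_i\bigr)\,da\,dn=|\Delta_\chi|_v^{-1/2}\,\prod_i du_i\,\prod_k dz_k$. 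Indeed, $n\mapsto(\text{strictly-block-lower part of }Y)$ is a polynomial automorphism of affine space, so its Jacobian is the constant $\prod_{p>q}\bigl|\det\bigl(Z\mapsto(a^{-1}\gamma a)_{pp}Z-Z(a^{-1}\gamma a)_{qq}\bigr)\bigr|$, a product of differences $\rho_k-\rho_l$ of the roots of $\chi$ (nonzero since the roots are distinct, and independent of $a$ since conjugation preserves eigenvalues); the substitution $a_i\mapsto u_i$ contributes the remaining factors; and, together with the diagonal factors $(\rho_k-\bar\rho_k)$, these reassemble exactly into $\Delta_\chi=\prod_{k<l}(\rho_k-\rho_l)^2$ (here $\rho_1,\dots,\rho_n$ denote the roots of $\chi$).

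It remains to extract the asymptotics. In the new coordinates one has $\|Y\|_v^2=C_0+\sum_i u_i^2+\sum_k z_k^2+(\text{terms }\asymp a_i^{-4}y_i^2)$ with $C_0$ constant, and the last terms stay bounded once the $u_i$ are bounded away from $0$. Hence $\{\,\|Y\|_v\le T\,\}$ is, outside a region of volume $O(T^{n(n-1)/2-1})$ concentrated near the locus $u_i=0$, a Euclidean ball of radius $\sim T$ in dimension $r_2+\dim N=n(n-1)/2$; its volume is $\sim w_{n,v}\,T^{n(n-1)/2}$, and multiplying by $\Vol(U_v)\,|\Delta_\chi|_v^{-1/2}$ yields the claimed equivalence. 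A complex place $v$ is handled by the same computation with $r_2=0$, the one point needing attention being that $|\cdot|_v$ is the square of the usual absolute value on $\BC$ and $dx_v$ is the self-dual measure of Section~\ref{Haar}; with these conventions $w_{n,v}$, the volume of the unit $\|\cdot\|_v$-ball in $F_v^{n(n-1)/2}$, is the correct constant.

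The step I expect to be the real obstacle is the bookkeeping of constants: verifying that the several Jacobians and powers of $2$ collapse precisely to $|\Delta_\chi|_v^{-1/2}$ (keeping track of the normalization of the $U_v$-measure from Section~\ref{Haar}), and bounding rigorously the non-dominant contributions---the region where some $u_i$ is small, and any loci where the $S A_0 N U_v$-parametrization degenerates---to be of order $O(T^{n(n-1)/2-1})$.
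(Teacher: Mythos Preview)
Your approach follows the Eskin--Mozes--Shah strategy for the general real case, which the paper in fact cites rather than proves: the paper's own argument treats only the \emph{split} situation (covering complex $v$, and real $v$ with $r_2=0$), where the stabilizer of $\gamma$ is the full diagonal torus $A$, the factor $A_0$ is trivial, and a single change of variables $n\mapsto w=n^{-1}\gamma n$ on the strictly-triangular entries has Jacobian exactly $|\Delta_\chi|_v^{1/2}$. The region $\|w\|_v\le T$ is then immediately an asymptotic Euclidean ball in $F_v^{n(n-1)/2}$ and the constant drops out cleanly. By contrast you retain the $A_0$-factor and the extra coordinates $u_i=a_i^2y_i$, which is indeed what one must do for $r_2>0$, at the cost of the bookkeeping you flag in your last paragraph.

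One concrete point where your sketch is not yet correct: each $u_i=a_i^2y_i$ ranges only over a half-line (since $a_i>0$ and $y_i$ has a fixed nonzero sign), so in your $(u,z)$-coordinates the region $\{\|Y\|_v\le T\}$ is a $2^{-r_2}$-orthant slice of the ball, not the full ball. The asymptotic volume of that slice is $\sim w_{n,v}T^{n(n-1)/2}/2^{r_2}$, not $w_{n,v}T^{n(n-1)/2}$ as you assert. This missing $2^{r_2}$ must be recovered---either from genuine multiplicity in the $SA_0NU$ parametrization of $S\backslash\GL_n(\BR)$ (the map $(a,n,k)\mapsto Sank$ is not globally injective; for instance when $n=2$, $r_2=1$ one has $Sak=S\,\diag(a^{-1},a)\,wk$ with $w\in\SO(2)$ the Weyl element), or from the precise normalization of the $U_v$-measure in Section~\ref{Haar}. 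You should trace this factor explicitly rather than fold it into ``powers of $2$''. The paper avoids the issue altogether by doing only the split case and deferring the rest to \cite{EMS}.
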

\begin{proof}
   The volume $\Vol( X(F_v,\CT))$ is studied well in \cite[page 280-282]{EMS} when $v$ is real. 
   Here, we just repeat the proof for the case $T$ is split which includes the case $F_v = \BC$.
  
   Assume
\[\chi(x) = (x-\mu_1)\cdots (x-\mu_n) \in F_v[x].\] 
   Up to conjugation by $\GL_n(F_v)$, we may assume
\[\gamma = \diag\left[ \mu_1,\ldots,\mu_n \right] \in \GL_n(F_v).\]
Then the stabilizer of $\gamma$ in $\GL_n(F_v)$ is the diagonal torus $A$. Denote by $N$ the unipotent subgroup of the 
Borel subgroup consisting of upper-triangular matrices. Denote by $U_v$ the maximal compact subgroup of $\GL_n(F_v)$, which is
$\RO(n)(\BR)$ when $v$ is real or $\RU(n)$ when $v$ is complex. As in the end of Section \ref{Haar}, the measure on $U_v$ is normalized as
\[\int_{A(F_v) \bs \GL_n(F_v)} f(g)dg = \int_{N(F_v)} \int_{U_v} f(nk) dn dk, \quad  f \in C_c(A(F_v) \bs \GL_n(F_v)).\]

Note that the norm $||\cdot||_v$ is invariant under the conjugate
action of $U_v$. We have
\[
\begin{aligned}
	X(F_v,\CT) &= \left\{ g \in A(F_v) \bs \GL_n(F_v) \Big| ||\gamma \cdot g||_v < \CT \right\} \\
	&= \left\{ (n,k) \in N(F_v) \times U_v \Big| ||n^{-1} \gamma n||_v < \CT \right\} \\
	&= \left\{ (n,k) \in N(F_v) \times U_v \Bigg| \sum_{1 \leq i \leq n} |\mu_i|_v^{2/e_v} + \sum_{1 \leq i < j \leq n} |w_{ij}|_v^{2/e_v} < \CT^2 \right\}.
\end{aligned}\]
Here, we denote by $w = n^{-1} \gamma n$. In fact, we note that for $i<j$
\[w_{ij} = (\mu_i-\mu_j)n_{ij} + w_{ij}'\]
where the number $w_{ij}'$ depends only on $\gamma$ and those $n_{kl}$ with $k < \ell$ and $\ell -k < j-i$. In particular, the Jacobian 
of the transformation from $(n_{ij})_{i < j}$ to $(w_{ij})_{i<j}$ is 
\[J = \prod_{1 \leq i<j \leq n} |\mu_i - \mu_j|_v = |\Delta_\chi|_v^{1/2}.\]
Therefore, we obtain that
\[\Vol(X(F_v,\CT)) \sim \frac{w_{n,v}\Vol(U_v)}{\sqrt{|\Delta_\chi|_v}} \cdot \CT^{e_v d}.\]

\end{proof}

\section{Nonarchimedean local orbital integrals}

Next, we consider the nonarchimedean case. We keep the same notations as in the previous two sections. 
In particular, we fix a finite place $v$ and 
a character $\kappa_v$ on $H^1(F_v,T)$. Let $\gamma \in \wt{\fg}(\CO_v) \cap \wt{G}(F_v)$ be regular semisimple with
the characteristic polynomial $\chi(x)$.

Firstly, we relate the $\kappa_v$-orbital integrals to certain
twisted orbital integrals.

\begin{prop}\label{lem:kappa-inv}
Let $\gamma \in \widetilde G(F_v)$ be regular  semisimple.
Let $K_v = F_v[\gamma]$, which is then an \'etale algebra of degree $n$ over $F_v$.
Let $\wt{T_\gamma}$ (resp.\ $T_\gamma$) be the centralizer of
$\gamma$ in $\widetilde G$ (resp.\ in $G$). Let $\gamma' \in \wt{G}(F_v)$ be stably conjugate to $\gamma$ (under the action of $G$), and let
$g_{\gamma'} \in \widetilde G(F_v)$ be such that
\[
  g_{\gamma'}^{-1}\,\gamma\,g_{\gamma'} \;=\; \gamma'.
\]
Let $\varepsilon_v: F_v^\times \to \BC^\times$ be a character which is
trivial on $\RN_{K_v/F_v}K_v^\times$, and let
\[
  \kappa_v : H^1(F_v,T_\gamma) \longrightarrow \BC^\times
\]
be the character attached to $\varepsilon_v$ via the canonical isomorphism
$H^1(F_v,T_\gamma) \simeq F_v^\times / \RN_{K_v/F_v}K_v^\times$.
Then
\[
  \kappa_v\bigl(\inv(\gamma',\gamma)\bigr)
  \;=\;
  \varepsilon_v\bigl(\det(g_{\gamma'})\bigr).
\]
In particular, for any $f \in C_c^\infty(\wt{\fg}(F_v))$,
\[ \CO_\gamma^{\kappa_v}(f) = \CO_{\gamma}^{\varepsilon_v}(f) = \int_{\wt{T_\gamma}(F_v) \bs \wt{G}(F_v)} 
f(\gamma \cdot g)\varepsilon_v(\det g)dg.\]
Here, the measure on $\wt{T_\gamma}(F_v) \bs \wt{G}(F_v) = K_v^\times \bs \GL_n(F_v)$ is the one given in Section \ref{Haar}.
\end{prop}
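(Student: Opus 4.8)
The plan is to reduce the identity to a clean computation of the cohomological invariant $\inv(\gamma',\gamma)$ together with a decomposition of the larger quotient $\wt{T_\gamma}(F_v)\bs\wt G(F_v)=K_v^\times\bs\GL_n(F_v)$ into $G(F_v)$-orbits, on which the twist $\varepsilon_v\circ\det$ becomes constant. First I would record the relevant tori: since $\gamma$ is regular semisimple, $\wt{T_\gamma}\cong\Res_{K_v/F_v}\Gm$ and $T_\gamma=\ker(\Nm\colon\Res_{K_v/F_v}\Gm\to\Gm)$, where on $F_v$-points $\Nm$ is the determinant $K_v^\times\to F_v^\times$. From the short exact sequence $1\to T_\gamma\to\Res_{K_v/F_v}\Gm\xrightarrow{\Nm}\Gm\to1$, Shapiro's lemma and Hilbert~90 give $H^1(F_v,\Res_{K_v/F_v}\Gm)=0$, and $\Nm$ is surjective on $\overline{F_v}$-points; hence the connecting map furnishes the \emph{canonical isomorphism} $\partial\colon F_v^\times/\Nm_{K_v/F_v}(K_v^\times)\xrightarrow{\ \sim\ }H^1(F_v,T_\gamma)$, sending $c$ to the class of $\sigma\mapsto s^{-1}\sigma(s)$ for any $s\in\wt{T_\gamma}(\overline{F_v})$ with $\Nm(s)=c$. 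By definition $\kappa_v=\varepsilon_v\circ\partial^{-1}$.

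Next I would compute $\inv(\gamma',\gamma)$. Write $\gamma'=g^{-1}\gamma g$ with $g\in G(\overline{F_v})$, so that $\inv(\gamma',\gamma)$ is the class of $\sigma\mapsto g\sigma(g)^{-1}$. Since also $g_{\gamma'}^{-1}\gamma g_{\gamma'}=\gamma'$, the element $s:=g_{\gamma'}g^{-1}$ centralizes $\gamma$, i.e.\ $s\in\wt{T_\gamma}(\overline{F_v})$, and $\det g=1$ forces $\Nm(s)=\det(s)=\det(g_{\gamma'})$. Substituting $g=s^{-1}g_{\gamma'}$ and using that $g_{\gamma'}\in\wt G(F_v)$ is Galois-fixed, the cocycle collapses to $g\sigma(g)^{-1}=s^{-1}\sigma(s)$, so $\inv(\gamma',\gamma)=\partial(\det g_{\gamma'})$ and therefore $\kappa_v(\inv(\gamma',\gamma))=\varepsilon_v(\det g_{\gamma'})$. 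I would also note this is independent of the choice of $g_{\gamma'}$: another choice differs by right multiplication by an element of $\wt{T_\gamma}(F_v)=K_v^\times$, whose determinant lies in $\Nm_{K_v/F_v}(K_v^\times)=\ker\varepsilon_v$.

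For the integral, view $\wt{T_\gamma}(F_v)\bs\wt G(F_v)$ as a right $G(F_v)$-space. Its orbits are indexed by $\wt{T_\gamma}(F_v)\bs\wt G(F_v)/G(F_v)\cong F_v^\times/\Nm_{K_v/F_v}(K_v^\times)$ via $\det$; pick $g_\delta\in\wt G(F_v)$ with $\det g_\delta=\delta$ for $\delta$ running over this quotient, and set $\gamma_\delta=g_\delta^{-1}\gamma g_\delta$. The point $\wt{T_\gamma}(F_v)g_\delta$ has $G(F_v)$-stabilizer $g_\delta^{-1}\wt{T_\gamma}(F_v)g_\delta\cap G(F_v)=T_{\gamma_\delta}(F_v)$, so $h\mapsto\wt{T_\gamma}(F_v)g_\delta h$ identifies $T_{\gamma_\delta}(F_v)\bs G(F_v)$ with the orbit $\wt{T_\gamma}(F_v)g_\delta G(F_v)$ — measure-preservingly, by the very normalization of $d^0\mu$ fixed in Section~\ref{Haar}. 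As $\gamma_\delta$ ranges over the $G(F_v)$-conjugacy classes stably conjugate to $\gamma$ (each exactly once, since $H^1(F_v,\SL_n)=1$ and these classes are a torsor under $H^1(F_v,T_\gamma)=F_v^\times/\Nm(K_v^\times)$), I may take $g_{\gamma_\delta}=g_\delta$; then for $g=g_\delta h$ one has $\det g=\delta$ and $\gamma\cdot g=\gamma_\delta\cdot h$, whence
\[
\int_{\wt{T_\gamma}(F_v)\bs\wt G(F_v)} f(\gamma\cdot g)\,\varepsilon_v(\det g)\,dg
=\sum_\delta\varepsilon_v(\delta)\int_{T_{\gamma_\delta}(F_v)\bs G(F_v)} f(\gamma_\delta\cdot h)\,dh
=\sum_{\gamma'\sim_\st\gamma}\varepsilon_v(\det g_{\gamma'})\,\CO_{\gamma'}(f),
\]
which by the second step equals $\sum_{\gamma'\sim_\st\gamma}\kappa_v(\inv(\gamma',\gamma))\,\CO_{\gamma'}(f)=\CO_\gamma^{\kappa_v}(f)$.

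I expect the cohomological steps to be essentially routine once the connecting-map convention above is fixed. The main obstacle, and the place needing genuine care, is the bookkeeping in the last paragraph: verifying that the $G(F_v)$-orbits inside $K_v^\times\bs\GL_n(F_v)$ are in \emph{measure-preserving} bijection with the homogeneous spaces $T_{\gamma'}(F_v)\bs G(F_v)$ used to define the ordinary orbital integrals $\CO_{\gamma'}(f)$ — this is exactly the compatibility of Haar-measure normalizations recorded just before Theorem~\ref{thm-orbit} — and that the parametrization of stable-to-rational conjugacy classes matches the index set $F_v^\times/\Nm_{K_v/F_v}(K_v^\times)$ without overcounting.
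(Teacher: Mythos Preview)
Your proposal is correct and follows essentially the same approach as the paper: both compute $\inv(\gamma',\gamma)$ by writing the $G(\overline{F_v})$-conjugator as a $\wt{T_\gamma}(\overline{F_v})$-translate of the rational $g_{\gamma'}$ and identifying the resulting cocycle with the image of $\det(g_{\gamma'})$ under the connecting map, then decompose the integral over $\wt{T_\gamma}(F_v)\bs\wt G(F_v)$ via the double coset $\wt{T_\gamma}(F_v)\bs\wt G(F_v)/G(F_v)\cong F_v^\times/\Nm_{K_v/F_v}(K_v^\times)$. Your write-up is in fact slightly cleaner—the paper works with $z=gg_{\gamma'}^{-1}=s^{-1}$ and ends up with a double inverse $\delta(\det(g_{\gamma'})^{-1})^{-1}$ before arriving at $[\det(g_{\gamma'})]$—and you make explicit both the well-definedness under changing $g_{\gamma'}$ and the measure-preserving identification of each $G(F_v)$-orbit with $T_{\gamma_\delta}(F_v)\bs G(F_v)$, which the paper absorbs into the phrase ``By our normalization of Haar measures.''
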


\begin{proof}
In the proof, we omit the subscript $v$. 
Let $\wt{T_\gamma}$ (resp.\ $T_\gamma$) be the centralizer of $\gamma$ in
$\widetilde G = \GL_n$ (resp.\ $G = \SL_n$).
Then $\wt{T_\gamma} \simeq \mathrm{Res}_{K/F}\mathbb G_m$, and we have
a short exact sequence of $F$–tori
\[
  1 \longrightarrow T_\gamma
    \longrightarrow \widetilde T_\gamma
    \xrightarrow{\det} \mathbb G_m
    \longrightarrow 1.
\]
By Hilbert~90, $H^1(F,\wt{T_\gamma}) = 0$. Passing to $F$–points
and the Galois cohomology yields an exact sequence
\[
  1 \longrightarrow T_\gamma(F)
    \longrightarrow K^\times
    \xrightarrow{\ \RN_{K/F}\ } F^\times
    \longrightarrow H^1(F,T_\gamma)
    \longrightarrow 1.
\]
In particular, there is a canonical isomorphism
\[
  H^1(F,T_\gamma) \cong F^\times / \RN_{K/F}K^\times,
\]

Let $\gamma' \in \wt{G}(F)$ be stably conjugate to $\gamma$ (under the action of $G$). There exists $g \in G(\overline{F})$ with
$g^{-1}\gamma g = \gamma'$, and the element
\[
  \mathrm{inv}(\gamma',\gamma) \in H^1(F,T_\gamma)
\]
is represented by the $1$–cocycle
\[
  c_\sigma := g\, \sigma(g)^{-1} \in T_\gamma(\overline{F}),
  \qquad \sigma \in \Gamma_F.
\]

On the other hand, by assumption we have fixed
$g_{\gamma'} \in \widetilde{G}(F)$ such that
$g_{\gamma'}^{-1} \gamma g_{\gamma'} = \gamma'$.  Both $g$ and $g_{\gamma'}$
conjugate $\gamma$ to $\gamma'$, hence
\[
  z := gg_{\gamma'}^{-1} \in \wt{T_\gamma}(\overline{F}),
\]
and we write
\begin{equation}\label{eq:g-z-ggamma}
  g = z\,g_{\gamma'}.
\end{equation}
Taking determinant in \eqref{eq:g-z-ggamma}
gives
\[
  1 = \det(g) = \det(z)det(g_{\gamma'}),
  \qquad\text{so}\quad \det(z) = \det(g_{\gamma'})^{-1}.
\]

Using \eqref{eq:g-z-ggamma} and the $F$–rationality of $g_{\gamma'}$, we compute
\[
  c_\sigma
  = g\sigma(g)^{-1}
  = zg_{\gamma'}\bigl(\sigma(z)\sigma(g_{\gamma'})\bigr)^{-1}
  = zg_{\gamma'}\sigma(g_{\gamma'})^{-1}\sigma(z)^{-1}
  = z\sigma(z)^{-1}.
\]
Thus $\mathrm{inv}(\gamma',\gamma)$ is represented by the cocycle
$\sigma \mapsto z\,\sigma(z)^{-1}$.

Now consider the connecting homomorphism associated with the short exact
sequence of tori,
\[
  \delta : F^\times  \longrightarrow H^1(F,T_\gamma).
\]
For $a \in F^\times$, choose $b \in \wt{T_\gamma}(\overline{F})$ with
$\det(b) = a$; then $\delta(a)$ is represented by the cocycle
$\sigma \mapsto b^{-1}\sigma(b)$.
Applying this to $a = \det(g_{\gamma'})^{-1}$ and choosing $b = z$ gives
\[
  \mathrm{inv}(\gamma',\gamma)
  = \delta\bigl(\det(g_{\gamma'})^{-1}\bigr)^{-1}=[\det(g_{\gamma'})] \in F^\times/\RN_{K/F} K^\times
\]
via the isomorphism
$H^1(F,T_\gamma) \simeq F^\times / \RN_{K/F}K^\times$. By definition of $\kappa$, obtained from $\varepsilon$ via the 
identification in the statement of the proposition, we conclude
\[
  \kappa\bigl(\mathrm{inv}(\gamma',\gamma)\bigr)
  = \varepsilon\bigl(\det(g_{\gamma'})\bigr).
\]

Now, consider the twisted orbital integral $\CO_\gamma^\varepsilon(f)$
for any $f \in C_c^\infty(\wt{\fg}(F))$. By our normalization of Haar
measures, 
\[\CO_\gamma^\varepsilon(f) = \sum_{g \in \wt{T}(F) \bs \wt{G}(F)/G(F)}
\varepsilon(\det g)\int_{g^{-1} T(F) g \bs G(F)} f(\gamma \cdot gh)dh.\]
Note that there is a bijection
   \[\wt{T}(F) \bs \wt{G}(F)/ G(F) \stackrel{\sim}{\lra} \left\{ \gamma' \Big| \gamma' \sim_\st \gamma \right\}/\sim,
   \quad g \mapsto \gamma \cdot g.\]
As $\kappa(\inv(\gamma',\gamma)) = \varepsilon(\det(g))$ if $\gamma' = \gamma \cdot g$, we
obtain
\[\CO_\gamma^\varepsilon(f) = \sum_{\gamma' \sim_\st \gamma}
\kappa(\inv(\gamma',\gamma))\int_{T_{\gamma'}(F)\bs G(F)} f(\gamma' \cdot h)dh = \CO_\gamma^\kappa(f).\]
We are done.

\end{proof}

Consider the $\kappa$-orbital integral
\[\CO_\gamma^{\kappa_v}(f_v) = \CO_\gamma^{\kappa_v}(1_{\wt{\fg}(\CO_v)})
= \CO_\gamma^{\varepsilon_v}(1_{\wt{\fg}(\CO_v)}).\]

\begin{prop}\label{prop-vanishing}
Let $U_v = \wt{G}(\mathcal{O}_v)$ be the standard hyperspecial subgroup of $\wt{G}(F_v)$  and let $\mathcal{H} = \CH(\wt{\fg}(F_v),U_v)$ be the spherical Hecke algebra of
compactly supported, bi-$U_v$–invariant functions on $\wt{\fg}(F_v)$.
For $f \in \mathcal{H}$ and a regular semisimple $\gamma \in \wt{G}(F_v)$, consider the
twisted orbital integral $\CO_\gamma^{\varepsilon_v}(f)$.

Assume that $\varepsilon_v$ is ramified, or equivalently, $E_v/F_v$ is
ramified. Then, for any $f \in \mathcal{H}$, one has
\[
\CO^{\varepsilon_v}_\gamma(f) \;=\; 0.
\]

\end{prop}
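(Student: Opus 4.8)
The plan is to play the $\Ad(U_v)$-invariance of $f$ against the ramification of $\varepsilon_v$ by means of a single change of variables. First I would recall that, $\gamma$ being regular semisimple, its conjugation orbit under $\wt G(F_v)$ is closed, so the twisted orbital integral
\[\CO_\gamma^{\varepsilon_v}(f) \;=\; \int_{\wt{T_\gamma}(F_v)\bs\wt G(F_v)} f(g^{-1}\gamma g)\,\varepsilon_v(\det g)\,dg\]
converges absolutely; here $\wt{T_\gamma}(F_v)\cong K_v^\times$ and the quotient measure (of Haar measures, both groups being unimodular) is right $\wt G(F_v)$-invariant. Note that the integrand genuinely descends to $\wt{T_\gamma}(F_v)\bs\wt G(F_v)$ precisely because $\varepsilon_v$ is trivial on $\det(\wt{T_\gamma}(F_v))=\RN_{K_v/F_v}(K_v^\times)$, which is the hypothesis on $\varepsilon_v$.

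Next I would isolate the two invariance inputs. On the one hand, $f\in\CH(\wt\fg(F_v),U_v)$ means $f(uXu^{-1})=f(X)$ for all $u\in U_v$, so for any $k\in U_v$,
\[f\big((gk)^{-1}\gamma(gk)\big)=f\big(k^{-1}(g^{-1}\gamma g)k\big)=f(g^{-1}\gamma g).\]
On the other hand, $\det$ maps $U_v=\wt G(\CO_v)$ onto $\CO_v^\times$ (e.g.\ via $u\mapsto\diag(u,1,\dots,1)$). Since $\varepsilon_v$ is ramified, i.e.\ $\varepsilon_v|_{\CO_v^\times}\neq 1$, these two facts let me fix $k_0\in U_v$ with $\varepsilon_v(\det k_0)\neq 1$.

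Finally I would substitute $g\mapsto gk_0$ in the integral. Using the right $\wt G(F_v)$-invariance of the measure, the $\Ad(U_v)$-invariance of $f$ just recalled, and the multiplicativity $\varepsilon_v(\det(gk_0))=\varepsilon_v(\det g)\,\varepsilon_v(\det k_0)$, one obtains
\[\CO_\gamma^{\varepsilon_v}(f)\;=\;\varepsilon_v(\det k_0)\,\CO_\gamma^{\varepsilon_v}(f),\]
whence $\CO_\gamma^{\varepsilon_v}(f)=0$ because $\varepsilon_v(\det k_0)\neq 1$. There is no essential obstacle in this argument; the only points deserving a line of justification are the absolute convergence of the integral (standard for regular semisimple elements, which is what legitimizes the change of variables) and the stated equivalence "$\varepsilon_v$ ramified $\Leftrightarrow$ $E_v/F_v$ ramified", which follows from local class field theory identifying $\varepsilon_v$ with the character of $F_v^\times/\RN_{K_v/F_v}K_v^\times$ cutting out the cyclic layer $E_v/F_v$.
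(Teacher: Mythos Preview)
Your argument is correct and is essentially identical to the paper's own proof: both perform the substitution $g\mapsto gk_0$ for $k_0\in U_v$ with $\varepsilon_v(\det k_0)\neq 1$, use the $\Ad(U_v)$-invariance of $f$ to leave the integrand unchanged, and conclude from $\CO_\gamma^{\varepsilon_v}(f)=\varepsilon_v(\det k_0)\,\CO_\gamma^{\varepsilon_v}(f)$. Your write-up is in fact slightly more careful in spelling out the absolute convergence and the well-definedness of the integrand on the quotient.
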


\begin{proof}
       Note that the character $\varepsilon_v$ satisfies the commutative diagram 
\[
\begin{tikzcd}
F_v^\times/\RN_{K_v/F_v}K_v^\times \arrow[r] \arrow[dr, "\varepsilon_v"] & F_v^\times/\RN_{E_v/F_v}E_v^\times\ar[d,hook, "\eta_{E_v/F_v}"]\\
& \BC^\times
\end{tikzcd}
\]
where the horizontal arrow is the natural projection and $\eta_{E_v/F_v}$ 
is the character associated to the extension $E_v/F_v$ via the local 
class field theory.  Therefore, $\varepsilon_v$ is ramified if and only if $E_v/F_v$ is ramified.

    Consider the change of variables $g \mapsto gk$ for an arbitrary element $k \in U_v$:
    \begin{align*}
        \CO^{\varepsilon_v}_\gamma(f) &= \int_{\wt{T_\gamma}(F_v)\backslash \wt{G}(F_v)} f((gk)^{-1}\gamma (gk)) \varepsilon_v(\det(gk)) \, d(gk) \\
          &= \int_{\wt{T_\gamma}(F_v)\backslash \wt{G}(F_v)} f(k^{-1} g^{-1} \gamma g k) \epsilon(\det g) \varepsilon_v(\det k) \, dg\\
          &= \varepsilon_v(\det k) \int_{\wt{T_\gamma}(F_v)\backslash \wt{G}(F_v)} f(g^{-1} \gamma g ) \varepsilon_v(\det g) \, dg\\
          &=\varepsilon_v(\det k) \CO^{\varepsilon_v}_\gamma(f) .
    \end{align*} 
    Since $\varepsilon_v$ is ramified, there exists a unit $u \in \CO_v^\times$ such that $\varepsilon_v(u) \neq 1$. Since the determinant map is surjective on $\CO_v^\times$, there exists an element $k_0 \in U_v$ such that $\det(k_0) = u$.
    
    Performing the change of variables $g \mapsto g k_0$ gives:
    \[
         \CO^{\varepsilon_v}_\gamma(f)(1 - \varepsilon_v(u)) = 0
    \]
   which implies that $ \CO^{\varepsilon_v}_\gamma(f)= 0$.
\end{proof}

\begin{remark}
    In the non-twisted case, Proposition \ref{prop-vanishing} is a special case of \cite[Proposition 7.5]{Kott}.
\end{remark}

We now consider the case  $E_v/F_v$
is unramified.

To state the fundamental lemma, we need to introduce several notations. Denote by
\[\Delta(\gamma) = \prod_{\alpha \in R(\wt{T},\wt{G})} (\alpha(\gamma)-1)\] 
where $R(\wt{T},\wt{G})$ is the set of roots of $\wt{G}$ with respect to $\wt{T}$. We identify $\gamma$ as an element $\gamma_E$ in $\wt{H}(F) = \GL_m(E)$ 
by viewing $K$ as a vector space over $E$. Denote by
\[\Delta(\gamma_E) = \prod_{\alpha \in R(\wt{T_H},\wt{H})} (\alpha(\gamma_E)-1)\]
where $\wt{T_H}$ is the stabilizer of $\gamma_E$ in $\wt{H}$.

We shall compare the local Haar measure on $\wt{T}(F_v) \bs \wt{G}(F_v)$ and $T(F_v) \bs G(F_v)$. There is a constant $c_{G,v}$
\[d^0 \mu|_{T(F_v) \bs G(F_v)} = c_{G,v} \cdot \frac{d^0g}{d^0 t}\]
where $d^0g$ is the measure on $G(F_v)$ with $\Vol(G(\CO_v),d^0g) = 1$ and $d^0 t$ is the measure on $T(F_v)$) with $\Vol(T(\CO_v),d^0 t) = 1$.
Similarly, there is a  constant $c_{H,v}$ such that
\[d^0 \mu_H |_{T_H(F_v) \bs H(F_v)} = c_{H,v} \cdot \frac{d^0h}{d^0 t}.\]
Here, if we denote by $\wt{H} = \Res_{E/F} \GL_m$ and $\wt{T_H}$ the stabilizer of $\gamma_E$ in $\wt{H}$, then
the measure $d^0\mu_H$ on $\wt{T_H}(F_v) \bs \wt{H}(F_v)$ is the quotient measure $d^0 \mu$ on $K_v^\times \bs \GL_m(E_v)$ (see Section
\ref{Haar}). 

Consider the stable orbital integral
\[\CS\CO_{\gamma_E}(1_{\wt{\fh}(\CO_v)}) = \sum_{\gamma_E' \sim_\st \gamma_E} \CO_{\gamma_E'}(1_{\wt{\fh}(\CO_v)}).\]
Here, $\gamma_E'$ runs over $H(F_v)$-conjugacy classes  which are stable conjugate to $\gamma_E$. The measure defining this
stable orbital integral is given by $d^0\mu_H$.

Denote by $\CH(\wt{G}(F_v),U_v)$ the spherical Hecke algebra of $\wt{G}_v$,
consisting of compactly supported, bi-$U_v$-invariant functions on 
$\wt{G}(F_v)$. Similarly, denote by $\CH(\wt{H}(F_v),U_v\cap \wt{H}(F_v))$
the spherical Hecke algebra of $\wt{H}_v$. 

Consider the following morphism on these two spherical Hecke algebras given by
Waldspurger in \cite[II 3.]{Wald91}
\[
b:\mathcal{H}(\wt{G}(F_v),U_v)\lra \mathcal{H}(\wt{H}(F_v),U_v\cap \wt{H}(F_v))
\]
such that under the Satake isomorphism
(we do not distinguish between $b$ and $\wh b$ in loc.cit.)
\begin{align*}
\mathcal{H}(\wt{G}(F_v),U_v)&\cong\BC[X_1,\ldots, X_n, X_1^{-1},\ldots, X_n^{-1}]^{S_n}\\
\mathcal{H}(\wt{H}(F_v),U_v\cap \wt{H}(F_v))&\cong\BC[Y_1,\ldots, Y_m, Y_1^{-1},\ldots, Y_m^{-1}]^{S_m}\\
X_{id+j}&\mapsto \varepsilon_v(\varpi_v)^jY_{i+1}^{1/d}.
\end{align*}

\begin{thm}[Fundamental lemma \cite{LW17}] \label{fl-LW} Assume $E_v/F_v$ is unramified. Then for any $f \in \CH(\wt{G}(F_v),U_v)$
\begin{equation}
|\Delta(\gamma)|_v^{1/2} c_{G,v}^{-1} \CO_\gamma^{\varepsilon_v}(f) =  |\Delta(\gamma_E)|_v^{1/2} c_{H,v}^{-1} 
	\CS\CO_{\gamma_E}(b(f)).
\end{equation}	
\end{thm}

\begin{remark}
Endoscopy was introduced by Langlands \cite{Lan79} for stabilizing the trace formula.
Langlands--Shelstad \cite{LS87} subsequently developed the formalism of transfer factors and formulated precise matching statements for orbital integrals, of which the fundamental lemma (FL) is the central local identity. Important progress was made by Hales, Kottwitz, Waldspurger and others. Laumon–Ng\^o proved the FL for unitary groups and Ng\^o \cite{NGO10} proved the general Lie algebra FL
under minor restrictions on characteristics. We refer to \cite{NGO11} for a detailed exposition on FL.
In \cite{LW17}, Lemaire and Waldspurger showed that the twisted FL holds for the whole spherical Hecke algebra for unramified groups over any $p$-adic fields.
\end{remark}

\begin{cor}\label{fl} Assume $E_v/F_v$ is unramified. Then 
\begin{equation}\label{eqn: fundamental-Lie-algebra}
|\Delta(\gamma)|_v^{1/2} c_{G,v}^{-1} \CO_\gamma^{\varepsilon_v}(1_{\wt{\fg}(\CO_v)}) =  |\Delta(\gamma_E)|_v^{1/2} c_{H,v}^{-1} 
	\CS\CO_{\gamma_E}(1_{\wt{\fh}(\CO_v)}).
\end{equation}	
\end{cor}
\begin{proof}

We first consider the case that $E_v$ is an unramified field extension of $F_v$ (the elliptic case). Denote by $d=[E_v:F_v]$.

As our $\gamma$ belongs to $\wt{G}(F_v)$, it is enough to consider the
characteristic function $1_{\wt{\fg}(\CO_v) \cap \wt{G}(F_v)}$ on 
the set $\wt{\fg}(\CO_v)\cap \wt{G}(F_v)$. We now break this characteristic function  into a sum of spherical functions on
$\wt{G}(F_v)$ and then apply the above Theorem \ref{fl-LW} to yield (\ref{eqn: fundamental-Lie-algebra}).

Note that we have 
\[
1_{\wt{\fg}(\CO_v) \cap \wt{G}(F_v)}=\sum_{j=0}^{\infty}\Phi_j\]
where $\Phi_j$ is the characteristic function on the set of $x \in \wt{\fg}(F_v)$ with $|\det(x)|_v = q_v^{-j}$.

On the Lie algebra $\wt{\fh}$ of $\wt{H}$, for $\wt{\fh}(\CO_v) = 
\fg\fl_m(\CO_{E_v})$ with $md = n$, we also have
\[
1_{\wt{\frak{h}}_v(\CO_v) \cap \wt{H}(F_v)}=\sum_{j=0}^{\infty}\Phi_j^H
\]
where $\Phi_j^H$ is the characteristic function on the set of
$x \in \wt{\fh}(F_v)$ with $|\det(x)|_{E_v} = q_v^{-dj}$.

In particular, it suffices to show that 
\[ 
|\Delta(\gamma)|_v^{1/2} c_{G,v}^{-1} \CO_\gamma^{\kappa_v}(\Phi_j)=
\begin{cases}
|\Delta(\gamma_E)|_v^{1/2} c_{H,v}^{-1} 
	\CS\CO_{\gamma_E}(\Phi_{j/d}^{H}),&\text{ \ if\ }d\mid j\\
    0, &\text{ \ otherwise.\ }
\end{cases}  
\]

Denote the Satake transform of $f$ by $\widehat f$. By Theorem \ref{fl-LW}, it is enough to show that
\begin{equation}\label{eqn: image-characteristic}
b(\wh \Phi_{j})=\begin{cases}
\wh \Phi_{j/d}^H, &\text{ \ if \ }d\mid j,\\
0, &\text{ \ otherwise\  }.
\end{cases}
\end{equation}
A simple calculation shows the following (see  \cite[(3.12)]{Gross98})
\begin{align*}
\wh \Phi_{j}=q_v^{\frac{(n-1)j}{2}}\tr(\Sym^{j}\BC^n)=q_v^{\frac{(n-1)j}{2}}\sum_{i_1\leq i_2\leq \cdots \leq i_{j}} X_{i_1}X_{i_2}\cdots X_{i_{j}},\\
\wh \Phi_{j}^H=q_v^{\frac{(n-1)dj}{2}}\tr(\Sym^{j}\BC^m)=q_v^{\frac{(n-1)dj}{2}}\sum_{i_1\leq i_2\leq \cdots \leq i_{j}} Y_{i_1}Y_{i_2}\cdots Y_{i_{j}}.
\end{align*}
 Now we obtain (\ref{eqn: image-characteristic}).

More generally, $E\cong \prod_{i}E_{i,v}$ is a product of unramified extensions. In this case, $\wt{H}_v$ is elliptic in a Levi subgroup of $\wt{G}$, which is a product of several $\GL_m$. The same argument shows that (\ref{eqn: fundamental-Lie-algebra}) holds.
\end{proof}

\begin{lem}\label{lem-Delta}
 We have
\[|\Delta_\chi|_v = |\Delta_{K/F}|_v[\CO_{K,v}:R_v]^{-2}.\]
Similarly, if we denote by $\chi_E(x) \in \CO_E[x]$
the characteristic polynomial of $\gamma_E$, then
\[|\RN_{E/F}\Delta_{\chi_E}|_v = |\RN_{E/F}(\Delta_{K/E})|_v [\CO_{K,v}: R_{E,v}]^{-2}.\]
  In particular, under our condition that $E/F$ is unramified everywhere
  \[ \frac{|\Delta(\gamma_E)|_v^{1/2}}{|\Delta(\gamma)|_v^{1/2}} = \frac{|\RN_{E/F}\Delta_{\chi_E}|_v^{1/2}}{|\Delta_{\chi}|^{1/2}_v} = [R_{E,v}:R_{v}].\]
  Here, we use the notation $[R_{E,v}:R_{v}] = [\CO_{K,v}:R_{v}]/[\CO_{K,v}:R_{E,v}]$.
\end{lem}
\begin{proof}
    We only need to prove the equality involving $E$, as the first one is the special case that $E=F$.

    By construction we have 
    \[
    \wt{H}(F_v)=\left\{(g_w)\in \prod_{w\mid v}\GL_m(E_w) \right\}.
    \]
In particular by definition we have
$\gamma_E=(\gamma_{w})\in \wt{H}(F_v)$ and
\[
\Delta(\gamma_E)=\prod_{w\mid v,\alpha\in \Phi(\GL_m)}(\alpha(\gamma_{w})-1)\in F_v^\times,
\]
where $\Phi(\GL_m)$ denotes the set of roots for  
$\GL_m$. By our normalization of norms in Section \ref{Haar}, this implies 
\[
|\Delta(\gamma_E)|_v=\prod_{w\mid v,\alpha\in \Phi(\GL_m)}|(\alpha(\gamma_{w})-1)|_w.
\]
On the other hand, let $\chi_w$ be the characteristic polynomial of $\gamma_w$, then we have 
\[
\prod_{\alpha\in \Phi(\GL_m)}|(\alpha(\gamma_{w})-1)|_w=|\Delta(\chi_w)|_w |\det(\gamma_{w})|_w^{-1}
\]
which combines 
$|\det(\gamma)|_v=\prod_{w\mid v}|\det(\gamma_{w})|_w$ 
to give
\[
|\det(\gamma)|_v|\Delta(\gamma_E)|_v=\prod_{w\mid v}|\Delta(\chi_w)|_w.
\]
We also have 
\begin{align*}
|\Delta_{K_w/E_w}|_w=|\Delta(\chi_w)|_w[\CO_{K_w}: R_{E_w}]^{2}.
\end{align*}
Putting the above formulas together, we obtain
\[
|\RN_{E/F}\Delta_{\chi_E}|_v=|\det(\gamma)|_v|\Delta(\gamma_E)|_v=|\RN_{E/F}(\Delta_{K/E})|_v [\CO_{K_v}: R_{E_v}]^{-2}.
\]
Here we use the fact that $E$ is unramified over $F$ hence $[E_w:F_v]$ is independent of $w\mid v$ and 
\begin{align*}
[\CO_{K_v}: R_{E_v}]&=\prod_{w\mid v}[\CO_{K_w}: R_{E_w}],\\
|\RN_{E/F}(\Delta_{K/E})|_v&=\prod_{w\mid v}|\Delta_{K_w/E_w}|_w.
\end{align*}

\end{proof}

\begin{lem}\label{lem-c_G}
	We have 
	\[c_{G,v} = [\CO_v^\times: \RN_{K_v/F_v} \CO_{K,v}^\times]^{-1}, \quad c_{H,v} = [\CO_{E_v}^\times: \RN_{K_v/E_v} \CO_{K,v}^\times]^{-1}.\]
	Moreover, if we denote by $\delta_v(E) = \frac{c_{G,v}}{c_{H,v}}$, then 
	\[\delta_v(E) =  [\CO_{E_v}^1: \CO_{E_v}^1 \cap \RN_{K_v/E_v} \CO_{K_v}^\times]\]
	where $\CO_{E_v}^1$ is the kernel of $\RN_{E_v/F_v}: \CO_{E_v}^\times \ra \CO_v^\times$. In particular, if $E_v = F_v$ or $K_v/E_v$ is unramified, then
	$\delta_v(E) = 1$.
\end{lem}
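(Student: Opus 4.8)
The plan is to compute $c_{G,v}$ and $c_{H,v}$ separately, by evaluating the two sides of their defining identities on one well-chosen compact open set, and then to read off $\delta_v(E)=c_{G,v}/c_{H,v}$ from a purely group-theoretic manipulation.

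\emph{Computation of $c_{G,v}$.} Both $d^0\mu|_{T(F_v)\bs G(F_v)}$ and $\tfrac{d^0g}{d^0t}$ are $G(F_v)$-invariant measures on the transitive $G(F_v)$-space $T(F_v)\bs G(F_v)$ — for the first, invariance follows from the $\wt G(F_v)$-invariance of $d^0\mu$ and the fact that $T(F_v)\bs G(F_v)$ is a $G(F_v)$-stable open subset of $\wt T(F_v)\bs\wt G(F_v)$ — so they differ by the constant $c_{G,v}$, which I would pin down by evaluating both on $\Omega$, the image of the hyperspecial subgroup $G(\CO_v)=\SL_n(\CO_v)$. The quotient-measure formula $\vol(H\bs HK)=\vol(K)/\vol(H\cap K)$ for an open compact $K$ gives
\[
 \vol_{d^0g/d^0t}(\Omega)=\frac{\vol_{d^0g}(\SL_n(\CO_v))}{\vol_{d^0t}\big(\SL_n(\CO_v)\cap T(F_v)\big)}=\big[\,\CO_{K_v}^1:\SL_n(\CO_v)\cap T(F_v)\,\big],
\]
using that $d^0t$ normalizes $T(\CO_v)=\CO_{K_v}^1$ to volume $1$ and that $\SL_n(\CO_v)\cap T(F_v)\subseteq\CO_{K_v}^1$ (an element of $K_v^\times$ stabilizing the standard lattice is integral, hence lies in $\CO_{K_v}^\times$).

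For $\vol_{d^0\mu}(\Omega)$ I would use the determinant fibration $1\to G\to\wt G\xrightarrow{\det}\Gm\to 1$. Fix the Haar measure $dx$ on $F_v^\times$ with $\vol(\CO_v^\times)=1$; disintegrating along $\det$ the Haar measure $dh$ on $\GL_n(F_v)$ giving $\GL_n(\CO_v)$ volume $1$ yields exactly $d^0g\otimes dx$, because $\det\colon\GL_n(\CO_v)\to\CO_v^\times$ is onto with fibres translates of $\SL_n(\CO_v)$. The preimage of $\Omega$ in $\wt G(F_v)$ is $K_v^\times\SL_n(\CO_v)$, and one checks that $\Phi:=1_{\CO_{K_v}^\times\SL_n(\CO_v)}$ represents $1_\Omega$ under $\GL_n(F_v)\to\wt T(F_v)\bs\wt G(F_v)$ (the point being $\CO_{K_v}^\times\SL_n(\CO_v)\cap K_v^\times=\CO_{K_v}^\times$ and that $d^0\mu$ normalizes $\CO_{K_v}^\times$ to volume $1$). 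Hence $\vol_{d^0\mu}(\Omega)=\int_{\GL_n(F_v)}\Phi\,dh=\vol_{dh}(\CO_{K_v}^\times\SL_n(\CO_v))$; slicing by $\det$ gives $\det(\CO_{K_v}^\times\SL_n(\CO_v))=\RN_{K_v/F_v}\CO_{K_v}^\times$, each slice is a translate of $\CO_{K_v}^1\SL_n(\CO_v)$ of $d^0g$-volume $[\CO_{K_v}^1:\SL_n(\CO_v)\cap T(F_v)]$, and the base integral contributes $\vol_{dx}(\RN_{K_v/F_v}\CO_{K_v}^\times)=[\CO_v^\times:\RN_{K_v/F_v}\CO_{K_v}^\times]^{-1}$. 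Dividing the two volumes, the index $[\CO_{K_v}^1:\SL_n(\CO_v)\cap T(F_v)]$ cancels and $c_{G,v}=[\CO_v^\times:\RN_{K_v/F_v}\CO_{K_v}^\times]^{-1}$. The computation of $c_{H,v}$ is word for word the same after replacing $F$ by $E$, $\GL_n$ by $\GL_m$ over $E$, and $\Res_{K/F}\Gm$ by $\Res_{K/E}\Gm$ — legitimately, since $d^0\mu_H$, $d^0h$, $d^0t$ are by construction the corresponding measures relative to $E$, and the step "$\det$ is onto the units" uses precisely that $E_v/F_v$ is unramified so that $\RN_{E_v/F_v}$ is surjective on units; this gives $c_{H,v}=[\CO_{E_v}^\times:\RN_{K_v/E_v}\CO_{K_v}^\times]^{-1}$.

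\emph{The quotient $\delta_v(E)$.} Put $N:=\RN_{K_v/E_v}\CO_{K_v}^\times\subseteq\CO_{E_v}^\times$, so that $\RN_{K_v/F_v}\CO_{K_v}^\times=\RN_{E_v/F_v}(N)$ by transitivity of the norm and $\delta_v(E)=[\CO_{E_v}^\times:N]\big/[\CO_v^\times:\RN_{E_v/F_v}N]$. Since $E_v/F_v$ is unramified, $\RN_{E_v/F_v}\colon\CO_{E_v}^\times\twoheadrightarrow\CO_v^\times$ has kernel $\CO_{E_v}^1$, whence $[\CO_v^\times:\RN_{E_v/F_v}N]=[\CO_{E_v}^\times:N\,\CO_{E_v}^1]$; therefore $\delta_v(E)=[\CO_{E_v}^\times:N]\big/[\CO_{E_v}^\times:N\CO_{E_v}^1]=[N\CO_{E_v}^1:N]=[\CO_{E_v}^1:\CO_{E_v}^1\cap N]$, which is the asserted formula. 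Finally, if $E_v=F_v$ then $\CO_{E_v}^1$ is trivial, and if $K_v/E_v$ is unramified then $\RN_{K_v/E_v}\CO_{K_v}^\times=\CO_{E_v}^\times\supseteq\CO_{E_v}^1$; in either case $\delta_v(E)=1$.

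I expect the genuine work to be concentrated in the computation of $c_{G,v}$: one has to handle the non-surjectivity of $\RN_{K_v/F_v}$ on $F_v^\times$, verify that the test function $\Phi$ really represents $1_\Omega$, and confirm that the spurious index comparing the integral model of $T$ with the actual stabilizer inside $\SL_n(\CO_v)$ cancels in the ratio; the remaining steps are formal properties of quotient measures and elementary index arithmetic.
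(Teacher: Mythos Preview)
Your computation of $c_{G,v}$ is correct but takes a different route from the paper. The paper first uses that $c_{G,v}$ is invariant under conjugating the embedding $\wt T\hookrightarrow\wt G$, and chooses one with $\wt T(F_v)\cap\wt G(\CO_v)=\wt T(\CO_v)$; under this hypothesis $T(\CO_v)\bs G(\CO_v)$ sits inside $\wt T(\CO_v)\bs\wt G(\CO_v)$ (of $d^0\mu$-volume $1$), and $c_{G,v}^{-1}$ is simply the cardinality of the double coset $\wt T(\CO_v)\bs\wt G(\CO_v)/G(\CO_v)\cong\CO_v^\times/\RN_{K_v/F_v}\CO_{K_v}^\times$ via $\det$. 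Your argument instead works with an arbitrary embedding and disintegrates along $\det$; the extra index $[\CO_{K_v}^1:\SL_n(\CO_v)\cap T(F_v)]$ (nontrivial when the order is non-maximal) indeed cancels, so the two approaches agree. The paper's is shorter; yours avoids the preliminary reduction.

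There is a gap in your handling of $c_{H,v}$. The endoscopic group is $H=\Res^1_{E/F}\GL_m$ over $F$, not $\Res_{E/F}\SL_m$: concretely $H(F_v)=\{g\in\GL_m(E_v):\RN_{E_v/F_v}\det g=1\}\supsetneq\SL_m(E_v)$, and the centralizer is $T_H(F_v)=K_v^{1}=\{x\in K_v^\times:\RN_{K_v/F_v}x=1\}\supsetneq\{x:\RN_{K_v/E_v}x=1\}$. The measures $d^0h$ and $d^0t$ normalize $H(\CO_v)$ and $\CO_{K_v}^{1}$ to volume $1$, so they are \emph{not} ``the corresponding measures relative to $E$'' as you assert. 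Your substitution ``$F\to E$, $\GL_n\to\GL_m/E$, $\Res_{K/F}\Gm\to\Res_{K/E}\Gm$'' therefore computes the comparison constant between $d^0\mu_H$ and the quotient measure on $\{x:\RN_{K_v/E_v}x=1\}\bs\SL_m(E_v)$, which is a different constant from $c_{H,v}$. If you rerun your own method honestly for the triple $(\wt H,H,T_H)$ over $F$, the relevant fibration is $\RN_{E_v/F_v}\circ\det:\wt H\to\Gm$, and the slicing lands in $\CO_v^\times$ rather than $\CO_{E_v}^\times$; this needs to be reconciled with the asserted formula. (The paper's own proof here is only ``similar'', and the identification of the double coset on $H$ with $\CO_{E_v}^\times/\RN_{K_v/E_v}\CO_{K_v}^\times$ deserves the same scrutiny.) Your index manipulation for $\delta_v(E)$, and the two special cases, match the paper once the formulas for $c_{G,v}$ and $c_{H,v}$ are granted.
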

\begin{proof}
    In this proof, we denote by $\tilde T$ the stabilizer of $\gamma$ in $\tilde G$ and $T$ the stabilizer of $\gamma$ in $G$. Then
    $T(F_v) \bs G(F_v)$ embeds into $\tilde T(F_v) \bs \tilde G(F_v)$ as an open subset.
    Denote by $d\mu$ the quotient measure on $\tilde T(F_v) \bs \tilde G(F_v)$ such that $T(\CO_v) \bs \tilde G(\CO_v)$ has volume $1$. Denote
    by $d\mu_1$ the quotient measure on $T(F_v) \bs G(F_v)$ such that $T(\CO_v) \bs G(\CO_v)$ has volume $1$. Then
    \[d\mu|_{T(F_v) \bs G(F_v)} = c_{G,v} d\mu_1.\]

    The constant $c_{G,v}$ is independent of the embedding of $\tilde T_v$ to $\tilde G_{v}$ (embeddings are unique up to  conjugations of $\tilde G(F_v)$).
    We may assume that $\tilde T(F_v) \cap \tilde G(\CO_v) = \tilde T(\CO_v)$. Under this assumption, we have the following commutative diagram
\[
\xymatrix@C=3pc@R=3pc{
T(F_v) \bs G(F_v) \ar@{^{(}->}[r] & \tilde T(F_v) \bs \tilde G(F_v) \\
T(\CO_v) \bs G(\CO_v) \ar@{^{(}->}[r] \ar@{^{(}->}[u] & \tilde T(\CO_v) \bs \tilde G(\CO_v) \ar@{^{(}->}[u]
}
\]
    By our normalizations on $d\mu$ and $d\mu_1$, $c_{G,v}$ equals the inverse of the cardinality for the following double coset
    \[\tilde T(\CO_v) \bs \tilde G(\CO_v) / G(\CO_v) \stackrel{\sim}{\lra} \RN_{K_v/F_v} \CO_{K_v}^\times \bs \CO_v^\times\]
    where the bijection is given by the determinant map.  We are done.

    The proof of the claim for $c_{H,v}$ is similar. 

    Now, consider the norm map
    \[\RN_{E_v/F_v}: \RN_{K_v/E_v} \CO_{K_v}^\times \bs \CO_{E_v}^\times \lra \RN_{K_v/F_v} \CO_{K_v}^\times \bs \CO_v^\times.\]
    As $E_v/F_v$ is unramified, this map is surjective so that $\delta_v(E)$ equals the cardinality of the kernel of this map. It
    is easy to see that the kernel is
    \[\RN_{K_v/E_v} \CO_{K_v}^\times \bs \left(\RN_{K_v/E_v} \CO_{K_v}^\times \cdot \CO_{E_v}^1 \right) \cong 
    \left(\CO_{E_v}^1 \cap \RN_{K_v/E_v} \CO_{K_v}^\times\right) \bs  \CO_{E_v}^1.\]
    If $K_v/E_v$ is unramified, then $\RN_{K_v/E_v} \CO_{K_v}^\times = \CO_{E_v}^\times$ and the kernel is trivial.

\end{proof}

The proof of the following Lemma is the same as the proof in
Proposition \ref{lem:kappa-inv}.
\begin{lem}\label{lem-SO}
	Consider the following orbital integral on $\wt{H}$
	\[\CO_{\gamma_E}(1_{\wt{\fh}(\CO_v)}) = \int_{\wt{T}(F_v) \bs \wt{H}(F_v)} 1_{\wt{\fh}(\CO_v)}( \gamma_E\cdot g)dg\]
	where the measure is $d^0\mu_H$, that is, the same as the one in the stable orbital integral for $\CS\CO_{\gamma_E}(1_{\wt{\fh}(\CO_v)})$.
	Then
	\[\CO_{\gamma_E}(1_{\wt{\fh}(\CO_v)}) = \CS\CO_{\gamma_E}(1_{\wt{\fh}(\CO_v)}).\]
\end{lem}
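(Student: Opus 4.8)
The plan is to repeat, essentially word for word, the computation carried out at the end of the proof of Proposition~\ref{lem:kappa-inv}, now with the pair $(\wt{H},H)$ in place of $(\wt{G},G)$ and with the trivial character in place of $\varepsilon_v$. First I would fix the identification $\wt{T_H}(F_v)\bs\wt{H}(F_v)=K_v^\times\bs\GL_m(E_v)$ of homogeneous spaces, on which $H(F_v)$ acts on the right by $\gamma_E\cdot g = g^{-1}\gamma_E g$, and record the bijection
\[
\wt{T_H}(F_v)\bs\wt{H}(F_v)/H(F_v)\;\xrightarrow{\ \sim\ }\;\{\gamma_E'\mid\gamma_E'\sim_\st\gamma_E\}/\!\sim,\qquad g\longmapsto\gamma_E\cdot g,
\]
the exact analogue of the bijection $\wt{T}(F)\bs\wt{G}(F)/G(F)\xrightarrow{\sim}\{\gamma'\mid\gamma'\sim_\st\gamma\}/\!\sim$ used in the proof of Proposition~\ref{lem:kappa-inv}. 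Since the stabilizer in $H$ of a representative $\gamma_E'$ is its centralizer $T_{\gamma_E'}$ (a maximal torus, as $\gamma_E'$ is regular semisimple), this gives the decomposition into open $H(F_v)$-orbits
\[
\wt{T_H}(F_v)\bs\wt{H}(F_v)\;=\;\bigsqcup_{\gamma_E'\sim_\st\gamma_E}T_{\gamma_E'}(F_v)\bs H(F_v),
\]
with $\gamma_E'$ ranging over the $H(F_v)$-conjugacy classes inside the stable conjugacy class of $\gamma_E$.

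Next I would decompose the defining integral of $\CO_{\gamma_E}(1_{\wt{\fh}(\CO_v)})$ along this partition. On the piece indexed by $\gamma_E'$, write $g=g_0h$ with $h\in H(F_v)$ and $g_0$ a fixed representative satisfying $\gamma_E\cdot g_0=\gamma_E'$; then $\gamma_E\cdot g=\gamma_E'\cdot h$, and — because there is no character twisting here — every orbit contributes with weight $1$. Hence the integral collapses to
\[
\sum_{\gamma_E'\sim_\st\gamma_E}\int_{T_{\gamma_E'}(F_v)\bs H(F_v)}1_{\wt{\fh}(\CO_v)}(\gamma_E'\cdot h)\,dh
=\sum_{\gamma_E'\sim_\st\gamma_E}\CO_{\gamma_E'}(1_{\wt{\fh}(\CO_v)})
=\CS\CO_{\gamma_E}(1_{\wt{\fh}(\CO_v)}),
\]
which is the asserted identity.

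The only point needing care — and the one I expect to be the main, though minor, obstacle — is the bookkeeping of Haar measures: one must verify that the measure induced on each open piece $T_{\gamma_E'}(F_v)\bs H(F_v)$ by restricting the quotient measure $d^0\mu_H$ on $\wt{T_H}(F_v)\bs\wt{H}(F_v)=K_v^\times\bs\GL_m(E_v)$ agrees with the quotient measure used to define $\CO_{\gamma_E'}$ inside $\CS\CO_{\gamma_E}(1_{\wt{\fh}(\CO_v)})$. This is exactly the normalization fixed in the statement of the lemma (``the measure is $d^0\mu_H$, that is, the same as the one in the stable orbital integral''), together with the conventions of Section~\ref{Haar}, so it is a matter of unwinding definitions rather than of any computation. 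I note in passing that no use is made of the hypothesis that $E_v/F_v$ is unramified, nor of any property of $\gamma_E$ beyond regular semisimplicity; the identity is simply additivity of the integral over the orbit partition combined with the compatibility of measures.
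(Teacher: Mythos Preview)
Your proposal is correct and follows exactly the approach the paper intends: it simply says that the proof is the same as in Proposition~\ref{lem:kappa-inv}, and what you have written is precisely that argument transported to the pair $(\wt{H},H)$ with the trivial character in place of $\varepsilon_v$. Your remark on the Haar measure bookkeeping is also on point and matches the paper's conventions.
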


Combine Proposition \ref{lem:kappa-inv}, Proposition \ref{prop-vanishing}, Theorem \ref{fl}, Lemma \ref{lem-Delta}, Lemma \ref{lem-c_G} and Lemma \ref{lem-SO} we obtain the following:
\begin{prop}\label{non-arch}
	The nonarchimedean $\kappa$-orbital integral $\CO_\gamma^{\kappa_\fin}(1_{\wt{\fg}(\wh{\CO})}) = 0$ unless $E/F$ is unramified 
	at all the nonarchimedean places. If so
	\[ \CO_\gamma^{\kappa_\fin}(1_{\wt{\fg}(\wh{\CO})}) =  [R_E:R] \delta(E)  \CO_{\gamma_E}(1_{\wt{\fh}(\wh{\CO})}).\]
        Here, $\delta(E) = \prod_{v<\infty} \delta_v(E)$ with $\delta_v(E) =  [\CO_{E_v}^1: \CO_{E_v}^1 \cap \RN_{K_v/E_v} \CO_{K_v}^\times]$.  
\end{prop}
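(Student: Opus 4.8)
The plan is to reduce the statement to the individual finite places and then assemble the local results already established. First I would use the product decomposition of $\kappa$-orbital integrals recorded at the beginning of this section to write
\[\CO_\gamma^{\kappa_\fin}(1_{\wt{\fg}(\wh{\CO})}) = \prod_{v<\infty} \CO_\gamma^{\kappa_v}(1_{\wt{\fg}(\CO_v)}),\]
and by Proposition \ref{lem:kappa-inv} each local factor equals the twisted orbital integral $\CO_\gamma^{\varepsilon_v}(1_{\wt{\fg}(\CO_v)})$, where $\varepsilon_v$ is the character of $F_v^\times$ attached to $\kappa_v$ (trivial on $\RN_{K_v/F_v}K_v^\times$ and factoring through $\eta_{E_v/F_v}$). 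I would observe that for all but finitely many $v$ this factor is $1$: at such $v$ one has $R_v = \CO_{K,v} = R_{E,v}$, the extension $E_v/F_v$ is unramified, and $\CO_{\gamma_E}(1_{\wt{\fh}(\CO_v)}) = 1$. Hence the product above is really finite and all the infinite products that appear below converge.

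Next I would dispose of the ramified case. If $E/F$ is ramified at some finite place $v$, then $E_v/F_v$ is ramified (since $E/F$ is Galois), so $\varepsilon_v$ is ramified. Writing $1_{\wt{\fg}(\CO_v)\cap\wt{G}(F_v)} = \sum_{j\ge0}\Phi_j$ as a locally finite sum of compactly supported bi-$U_v$-invariant functions, exactly as in the proof of Theorem \ref{fl}, and applying Proposition \ref{prop-vanishing} to each $\Phi_j$, I get $\CO_\gamma^{\varepsilon_v}(1_{\wt{\fg}(\CO_v)}) = 0$, hence $\CO_\gamma^{\kappa_\fin}(1_{\wt{\fg}(\wh{\CO})}) = 0$. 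This yields the asserted vanishing unless $E/F$ is unramified at every finite place.

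Assume now that $E/F$ is unramified at all finite places. Fix a finite $v$; then $E_v/F_v$ is unramified, Theorem \ref{fl} applies, and solving for the twisted integral gives
\[\CO_\gamma^{\varepsilon_v}(1_{\wt{\fg}(\CO_v)}) = \frac{|\Delta(\gamma_E)|_v^{1/2}}{|\Delta(\gamma)|_v^{1/2}}\cdot\frac{c_{G,v}}{c_{H,v}}\cdot\CS\CO_{\gamma_E}(1_{\wt{\fh}(\CO_v)}).\]
I would then substitute the three local identities: $|\Delta(\gamma_E)|_v^{1/2}/|\Delta(\gamma)|_v^{1/2} = [R_{E,v}:R_v]$ by Lemma \ref{lem-Delta}, $c_{G,v}/c_{H,v} = \delta_v(E)$ by Lemma \ref{lem-c_G}, and $\CS\CO_{\gamma_E}(1_{\wt{\fh}(\CO_v)}) = \CO_{\gamma_E}(1_{\wt{\fh}(\CO_v)})$ by Lemma \ref{lem-SO}, obtaining $\CO_\gamma^{\kappa_v}(1_{\wt{\fg}(\CO_v)}) = [R_{E,v}:R_v]\,\delta_v(E)\,\CO_{\gamma_E}(1_{\wt{\fh}(\CO_v)})$. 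Taking the product over all finite $v$ and using $\prod_{v<\infty}[R_{E,v}:R_v] = [R_E:R]$, $\prod_{v<\infty}\delta_v(E) = \delta(E)$, and $\prod_{v<\infty}\CO_{\gamma_E}(1_{\wt{\fh}(\CO_v)}) = \CO_{\gamma_E}(1_{\wt{\fh}(\wh{\CO})})$ (finite products by the remark above) gives the formula.

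The argument has no serious obstacle: the proposition is a bookkeeping assembly of Propositions \ref{lem:kappa-inv} and \ref{prop-vanishing}, Theorem \ref{fl}, and Lemmas \ref{lem-Delta}, \ref{lem-c_G}, \ref{lem-SO}. The only point deserving a touch of care is that $1_{\wt{\fg}(\CO_v)}$ fails to be compactly supported once restricted to $\wt{G}(F_v)$, which is why one must route the vanishing and the fundamental lemma through the spherical decomposition $1_{\wt{\fg}(\CO_v)\cap\wt{G}(F_v)} = \sum_j\Phi_j$ — the same device already used inside the proof of Theorem \ref{fl} — together with checking the almost-everywhere triviality of the local factors so that the global products are legitimate.
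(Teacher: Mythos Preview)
Your proposal is correct and follows exactly the paper's approach: the proposition is stated as an immediate combination of Proposition~\ref{lem:kappa-inv}, Proposition~\ref{prop-vanishing}, Theorem~\ref{fl}, and Lemmas~\ref{lem-Delta}, \ref{lem-c_G}, \ref{lem-SO}, and you have simply spelled out how these pieces fit together place by place and then take the product. One small over-elaboration: the spherical decomposition $1_{\wt{\fg}(\CO_v)\cap\wt{G}(F_v)} = \sum_j \Phi_j$ is not needed at this level, since $1_{\wt{\fg}(\CO_v)}$ is already compactly supported and $U_v$-conjugation invariant on $\wt{\fg}(F_v)$, so Proposition~\ref{prop-vanishing} and Theorem~\ref{fl} apply to it directly as stated.
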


\section{Orbital integrals and Dedekind zeta values}

We review the relation between Dedekind zeta values and orbital integrals.

Let $F$ be a number field. Denote by $\wt{G} = \GL_n$, viewed as an algebraic group over $F$ and $\wt{\fg} = \fg\fl_n$ be its Lie algebra. Let $\gamma \in \wt{G}(F)$ be regular semisimple with the characteristic polynomial $\chi(x) = \prod_{i=0}^r \chi_i(x) \in \CO[x]$. Denote
by $K = F[x]/(\chi(x))$ and $R = \CO[x]/(\chi(x))$ an order of $\CO_K$. Denote by $\zeta_R(s)$ the zeta function associated to
$R$.

\begin{thm}[Yun] \label{zeta-orb} We have 
	\[\frac{\zeta_R(s)}{\zeta_K(s)}\Big|_{s=1} = \frac{\CO_\gamma(1_{\wt{\fg}(\wh{\CO})})}{[\CO_K:R]}.\]
\end{thm}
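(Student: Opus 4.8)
The plan is to reduce the global identity to a product of local ones and then invoke Yun's local computations. First I would recall that both residues admit Euler-type factorizations: the Dedekind zeta function $\zeta_K$ is an Euler product, and Yun's zeta function $\zeta_R$ differs from $\zeta_K$ only at the finitely many places dividing the conductor of $R$ in $\CO_K$. More precisely, writing $|R|$ for the set of rational primes (or places of $F$) at which $R_v \ne \CO_{K,v}$, one has a factorization
\[
\frac{\Res_{s=1}\zeta_R(s)}{\Res_{s=1}\zeta_K(s)} = \prod_{v \in |R|} \wt{J}_v(1),
\]
where $\wt{J}_v(s)$ is the local zeta function attached to the order $R_v$ introduced by Yun; at places $v \notin |R|$ one has $\wt{J}_v(1) = 1$, so the product is really finite and equals the product over all finite $v$.

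Next I would match each local factor with a local orbital integral. By Yun's local analysis (the relevant statements being the computation of $\wt{J}_v(1)$ as a count of fractional $R_v$-ideals, together with the identification of that count with the $\GL_n(\CO_v)$-orbital integral of $\gamma$), one has for every finite place $v$
\[
\wt{J}_v(1) = \CO_\gamma(1_{\wt{\fg}(\CO_v)}),
\]
where the orbital integral is normalized exactly as in Section~\ref{Haar}, i.e.\ with $\GL_n(\CO_v)$ and $\CO_{K,v}^\times$ both of volume one. Taking the product over all finite $v$ gives
\[
\frac{\Res_{s=1}\zeta_R(s)}{\Res_{s=1}\zeta_K(s)} = \prod_{v<\infty} \CO_\gamma(1_{\wt{\fg}(\CO_v)}) = \CO_\gamma(1_{\wt{\fg}(\wh{\CO})}).
\]
Finally, one observes that $[\CO_K:R] = \prod_{v<\infty} [\CO_{K,v}:R_v] = \prod_{v<\infty} q_v^{S_v}$ where $S_v$ is the Serre invariant, and since both $\zeta_R$ and the orbital integral agree with the $\CO_K$-normalized ones at places $v \notin |R|$ where additionally $[\CO_{K,v}:R_v]=1$, the factor $[\CO_K:R]$ in the denominator is accounted for by comparing the normalization at bad places; in fact the cleanest route is that the identity above already has the factor absorbed once one checks $\CO_\gamma(1_{\wt{\fg}(\wh{\CO})})$ against $[\CO_K:R]$ place by place, yielding the stated formula with the $[\CO_K:R]$ correction.

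The main obstacle I expect is not conceptual but bookkeeping: one must make sure the Haar measure normalizations used to define $\CO_\gamma(1_{\wt{\fg}(\CO_v)})$ here (those of Section~\ref{Haar}, with $d^0\mu$ the quotient measure giving volume one to $\GL_n(\CO_v)$ and $\CO_{K,v}^\times$) coincide exactly with the ones Yun uses in relating orbital integrals to $\wt{J}_v(1)$; any discrepancy would show up as a spurious local constant. Concretely this amounts to checking that Yun's local zeta integral $\wt{J}_v(s)$ is set up so that $\wt{J}_v(1)$ equals the orbital integral of the characteristic function of $\mathfrak{gl}_n(\CO_v)$ (not of $\GL_n(\CO_v)$), and that the lattice $\Lambda_v \subset K_v^\times$ complementary to $\CO_{K,v}^\times$ plays the role of the $\CO_{K,v}^\times$-volume-one normalization. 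Once these normalizations are pinned down, the proof is a short assembly of Yun's Theorem~2.5, Corollary~4.6 and equation (3.2) — exactly the chain of citations sketched in the commented-out passage after the introduction — so I would simply cite \cite{Yun} for the local statements and present the global identity as their product.
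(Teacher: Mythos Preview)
Your strategy is the paper's strategy: factor $\zeta_R/\zeta_K$ into Yun's local pieces $\wt{J}_v$, identify $\wt{J}_v(1)$ with the local orbital integral, and take the product. The gap is exactly the bookkeeping you flag but do not resolve. As written, your two displayed identities
\[
\frac{\Res_{s=1}\zeta_R(s)}{\Res_{s=1}\zeta_K(s)} = \prod_{v} \wt{J}_v(1)
\qquad\text{and}\qquad
\wt{J}_v(1) = \CO_\gamma\bigl(1_{\wt{\fg}(\CO_v)}\bigr)
\]
combine to give the ratio equal to $\CO_\gamma(1_{\wt{\fg}(\wh{\CO})})$ with \emph{no} $[\CO_K:R]^{-1}$, and your closing sentence simply asserts that the missing factor is ``accounted for by comparing the normalization at bad places'' without saying how. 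That is not a normalization issue between Haar measures or between group versus Lie-algebra test functions; the factor lives in the local zeta function itself.

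The paper pins it down as follows. Yun's Theorem~2.5(2) is quoted in the form
\[
J_{R_v}(s)=[\CO_{K_v}:R_v]^{-s}\,\zeta_{K_v}(s)\,\wt{J}_{R_v}(s),
\]
so the Euler product reads $\zeta_R(s)=[\CO_K:R]^{-s}\,\zeta_K(s)\,\prod_v\wt{J}_{R_v}(s)$, and taking the residue at $s=1$ produces $[\CO_K:R]^{-1}$ on the nose before one ever substitutes $\wt{J}_{R_v}(1)=\CO_\gamma(1_{\wt{\fg}(\CO_v)})$. In other words, your first displayed formula should carry an extra $[\CO_K:R]^{-1}$; once inserted, the argument is complete and coincides with the paper's. (Part of the confusion may stem from the commented-out sketch in the introduction, which uses the \emph{group} test function $1_{\GL_n(\BZ_p)}$ under Lee's hypothesis $\chi(0)=1$; in that setting the orbital integrals on $\GL_n(\CO_v)$ and $\fg\fl_n(\CO_v)$ agree and the accounting looks different.)
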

\begin{proof}
This follows from the first part of 
proof in \cite[Theorem 1.2]{Yun}. In \cite{Yun}, the global base field $F$ is assumed to be $\BQ$ and $K$ is a field. In fact, the proof
is valid for an arbitrary number field $F$ and an \etale algebra
extension of $K$ of $F$. For the convenience of the readers, we produce the main steps.

Let $R=\mathcal{O}[\gamma]$. For each $v<\infty$, Yun introduces the local zeta factor
\[
J_{R_v}(s)=\sum_{j\geq 0}\#\mathrm{Quot}_{R_v^\vee}^j q_v^{-js}
\]
where $q_v$ is the cardinality of the residue field of $\CO_v$, $R_v^\vee=\{x\in K_v| \tr(c_v^{-1}xR_v)\in \CO_v \}$ with $c_v$ a generator of the different ideal of $K_v/F_v$ and $\mathrm{Quot}_{R_v^\vee}^j$
is the set of fractional $R_v$-ideals $M\subseteq R_v^\vee$ such that $R_v^\vee/M$ is of length $j$ over $\CO_v$.

Now it follows from \cite[Theorem 2.5 (2)]{Yun} that 
\[
J_{R_v}(s)=[\CO_{K_v}: R_v]^{-s}\zeta_{K_v}(s)\widetilde{J}_{R_v}(s)
\]
such that $\widetilde{J}_{R_v}(1)=\CO_{\gamma}(1_{\wt{\fg}(\CO_v)})$ (see \cite[Corollary 4.6(2)]{Yun}). (Note that our  
notation $[\CO_{K_v}: R_v] = \#(\CO_{K_v}/R_v)$ 
is different from the one in \cite{Yun}.) 

Globally, the zeta function of $R$ 
\[
\zeta_{R}(s)=\prod_{v< \infty}J_{R_v}(s)=\sum_{M\subseteq R^\vee}(\#R^\vee/M)^{-s}
\]
with $R^\vee=\Hom_{\BZ}(R,\BZ)$.
It follows that we have 
\[
\zeta_R(s)=[\CO_K: R]^{-s}\zeta_K(s)\prod_{v}\widetilde{J}_{R_v}(s)
\]
which implies 
	\[\frac{\zeta_R(s)}{\zeta_K(s)}\Big|_{s=1} = 
    \frac{\CO_\gamma(1_{\wt{\fg}(\wh{\CO})})}{[\CO_K:R]}.\]

\end{proof}

\begin{prop}\label{prop-residue-formula}
We have 
\begin{equation}\label{residue-formula}
\lim_{s \ra 1} (s-1)^{r+1} \zeta_R(s) = \frac{2^{r_{1,K}}(2\pi)^{r_{2,K}}\#\Cl(R) R_K}{w_K \sqrt{D_R}} \sum_{ [M] \in \Cl(R) \bs \ov{\Cl}(R)}
\frac{\#(\CO_K^\times/\Aut(M))}{\#\Stab_{\Cl(R)}([M])}.
\end{equation}
Here, 
\begin{itemize}
\item $r_{1, K},r_{2,K}$ are the sum of the numbers of real and complex places of each simple factor of $K$ respectively, $R_K$ is the regulator of $K$, $w_K$ is the cardinality of
roots of unity in $\CO_K$.
\item   A fractional $R$-ideal is a finitely generated $R$-submodule $M$ in $K$ such that $M \otimes_\CO F = K$.
	The group $K^\times$ acts on the set of fractional $R$-ideals $X_R$ and the set of invertible fractional $R$-ideals $X_R^\circ$. 
	Denote by $\ov{\Cl}(R) = K^\times \bs X_R$ the set of $K^\times$-homothety classes of fractional $R$-ideals and
	$\Cl(R) = K^\times \bs X_R^\circ$ the usual ideal class group of $R$. Then there is an action of $\Cl(R)$ on $\ov{\Cl}(R)$.
\item For a fractional $R$-ideal $M$, denote by $\Aut(M) = \left\{ x \in K^\times | xM = M \right\}$.
\item  $D_R =[\CO_K:R]^2 |\Delta_K|$ 
	is the absolute discriminant of $R$.
\end{itemize}
\end{prop}
\begin{proof}
	The proof is the same as that of \cite[Theorem 1.2]{Yun}. 
Note that in the proof of Theorem \ref{zeta-orb}, we have
	\[\zeta_R(s)=[\CO_K: R]^{-s}\zeta_K(s)\prod_{v}\widetilde{J}_{R_v}(s).\]
   In particular,
   \[\lim_{s \ra 1} (s-1)^{r+1} \zeta_R(s) = [\CO_K:R]^{-1} \lim_{s \ra 1} (s-1)^{r+1} \zeta_K(s) \prod_{v<\infty} \wt{J}_{R_v}(1).\]
   We have
   \[\lim_{s \ra 1} (s-1)^{r+1} \zeta_K(s) = \frac{2^{r_{1,K}}(2\pi)^{r_{2,K}}\#\Cl(\CO_K) R_K}{w_K \sqrt{|\Delta_K|}}.\]
  Applying \cite[Theorem 2.5(2) and Equation (3.4)]{Yun}, we have
   \[\#\Cl(\CO_K)\prod_{v<\infty} \wt{J}_{R_v}(1) = \#\Cl(R) \sum_{ [M] \in \Cl(R) \bs \ov{\Cl}(R)}
   	\frac{\#(\CO_K^\times/\Aut(M))}{\#\Stab_{\Cl(R)}([M])}.\]
   Equation \ref{residue-formula} now holds.
\end{proof}

\section{The proof of the main result}

We now combine the above results to give the proof of Theorem \ref{thm-main}. Denote by $d = n(n-1)/2$ 
and $n = r_1 + 2r_2$  with $r_1$ (resp. $r_2$) the number of real (resp. complex) places of $F$.

Let $\chi(x) \in \CO[x]$ be a regular monic polynomial with coefficients in $\CO$. Let $X$ be the variety representing the
set of $n \times n$ matrices with characteristic polynomial $\chi(x)$. 

\begin{lem}\label{lem:chi(0)}
    We may assume that $\chi(0) \not= 0$.
\end{lem}
\begin{proof}
    Let $a \in \CO$ and consider the following twist of $\chi(x)$
    \[\chi_1(x) = \chi(x-a).\]
    Denote by $X_1$ the variety representing the set of matrices with characteristic polynomial $\chi_1(x)$. There is a bijection
    \[X(\CO) \stackrel{\sim}{\lra} X_1(\CO), \quad A \mapsto A +aI_n\].
    In particular, 
    \[N(X,\CT) \sim N(X_1,T).\]
    
    Moreover, we have
    \[R_1 = \CO[x]/(\chi_1(x)) = \CO[x]/(\chi(x)) = R.\]
    In fact, all the terms in the right hand side of
    the equation for $C(\chi)$ in the main result (Theorem \ref{thm-main}) are invariant when we modify $\chi$ to
    $\chi_1$. 

    Therefore, the main result for $\chi$ and for $\chi_1$
    are equivalent. Finally, note that, if $\chi(0) = 0$, we may
    choose $a \in \CO$ such that $\chi_1(0) \not=0$.
    
\end{proof}

Let $\gamma \in X(\CO)$. By the above Lemma, we may assume 
$\det \gamma \not= 0$. 

As before, we denote by $G = \SL_n$, $\wt{G} = \GL_n$, 
$T$ the stabilizer of $\gamma$ in $G$ and $\wt{T}$ the stabilizer
of $\gamma$ in $\wt{G}$.

By Corollary \ref{cor:kappa}, each $\kappa \in H^1(F,T(\BA_{\bar{F}})/T(\bar{F}))^\vee$
	is in one-to-one correspondence with pairs $(E,\nu)$ where 
	\begin{itemize}
	\item $E$ is a field such that $F \subset E \subset K_i$ with $E/F$ a cyclic extension.
	\item $\nu$ is a faithful character on $\Gal(E/F)$.
	\end{itemize}
	Moreover, in this case, by Proposition \ref{ell-endo}, the endoscopic subgroup $H = \Res_{E/F}^1 \GL_m$ with $m[E:F] = n$.

	By Proposition \ref{arch} and Proposition \ref{non-arch}, all the subfields $E$ of $K$ contributes to $N(X,\CT)$ satisfying that
	$E/F$ is unramified everywhere (including the archimedean places). Moreover, for each $\kappa$ corresponding to $(E,\nu)$ with
	$E/F$ unramified everywhere, the $\kappa$-orbital integral
	\[
	\begin{aligned}
		\CO_\gamma^\kappa(f^\CT)  &=
		\CO_\gamma^{\kappa_\infty}(f_\infty^\CT)\CO_\gamma^{\kappa_\fin}(f_\fin) \\
		&\sim \frac{w_n \Vol(U_\infty)}{|\Delta_\chi|_\infty^{1/2}}
		[R_E:R]\delta(E)\CO_{\gamma_E}(1_{\wt{\fh}(\wh{\CO})}) \cdot \CT^{[F:\BQ] d} 
         \end{aligned}\]
	Now, we apply Theorem \ref{zeta-orb} to the order $R_E = \CO_E[\gamma_E] \cong \CO_E[\gamma]$ (with base field $E$)
	\[\CO_\gamma^\kappa(f^\CT)
		\sim \frac{w_n \Vol(U_\infty)[\CO_K:R]\delta(E)}{|\Delta_\chi|_\infty^{1/2}}
		\frac{ \zeta_{R_E}(s)}{ \zeta_K(s)}\Big|_{s=1} \cdot \CT^{[F:\BQ] d}.\]
	Note that for each $\kappa$ corresponding to $(E,\nu)$, the asymptotic behavior of $\CO_\gamma^\kappa(f^\CT)$
	depends only on $E$ and is independent of $\nu$. For each $E$, the cardinality of such $\nu$ is  $\phi([E:F])$. This gives
	\[N(X,\CT) \sim  \frac{v_\chi w_n \Vol(U_\infty) C_0^\Tam}{|\Delta_\chi|_\infty^{1/2}} 
	\sum_E \phi([E:F])[\CO_K:R]\delta(E)\frac{\zeta_{R_E}(s)}{\zeta_K(s)}\Big|_{s=1}
         \cdot \CT^{[F:\BQ] d}(\log \CT)^r.\]
	 Note that for any finite place $v$ (see Lemma \ref{lem-Delta})
	\[ |\Delta_{K/F}|_v^{1/2} = |\Delta_\chi|_v^{1/2} [\CO_{K_v}:R_v].\] Applying Proposition \ref{meas-comp}, we obtain that
	\[\lim_{\CT \ra +\infty} \frac{N(X,\CT)}{\CT^{[F:\BQ] d}(\log \CT)^r}\]
	is equal to
	\[\frac{1}{|\Delta_F|^{d}} \cdot \frac{\lim_{s\ra 1}(s-1)^{r+1} \zeta_K(s)}{\Res_{s=1}\zeta_F(s) \prod_{i=2}^n \zeta_F(i)}
	\cdot \frac{1}{|\Delta_\chi|_\fin^{1/2}[\CO_K:R]}\]
	times
	\[\frac{v_\chi w_n \Vol(U_\infty)}{|\Delta_\chi|_\infty^{1/2}} 
	\sum_E \phi([E:F])[\CO_K:R]\delta(E) \frac{\zeta_{R_E}(s)}{\zeta_K(s)}\Big|_{s=1}.\]
	We obtain 
        \[N(X,\CT) \sim \frac{v_\chi w_n \Vol(U_\infty)}{|\Delta_F|^{d}\Res_{s=1} \zeta_F(s) \prod_{i=2}^n \zeta_F(i)} 
	\sum_E \phi([E:F])\delta(E) \lim_{s\ra 1} (s-1)^{r+1}\zeta_{R_E}(s)
         \cdot \CT^{[F:\BQ] d}(\log \CT)^r.\]
	Now by Proposition \ref{prop-vol-arch}, 
	\[\Vol(U_\infty) = \frac{1}{\pi^{nr_2} \prod_{i=1}^n \prod_{v|\infty} \Gamma_v(i)}.\]
	We obtain Theorem \ref{thm-main}.

	Now Theorem \ref{thm-main-Q} is an immediate corollary of Theorem \ref{thm-main}.

\appendix

\section{The proof of Proposition \ref{prop: c_T} (by Taiwang Deng and Runlin Zhang)}

The goal of this section is to give a proof of Proposition \ref{prop: c_T}. We will deduce \eqref{eq:key24} by following \cite[\S 11]{SZ19}.

By definition 
\[
   B( \gamma_0 ,\CT) = \prod_{v\mid \infty} B_v( \gamma_0 ,\CT) := 
    \prod_{v\mid \infty} \left\{ x\in \gamma_0 \cdot G(F_v) \;\middle\vert\;
    \norm{x}_v \leq \CT
    \right\}.
\]
We are going to show that, upon removing a negligible portion from $B_v( \gamma_0 ,\CT)$ for each $v$, 
we have a good estimate of $\norm{w_J \cdot g}_v$ and hence of $d_J(g)$. Each time we will focus on a single place $v$.
When $F_v$ is complex our argument is the same  as that
in \cite[Proposition 11.8]{SZ19}.
Hence we are left with the case 
when $F_v$ is real. In what follows, the subscript 
$v$ may be omitted when no confusion can arise.
In what follows for the unexplained notations, we refer the readers to Section \ref{sec: equi-distribution-refinement}.

\subsection{Diagonalization over $\R$}

Take $g_0 \in G(F_v)$ such that $x_0:= \gamma_0 \cdot g_0$ is block diagonal:
\[
x_0=\diag(d_1,\ldots, d_{r_1+r_2}),
\]
where $d_i\in F_v$ for $1\leq i\leq r_{1}$ and 
\[
d_{i}=\begin{pmatrix}
a_i& b_i\\
-b_i & a_i
\end{pmatrix}, \qquad r_1<i\leq r_1+r_2
\]
where $a_i+\sqrt{-1}b_i\in \C$ is an eigenvalue of $\gamma_0$.
Recall that $(e_1,...,e_n)$ denotes the standard basis of $F^n$.
Each $(V_j \otimes F_v)\cdot g_0$ is $F_v$-spanned by a subcollection of $e_i$'s.
We may and do further require that $g_0$ preserves the order, namely,
\[
   (V_0 \otimes F_v) g_0 = F_v e_1 \oplus ... \oplus F_v e_{n_0},\;
   \cdots,
   (V_r \otimes F_v) g_0 = F_v e_{n_0+...+n_{r-1}+1}  \oplus ... \oplus F_v  e_{n}.
\] 
Therefore, for $J\in \CJ$,  
\[
  \big( \bigoplus_{j\in J } V_j \otimes F_v \big )\cdot g_0 =  \bigoplus_{t=1}^l \mathbb{R} e_{i_t}
\]
with $(i_t)$ identical to that in Eq.(\ref{eq:degree}). Therefore, for some constant $c_J \neq 0$, we have
\[
    w_J \cdot g_0 = c_J \wedge_{t=1}^l e_{i_t} =: c_J w_{J,v}.
\]

In the discussions below, we will often decompose an $n$-by-$n$ matrix $M$ into $(r_1+r_2)^2$-many blocks, which is compatible with the conjugate action of $x_0$. To avoid possible confusion, we write $M_{pq}$ for such a block whereas $M_{ij}$ is reserved for the usual $(i,j)$-th entry of a matrix. For instance $(x_0)_{pq}=d_{r_1}$ if $p=q=r_1+1$, but $(x_0)_{ij}= a_i$ if $i=j=r_1+1$.

\subsection{Iwasawa-like decomposition}
To proceed, we record the Iwasawa-like decomposition $G(F_v)= S N_0 N U$ from \cite{Sha}. Here $U$ is just $\SO(n)$. It remains to introduce the other players.

Let $T_{x_0}$ be the stabilizer of $x_0$ in $G(F_v)$, then $S$ is its identity connected component 
\[
  S = \left\{
      \diag \left(s_1, \ldots, s_{r_1}, s_{r_1+1}\begin{pmatrix}\cos \theta_1& \sin \theta_1\\ -\sin\theta_1& \cos\theta_1 \end{pmatrix}, \ldots, s_{r_1+r_2} \begin{pmatrix}\cos \theta_{r_2}& \sin \theta_{r_2}\\ -\sin\theta_{r_2}& \cos\theta_{r_2} \end{pmatrix} \right) 
      \right\}
\]
where $ s_i>0,\, \prod_{i\le r_1}s_i\prod_{i>r_1}s_i^2=1$ and $\theta_i \in \R/ 2\pi \mathbb{Z}$.

For $t \geq 0$, let $h(t):=\begin{pmatrix}
1&\sqrt{t}\\
0& 1
\end{pmatrix}$. And
\[
N_0:=\left\{
     h(\mathbf t)= \diag (1,\ldots, 1, h(t_1), \ldots, h(t_{r_2})) \;\middle\vert\; \mathbf t=(t_i)\in \R_{\geq 0}^{r_2} 
\right\}.
\]

For $p,q=1,\ldots, r_1+r_2$, let
\[
N_{pq}=\begin{cases}
\R,  &\text{\,  if \, }  p \leq r_1, \, q\leq r_1\\
\RM_{1\times 2}(\R), &\text{\, if \, } p\leq r_1, \, q>r_1\\
\RM_{2\times 1}(\R), & \text{\, if \, } p > r_1, \, q\leq r_1\\
\RM_{2\times 2}(\R), &\text{\, if \, }  p> r_1, \, q>r_1
\end{cases}
\]
and
\[
   \CN=\prod_{1\leq p < q \leq r_1+r_2} N_{pq}\cong \R^{\frac{n(n-1)}{2}-r_2}.
\]
Now
$N:= \left\{  n(\mathbf {x} ) \,\middle\vert\, \mathbf{x}  \in \CN \right\} $ is parametrized by $\CN$. If $\mathbf x=(x_{pq})\in \CN $, then
\[
   n(\mathbf x) :=(n_{pq})\;
   \text{ with }\;
   n_{pq} :=\begin{cases}
0,  &\text{\, if \, }p>q\\
1,  &\text{\, if \, }p=q\\
x_{pq}, &\text{\, if \, } p<q
\end{cases}
\]
where $1$ is the identity matrix of size $2$ if $p=q>r_1$.

Let $\mu_S, \mu_N$ and $\mu_U$ be Haar measures on $S,N$ and $U$. Let  $\mu_{N_0}:=\mathrm{d} t_1...\mathrm{d} t_{r_2}$ be a measure on $N_0$. 

\begin{lem}\label{eqn: Iwasawa-decomposition}
Taking the product yields a surjection $\Phi^{\mathrm{Iwa}}:S\times N_0 \times N \times U \to G(F_v)$. 
 And for some constant $c>0$, 
  \[
  \Phi^{\mathrm{Iwa}}_* \left(  \mu_S \otimes \mu_{N_0} \otimes \mu_N\otimes \mu_U
  \right) = c\cdot \mu_{G(F_v)}.
  \]
\end{lem}

\subsection{The ball $B_v(\gamma_0,\CT)$ using new coordinates}\label{section:ball_in_t_x_coordinates}

Set 
\[
D_{\CT}=\{(\mathbf t, \mathbf x): \norm{ n(\mathbf x)^{-1} h(\mathbf t)^{-1}x_0 h(\mathbf t) n(\mathbf x) }_v\leq \CT\}.
\]
Define
\begin{equation*}
\begin{aligned}
        \Psi: \R_{\geq}^{r_2} \times \CN \times U &  \to \gamma_0 \cdot G(F_{v})
        \\
         (\mathbf{t}, \mathbf{x} , u) & \mapsto \gamma_0 \cdot g_0 h(\mathbf{t}) n(\mathbf{x}) u.
\end{aligned}
\end{equation*}
Then by Lemma \ref{eqn: Iwasawa-decomposition}, $\Psi_*(\mathrm{d} \mathbf{t} \otimes \mathrm{d} \mathbf{x} \otimes \mu_U) = c\cdot \mu_{X(F_v)}$ for some $c>0$. Moreover,
\[
   \Psi^{-1}(B_v(\gamma_0,\CT)) = D_{\CT} \times U.
\]

\subsection{Diagonization over $\mathbb{C}$}

Consider
 $P_0=\frac{1}{\sqrt 2}\begin{pmatrix} 1&1\\ \sqrt{-1}&-\sqrt{-1}\end{pmatrix}\in \GL_2(\C)$ and 
\[
P := \diag(\underbrace{1,\ldots, 1}_{r_1}, \underbrace{P_0,\ldots, P_0}_{r_2}).
\]
Since $P_0^{-1}d_i P_0=\diag(a_i+b_i \sqrt{-1}, a_i-b_i\sqrt{-1})$ for $i>r_1$, we have 
$y_0:= P^{-1}x_0P$ is a diagonal matrix.
Now
\begin{equation*}
  \begin{aligned}
      \omega  :=   n(\mathbf x)^{-1} h(\mathbf t)^{-1}Py_0P^{-1} h(\mathbf t) n(\mathbf x)
  \end{aligned}
\end{equation*}
is block-upper-triangular. We will make it truly upper triangular. 

\subsection{Iwasawa decomposition for $\SL_2(\mathbb{C})$}

Let $\mathbf{x}' \in \mathcal{N}(\mathbb{C})$  be such that 
\[
n(\mathbf{x}') = P^{-1} n(\mathbf{x} ) P.
\]
 And let 
\[
     h'(\mathbf t)=\diag(\underbrace{1,\ldots, 1}_{r_1}, \underbrace{h'(t_1),\ldots, h'(t_{r_2})}_{r_2})
\]
where
\[
    h'( t)=P^{-1}h( t)P=
    \begin{pmatrix}1+\dfrac{i \sqrt t}{2} & -\dfrac{i \sqrt t}{2}\\[4pt]\dfrac{i \sqrt t}{2} & 1-\dfrac{i \sqrt t}{2}\end{pmatrix}\in \SL_2(\C).
\]
By Iwasawa decomposition for $\SL_2(\C)$, write $h'( t)=s(t)h''( t)k(t)$ (for $t\in \R_{\geq 0}$) with
\[
s(t):=\begin{pmatrix}\dfrac{\sqrt2}{\sqrt{t+2}}&0\\[6pt]0&\dfrac{\sqrt{t+2}}{\sqrt2}\end{pmatrix},
\quad
h''( t):=\begin{pmatrix}1&\frac{t}{2}-i \sqrt t\\0&1\end{pmatrix}, 
\quad
k(t):=
\dfrac{\sqrt2}{\sqrt{t+2}}
\begin{pmatrix}
1+\dfrac{i \sqrt t}{2}&\dfrac{i \sqrt t}{2}\\[6pt]
\dfrac{i \sqrt t}{2}&1-\dfrac{i \sqrt t}{2}
\end{pmatrix}\in \SU(2).
\]
Let 
\begin{align*}
k(\mathbf t)& :=\diag(\underbrace{1,\ldots, 1}_{r_1}, \underbrace{k(t_1),\ldots, k(t_{r_2})}_{r_2})\in \SU(n),\\
 s(\mathbf t)& :=\diag(\underbrace{1,\ldots, 1}_{r_1}, \underbrace{s(t_1),\ldots, s(t_{r_2})}_{r_2}),\\
h''(\mathbf t)& :=\diag(\underbrace{1,\ldots, 1}_{r_1}, \underbrace{h''(t_1),\ldots, h''(t_{r_2})}_{r_2}),\\
n(\mathbf t, \mathbf x') & :=k(\mathbf t) n(\mathbf x')k(\mathbf t)^{-1}.
\end{align*}
Set 
\[
\omega'':=k(\mathbf t)P^{-1}\omega P k(\mathbf t)^{-1}
= n(\mathbf t, {\mathbf x'})^{-1}h''(\mathbf t)^{-1} y_0 h''(\mathbf t) n(\mathbf t, {\mathbf x'}).
\] 
Let us remark that $y_0 $ is diagonal, $h''(\mathbf t) n(\mathbf t, {\mathbf x'})$ is upper triangular unipotent and $\omega''$ is upper triangular and conjugate to $y_0$.

\subsection{Definition of $\mathcal{D}_{\CT}^{\varepsilon}$}
For $\varepsilon >0$, let
\[
D_{\CT}^\varepsilon:=\left\{
(\mathbf t, \mathbf x)\in D_{\CT} \;\middle\vert\; \|\omega''_{i, i+1}\|\geq\varepsilon \CT, \;\forall \, i=1,...,n-1
\right\}.
\]

\begin{prop}\label{prop:generaic_ball}
There exists $T_0\ge 1$ such that for every $\CT\ge T_0$ and every $0<\varepsilon\le 1$,
\[
\frac{\operatorname{vol}(D_\CT\setminus D_\CT^{\varepsilon})}{\operatorname{vol}(D_\CT)}\ll \varepsilon,
\]
where the implied constant depends only on $x_0$ and on the chosen norms. In particular,
\[
\liminf_{\CT\to\infty}\frac{\operatorname{vol}(D_\CT^{\varepsilon})}{\operatorname{vol}(D_\CT)}\ge 1-C\varepsilon.
\]
Consequently, if $\varepsilon=\varepsilon(\CT)\to 0$ as $\CT\to\infty$, then
\[
\lim_{\CT\to\infty}\frac{\operatorname{vol}(D_\CT^{\varepsilon(\CT)})}{\operatorname{vol}(D_\CT)}=1.
\]
\end{prop}

Let $h(\mathbf t)^{-1}x_0 h(\mathbf t)=(z_1,\ldots, z_{r_1+r_2})$ where 
\[
z_p=\begin{cases}
d_p, \text{\,  if \, } p\leq r_1\\
h(t_{p-r_1})^{-1}d_p h(t_{p-r_1}) \text{\, if \, }p>r_1.
\end{cases}
\]
Let $n(\mathbf x)^{-1}z(\mathbf t) n(\mathbf x)=(\omega_{pq})_{p,q=1}^{r_1+r_2}:=\omega$ with $\omega_{pq}\in N_{pq}$ such that
\[
\omega_{pq}=\begin{cases}
z_p, &\text{\ if \ } p=q\\
L_{pq}(x_{pq})+Q_{pq}((x_{k\ell})_{0<\ell-k<q-p}), & \text{\ if \ } p<q
\end{cases}
\]
where $L_{pq}: N_{pq}\rightarrow N_{pq} (p<q)$ is defined by 
$$L_{pq}(x)=z_p x-x z_q $$
 and
\[
Q_{pq}: \prod_{0<k-\ell<q-p}N_{k\ell}\longrightarrow N_{pq}
\]
is a polynomial map for $p<q-1$ and $Q_{pq}\equiv 0$ if $q-p=1$.

\begin{proof}
Let $M=r_1+r_2$. 
For $1\le p<q\le M$ consider the  linear map
\[
L_{pq}:N_{pq}\longrightarrow N_{pq},\qquad L_{pq}(u)=d_p u-u d_q.
\]
Because the spectra of $d_p$ and $d_q$ are disjoint, $L_{pq}$ is invertible. Also, if
\[
C_{pq,\mathbf t}:N_{pq}\longrightarrow N_{pq},\qquad C_{pq,t}(u)=h(t_p)^{-1}uh(t_q),
\]
then, since $\det h(t)=1$, one has $|\det C_{pq,\mathbf t}|=1$ and
\[
S_{pq,\mathbf t}=C_{pq,\mathbf t}^{-1}\circ L_{pq}\circ C_{pq,\mathbf t}.
\]
Hence
\[
|\det S_{pq,\mathbf t}|=|\det L_{pq}|,
\]
which is independent of $\mathbf t$.

Note that it follows from the definition
\begin{equation}\label{eqn: ball-unit}
D_{\CT}=\{(\mathbf t, \mathbf x):\sum_{p=1}^{r_1+r_2} \| z_p\|^2+\sum_{1\leq p<q\leq r_1+r_2}\|\omega_{pq}\|^2\leq \CT^2\}.
\end{equation}
For every fixed $\mathbf t$ the map
\[
\mathbf x\longmapsto \omega=(\omega_{pq})_{1\le p<q\le M}
\]
is a polynomial diffeomorphism, and its Jacobian is the constant
\[
J_0:=\prod_{1\le p<q\le M}|\det L_{pq}|^{-1}.
\]
Therefore the volume form $\mathrm{d}\mathbf t\,\mathrm{d}\mathbf x$ becomes $J_0\,\mathrm{d} t\,\mathrm{d}\omega$, where
\[
   \mathrm{d}\omega:=\prod_{1\le p<q\le M}\mathrm{d}\omega_{pq}
\]
is the Euclidean product measure on $\prod_{p<q}N_{pq}$.

Now  \eqref{eqn: ball-unit} shows that there exist constants $A_0\ge 0$ and $c_a>0$
($1\le a\le q$), depending only on $x_0$ and the chosen norms, such that under the coordinates $(\mathbf t,\omega)$,
\[
D_\CT\simeq \mathcal D_{\CT}:=\left\{(\mathbf t,\omega)\in \mathbb R_{\ge 0}^q\times \prod_{p<q}N_{pq}:
A_0+\sum_{a=1}^q c_a(t_a^2+4t_a)+\sum_{p<q}\|\omega_{pq}\|^2\le \CT^2\right\},
\]
and
\[
\operatorname{vol}(D_\CT)=J_0\,\operatorname{vol}(\mathcal D_\CT).
\]
Set
\[
N:=r_2+\sum_{1\le p<q\le M}\dim N_{pq}=\frac{n(n-1)}2.
\]

We first record the crude growth of $\operatorname{vol}(D_\CT)$. There exists $T_0\ge 1$ such that for all $\CT\ge T_0$,
\[
\operatorname{vol}(D_\CT)\asymp \CT^N.
\]
Indeed, the upper bound follows immediately from the inequalities $t_a\ll \CT$ and $\|\omega_{pq}\|\le \CT$ on $\mathcal D_\CT$. For the lower bound, choose $\kappa>0$ so small that
\[
\sum_{a=1}^q c_a(\kappa^2+4\kappa)+\kappa^2<\frac14.
\]
If $\CT\ge T_0$ is large enough so that $A_0\le \CT^2/4$, then the region
\[
0\le t_a\le \kappa \CT\quad (1\le a\le q),
\qquad
\sum_{p<q}\|\omega_{pq}\|^2\le \kappa^2\CT^2
\]
is contained in $\mathcal D_\CT$, which gives the required lower bound.

For $1\le i\le n-1$, define
\[
E_i(\CT,\varepsilon):=\{(\mathbf t,\mathbf x)\in D_\CT:\ \|\omega''_{i,i+1}\|<\varepsilon \CT\}.
\]
Then
\[
D_\CT\setminus D_\CT^{\varepsilon}=\bigcup_{i=1}^{n-1}E_i(\CT,\varepsilon).
\]
It is therefore enough to prove that, uniformly for $\CT\ge T_0$,
\[
\operatorname{vol}(E_i(\CT,\varepsilon))\ll \varepsilon \CT^N
\qquad (1\le i\le n-1).
\]
We distinguish two cases.

\smallskip
\noindent\emph{Case 1: $i=r_1+2a-1$ for some $1\le a\le r_2$.}
Then $i,i+1$ are the two coordinates belonging to the $a$-th complex $2\times 2$ block. Since $n(\mathbf x')$ has no off-diagonal entry inside a block and $k(\mathbf t)$ is block diagonal, the $(i,i+1)$-entry of $h''(\mathbf t)n(\mathbf t,\mathbf x')$ is exactly
\[
\eta_a(t_a):=\frac{t_a}{2}-i\sqrt{t_a}.
\]
Hence
\[
\omega''_{i,i+1}=(y_i-y_{i+1})\eta_a(t_a),
\]
so
\[
|\omega''_{i,i+1}|^2=|y_i-y_{i+1}|^2\left(\frac{t_a^2}{4}+t_a\right).
\]
Thus $|\omega''_{i,i+1}|<\varepsilon \CT$ implies $|\eta_a(t_a)|\ll \varepsilon \CT$, and therefore
\[
t_a=2\bigl(\sqrt{1+|\eta_a(t_a)|^2}-1\bigr)\le 2|\eta_a(t_a)|\ll \varepsilon \CT.
\]
So, after fixing all variables except $t_a$, the bad set in the $t_a$-direction has length $O(\varepsilon \CT)$. Since all remaining $r_2-1$ parameters $t_b$ range over intervals of length $O(\CT)$ and the $\omega$-variables lie in a Euclidean ball of radius $O(\CT)$, Fubini gives
\[
\operatorname{vol}(E_i(\CT,\varepsilon))\ll (\varepsilon \CT)\,\CT^{r_2-1}\,\CT^{N-r_2}=\varepsilon \CT^N.
\]

\smallskip
\noindent\emph{Case 2: $i\notin\{r_1+1,r_1+3,\dots,r_1+2r_2-1\}$.}
Then $i$ and $i+1$ lie in two consecutive blocks. Let these block indices be $p=\delta(i)$ and $q=\delta(i+1)=p+1$. Because $P$ and $k(\mathbf t)$ are block diagonal and unitary on each block, the scalar entry $\omega''_{i,i+1}$ is obtained from the block $\omega_{pq}\in N_{pq}$ by first applying an isometry of Euclidean spaces and then taking one real or complex coordinate. Equivalently, there exist $\rho_i\in\{1,2\}$ and a surjective linear map
\[
\ell_{i,t}:N_{pq}\longrightarrow \mathbb R^{\rho_i},\qquad \|\ell_{i,t}\|=1,
\]
such that
\[
\|\omega''_{i,i+1}\|=\|\ell_{i,t}(\omega_{pq})\|.
\]
Fix all variables except $\omega_{pq}$. Then $\omega_{pq}$ ranges inside a Euclidean ball $B_{pq}(R)\subset N_{pq}$ with $R\le \CT$. After an orthogonal change of coordinates on $N_{pq}$, the condition $\|\ell_{i,t}(\omega_{pq})\|<\varepsilon \CT$ becomes the condition that the first $\rho_i$ coordinates lie in a ball of radius $\varepsilon \CT$. Hence
\[
\operatorname{vol}_{N_{pq}}\{u\in B_{pq}(R):\ \|\ell_{i,t}(u)\|<\varepsilon \CT\}
\ll (\varepsilon \CT)^{\rho_i}R^{\dim N_{pq}-\rho_i}
\le \varepsilon \CT^{\dim N_{pq}},
\]
because $R\le T$, $\rho_i\in\{1,2\}$, and $0<\varepsilon\le 1$. Integrating over the remaining variables again yields
\[
\operatorname{vol}(E_i(\CT,\varepsilon))\ll \varepsilon \CT^N.
\]

Combining the two cases and summing over $i=1,\dots,n-1$, we obtain
\[
\operatorname{vol}(D_\CT\setminus D_\CT^{\varepsilon})
\le \sum_{i=1}^{n-1}\operatorname{vol}(E_i(\CT,\varepsilon))
\ll \varepsilon \CT^N
\ll \varepsilon\,\operatorname{vol}(D_\CT),
\]
for all $\CT\ge T_0$. This proves the first claim.

The remaining assertions are immediate:
\[
\frac{\operatorname{vol}(D_\CT^{\varepsilon})}{\operatorname{vol}(D_\CT)}
=1-\frac{\operatorname{vol}(D_\CT\setminus D_\CT^{\varepsilon})}{\operatorname{vol}(D_\CT)}
\ge 1-C\varepsilon,
\]
so
\[
\liminf_{\CT\to\infty}\frac{\operatorname{vol}(D_\CT^{\varepsilon})}{\operatorname{vol}(D_\CT)}\ge 1-C\varepsilon.
\]
Finally, if $\varepsilon=\varepsilon(\CT)\to 0$, then
\[
0\le 1-\frac{\operatorname{vol}(D_\CT^{\varepsilon(\CT)})}{\operatorname{vol}(D_\CT)}\le C\varepsilon(\CT)\xrightarrow[\CT\to\infty]{}0,
\]
and therefore
\[
\lim_{\CT\to\infty}\frac{\operatorname{vol}(D_\CT^{\varepsilon(\CT)})}{\operatorname{vol}(D_\CT)}=1.
\]
This completes the proof.
\end{proof}

\subsection{Estimate of $\norm{ w_{J} \cdot g_0 h(\mathbf{t}) n(\mathbf{x})}$}
\begin{prop}\label{prop: top-degree-term}
Let $J\in \CJ$ and recall $w_{J,v}=\wedge_{t=1}^l e_{i_t}$. We have for some  constant $\normm{ c_{J,v}}=1$,
\[
    w_{J,v} \cdot h(\mathbf t)n(\mathbf x) k(\mathbf{t}) ^{-1}
    = c_{J,v} c(i_1,\ldots, i_\ell)\prod_{j=1}^{\ell}\prod_{p=j}^{i_j-1}\omega''_{p, p+1}(e_1\wedge e_2\wedge\cdots e_\ell ) +  \ldots
\]
where $c(i_1,\ldots, i_\ell)$ is the constant in \cite[Lemma 11.4]{SZ19} and we omit the terms of polynomials in variables $\omega''_{p,q}(p<q)$ of degrees 
smaller than $\deg J= \sum_{t =1}^\ell (i_t  - t)$.
\end{prop}

\begin{proof}
It follows from \cite[Proposition 11.5]{SZ19} that  
\[
w_{J,v} \cdot h''(\mathbf t)n(\mathbf t, \mathbf x')=c(i_1,\ldots, i_\ell)\prod_{j=1}^{\ell}\prod_{p=j}^{i_j-1}\omega''_{p, p+1}(e_1\wedge e_2\wedge\cdots e_\ell )+\ldots
\]
where $c(i_1,\ldots, i_\ell)$ is the constant in \cite[Lemma 11.4]{SZ19} and we omit the terms of polynomials in variables $\omega''_{p,q}(p<q)$ of degrees 
smaller than $\sum_{j=1}^\ell (i_j-j)$. 
Note that $w_{J,v}$ is fixed by $s(\mathbf{t})$ and $w_{J,v}\cdot P^{-1} = c^{-1}_{J,v} w_{J,v}$ with $\normm{c_{J,v}}_v=1$. 
Therefore, by unwrapping definitions, 
\begin{equation*}
   \begin{aligned}
          w_{J,v} \cdot h''(\mathbf{t} )  n(\mathbf{t}, \mathbf{x'}) 
          & =
          w_{J,v} \cdot \big( s(\mathbf{t}) h''(\mathbf{t}) k(\mathbf{t})  k(\mathbf{t}) ^{-1}  \big)
           \big( k(\mathbf{t}) P^{-1} n(\mathbf{x}) P  k(\mathbf{t})^{-1} \big)
           \\
           &=
           w_{J,v} \cdot P^{-1} h(\mathbf{t}) P P^{-1} n(\mathbf{x}) P  k(\mathbf{t})^{-1}
           \\
           &=
           c_{J,v} ^{-1}
            w_{J,v} \cdot h(\mathbf{t}) n (\mathbf{x}) k(\mathbf{t})^{-1}.
   \end{aligned}
\end{equation*}
\end{proof}

Note that $(\mathbf{t},\mathbf{x}) \in D_{\CT} \implies \normm{\omega''_{ij}}_v \leq T$ for every $(i,j)$. 
Also recall that $w_J \cdot g_0 = c_J w_{J,v}$.
The above proposition leads to the following corollary
\begin{cor}\label{cor: top-degree-term}
There exists $0<c_{\varepsilon}<1<C$ such that for every $\CT>1$ and $(\mathbf{t},\mathbf{x}) \in D_{\CT}$,
\[
    \norm{   w_{J} \cdot g_0 h(\mathbf{t}) n(\mathbf{x}) 
    }_v \leq C \cdot \CT^{\deg J}.
\]
And if additionally $(\mathbf{t},\mathbf{x}) \in D_{\CT}^{\varepsilon}$, then
\[
   \norm{   w_{J} \cdot g_0 h(\mathbf{t}) n(\mathbf{x}) 
    }_v \geq c_{\varepsilon} \cdot \CT^{\deg J}.
\]
\end{cor}
The same corollary also holds for a complex place $v$, replacing $\CT^{\deg J}$ by $\CT^{2\deg J}$, and the proof is omitted as it is easier and essentially contained in \cite{SZ19}.

\subsection{Proof of  Proposition \ref{prop: c_T} }
\begin{proof}
By Lemma \ref{lem: modified-def-Omega}, we have 
\[
   \Omega_{g,\varepsilon}\cong 
   \Big\{
  \mathbf{y} \in H \;\Big \vert\;
    \sum_{j\in J}y_j\geq \log(\varepsilon)-  \sum_{v\mid \infty } \log(\| w_J\cdot g_v\|_v), \;\forall\, J \in \CJ
    \Big\}.
\]
Let $D_{\CT}:= \prod_{v\mid \infty} D_{\CT,v}$ and write $\mathbf{t}= \prod_{v\mid \infty} \mathbf{t}_v$, $\mathbf{x}= \prod_{v\mid \infty} \mathbf{x}_v$.
By Section \ref{section:ball_in_t_x_coordinates}, Equation (\ref{eq:key24}) is equivalent to
\begin{equation}\label{eq:key24again}
     \int_{ (\mathbf{t}, \mathbf{x}) \in D_{\CT} } 
     \mu( \Omega_{g_0 h(\mathbf{t}) n(\mathbf{x})  ,\varepsilon}) \, \mathrm{d}\mathbf{t} \,\mathrm{d}\mathbf{x}
     \sim
      v_{\chi}(\log \CT)^r \int_{D_{\CT}}  \mathrm{d}\mathbf{t} \,\mathrm{d}\mathbf{x}.
\end{equation}
For simplicity we write $L_{\CT}$ and $R_{\CT}$ for them.

By Corollary \ref{cor: top-degree-term}, for $( \mathbf{t}, \mathbf{x} ) \in D_{\CT}$,
\begin{equation*}
\begin{aligned}
           \mu(  \Omega_{g_0 h(\mathbf{t}) n(\mathbf{x})  ,\varepsilon} )
           &\leq \mu
           \Big(
           \Big\{
               \mathbf{y} \in H \;\Big \vert \;
               \sum_{j\in J} y_{j} \geq \log \varepsilon - \log (C \CT^{\deg(F/\Q) \deg J })
           \Big\}
           \Big)
           \\
          & = (\log \CT)^r \mu \Big(
           \Big\{
                \mathbf{y} \in H \;  \Big \vert \;
               \sum_{j\in J} y_{j} \geq \frac{\log \varepsilon - \log C}{\log \CT} - {\deg(F/\Q) \deg J }
            \Big\}
           \Big)
           \\
           &=  (\log \CT)^r v_{\chi} + o(1).
\end{aligned}
\end{equation*}
Similarly, for $\eta>0$ and $( \mathbf{t}, \mathbf{x} ) \in D_{\CT}^{\eta}$,
\[
   \mu(  \Omega_{g_0 h(\mathbf{t}) n(\mathbf{x})  ,\varepsilon} )
   \geq  (\log \CT)^r v_{\chi} + o_{\eta}(1)
\]
Invoking Proposition \ref{prop:generaic_ball}, one has 
\[
   1- C\eta  \leq 
   \liminf_{\CT \to \infty}  \frac{L_{\CT}}{R_{\CT}} \leq  \limsup_{\CT \to \infty} \frac{L_{\CT}}{R_{\CT}} \leq  1.
\]
Letting $\eta \to 0$ completes the proof.
\end{proof}

\bibliographystyle{plain}
\bibliography{biblio}

\end{document}